%% daj-template.tex v0.33     23 Sep 2016   Alex Russell/Laszlo Babai
%%
%% AUTHOR: Fill in fields (or see warnings) below marked with "AUTHOR"
%% ** Add as few macro / package definitions as possible
%% ** Compile with "pdflatex"; make sure that
%%           daj.cls and tocbase.cls are in the same directory.
%%
%% EDITOR: Fill in fields below marked with "EDITOR"
%%    and check that authors properly filled in field marked with "AUTHOR"

\documentclass{daj}

\usepackage{amsmath,amsthm,amsfonts,amssymb,times}

\usepackage{amssymb}
\usepackage{amscd}
\usepackage{amsfonts}
\usepackage{version}
\usepackage{endnotes}
\usepackage{float}

\let\footnote=\endnote

%     If your article includes graphics, uncomment this command.
\usepackage{graphicx}

%     If the article includes commutative diagrams, ...
%\usepackage[cmtip,all]{xy}

\newtheorem{theorem}{Theorem}[section]
\newtheorem{lemma}[theorem]{Lemma}
\newtheorem{proposition}[theorem]{Proposition}
\newtheorem{corollary}[theorem]{Corollary}

\newtheorem*{ramsey-lemma}{Proposition \ref{ramsey-prop}}
\newtheorem*{counting-lem-repeat}{Proposition \ref{counting-lem}}
\newtheorem*{distribution-repeat}{Proposition \ref{distribution}}

\theoremstyle{definition}

\newtheorem{question}{Question}
\newtheorem{definition}{Definition}[section]

\renewcommand{\leq}{\leqslant}
\renewcommand{\geq}{\geqslant}

\newcommand\QM{\operatorname{QM}}

\newcommand\id{\operatorname{id}}

\newcommand\trig{\operatorname{trig}}
\def\F{\mathbb{F}}
\def\R{\mathbb{R}}
\def\C{\mathbb{C}}
\def\Z{\mathbb{Z}}
\def\E{\mathbb{E}}

\def\Q{\mathbb{Q}}

\def\G{\mathbb{G}}
\def\K{\mathbb{K}}

\def\N{\mathbb{N}}

\def\eps{\varepsilon}

\textheight=8in
\textwidth=6.5in
\oddsidemargin=0pt
\evensidemargin=0pt
\hoffset=0in

\parskip 1mm

\numberwithin{equation}{section}

%%%%%%%%%%%%%%%%%%%%%%%%%%%%%%%%%%%%%%%%%%%%%%%%
%% AUTHOR: Fill in meta-data below:
\dajAUTHORdetails{%
  title = {Monochromatic sums and products}, %% please capitalize all significant words
  author = {Ben Green and Tom Sanders},
    %% Please use the format for commas as follows:
    %% "A", or "A and B", or "A, B, and C", or "A, B, C, and D", etc.
  plaintextauthor = {Ben Green and Tom Sanders},
    %% An author list in plain text: Use the format
    %% "A", or "A, B", or "A, B, C", etc.
    %% NOTE: No LaTeX code in author names.
    %% NOTE: No "and" at the end--simply comma separated,
    % 
 %% The remaing lines in this section are optional:
    %
    %% IF YOUR TITLE CONTAINS MATH OR LATEX such as accented characters: 
    %% Add a "plain text title";  otherwise comment out the next line:
  plaintexttitle = {Monochromatic sums and products}, %%  title without math or LaTeX
    %
    %% ONLY IF YOUR TITLE IS TOO LONG to fit in the page headers, please 
    %% add an abbreviated version of the title, otherwise comment it out:
  %%runningtitle = {R\"odl's $n^{\log\log n}$ Bound}, 
    %
    %% ONLY IF YOUR AUTHOR LIST IS TOO LONG to fit in the page headers, 
    %% add an abbreviated version, otherwise comment it out:
  %%runningauthor = {Paul Erd\H{o}s, Johan H{\aa}stad, L\'aszl\'o Lov\'asz, and Andrew C-C. Yao},
    %% you can replace first names and/or middle names with initials.
    %
    %% ONLY IF YOUR AUTHOR LIST IS TOO LONG to fit the copyright entry
    %% on the bottom of the front page,
    %% add an abbreviated version, otherwise comment it out:
  %%copyrightauthor = {P. Erd\H{o}s, J. H{\aa}stad, L. Lov\'asz, and A. C-C. Yao},
    %% Note that the copyrightauthor  field will seldom be necessary;
    %% for instance, in this example with four authors, it would be 
    %% all right to comment it out and have all authors' full names 
    %% appear on the Copyright line
   %
   %% Include keywords of your choice: comma separated, lower case;
   %% comment out the "keywords" line if you don't wish to provide them
  keywords = {arithmetic ramsey, sums, products, monochromatic},
}   %%% END \dajAUTHORdetails

%%%%%%%%%%%%%%%%%%%%%%%%%%%%%%%%%%%%%%%%%%%%%%%%
%%% EDITOR: please fill in the following data:
\dajEDITORdetails{%
   year={2016},
 %  volume={XX},
   number={5},
   received={30 October 2015},   % received date: example: 7 January 2017
 %  revised={XX Month 20XX},    % Optional revised date (you may comment it out)
   published={28 February 2016},  % published date
   doi={10.19086/da.613},       % XXX = number of paper, e.g. da006 for paper#6
%                              % or  da0006 (length of string arbitrary)
}   %%% END \dajEDITORdetails

\begin{document}

\begin{frontmatter}[classification=text]
%% EDITOR: this will force the keywords to appear right after the Abstract.
%%   If the abstract is too long and would force the keywords off the
%%   front page, please comment out % [classification=text] above
%%   This way the keywords will be floated on the bottom of the first page
%%   even though the Abstract spills over to the next page.

%%% AUTHOR: Title goes here.  This line is optional.  You must use it
%%   if title has footnote attached or requires nontrivial typesetting,
%%   e.g., inclusion of linebreaks to force nice layout.
\title{Monochromatic sums and products} %% please capitalize all significant words

%%% AUTHOR:
%%% List all authors. If you wish, place grant acknowledgements in \thanks.
%%% In brackets include a short tag for each author.
\author[bg]{Ben Green\thanks{Supported by ERC Starting Grant 279438 \emph{Approximate algebraic structure and applications}, and by a Simons Investigator Grant.}}
\author[ts]{Tom Sanders}

%%% AUTHOR: Abstract goes here
\begin{abstract}
Suppose that $\F_p$ is coloured with $r$ colours. Then there is some colour class containing at least $c_r p^2$ quadruples of the form $(x, y , x + y, xy)$.
\end{abstract}
\end{frontmatter}

%%% AUTHOR: body of paper starts here
\section{Introduction and notation}

The following beautiful question was asked on numerous occasions by Hindman (see, for example, \cite[Question 3]{hindmanleaderstrauss}) and is very well-known.

\begin{question}\label{q1}
Suppose that the natural numbers $\N$ are finitely coloured. Do there exist $x$ and $y$ such that $x, y, x+y$ and $xy$ all have the same colour? 
\end{question}

It follows from Schur's theorem \cite[Hilfssatz]{schur} that the answer is affirmative if either $x+y$ or $xy$ is omitted from the list.  (In the latter case it is an observation of Graham \cite[p1]{hindman} that we can consider the colouring of $\N$ induced on $\{2^n: n \in \N\}$.) It is also known that the answer to Question \ref{q1} is affirmative for 2-colourings; in fact, every 2-colouring of $\{1,2,\dots, 252\}$ contains a monochromatic quadruple of the stated type, as established by Graham \cite[Theorem 4.3]{hindman}. In general, however, Question \ref{q1} is quite open, as indeed is the following considerably weaker statement.  (See the remarks following \cite[Question 3]{hindmanleaderstrauss}.)

\begin{question}\label{q2}
Suppose that the natural numbers $\N$ are finitely coloured. Do there exist $x$ and $y$, not both $2$, such that $x+y$ and $xy$ all have the same colour?
\end{question}

In this paper we are concerned with so-called finite field analogues of the above questions where we replace $\N$ by the finite fields $\F_p$. Schur's theorem was originally designed for application to $\F_p^*$ so it should be of little surprise that its finite field analogue is routine.  Shkredov seems to have been the first to address the finite field analogue of Question \ref{q2} in \cite[Theorem 1.2]{shkredov} (later generalised by Cilleruelo to any finite field \cite[Corollary 4.2]{cilleruelo}) and in fact he proves rather more, namely the following \cite[Theorem 1.2]{shkredov}.

\begin{theorem}[Shkredov]\label{shkredov}
Suppose that $A\subset \F_p$ has size $\alpha p$. Then there are at least $c_\alpha p^2$ triples $(x,x+y,xy)$ in $A^3$, for some $c_{\alpha} > 0$ which does not depend on $p$.
\end{theorem}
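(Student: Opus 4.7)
The plan is to attack the estimate by Fourier analysis on $\F_p$. After the linear substitution $u = x + y$, the number of triples equals
\[
T = \sum_{x, u \in \F_p} 1_A(x)\, 1_A(u)\, 1_A(x(u-x)),
\]
and we want $T \ge c_\alpha p^2$. Write $1_A = \alpha + f$ with $f$ of mean zero, and expand $T$ into $2^3 = 8$ terms. The constant contribution is the main term $\alpha^3 p^2$; the remaining seven are error terms of varying degrees in $f$.

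\emph{Low-order error terms.} All terms containing exactly one or two copies of $f$ should contribute at most $O_\alpha(p)$. The key observation is that for $x \ne 0$ the map $u \mapsto x(u-x)$ is a bijection of $\F_p$, and hence $\sum_u f(x(u-x)) = 0$; this kills most cross terms immediately. The one genuinely quadratic cross-term $\sum_{x,u} f(u) f(x(u-x))$ requires a little more care: using the identity $x(u-x) = u^2/4 - (x - u/2)^2$, the inner sum in $x$ rewrites as $\sum_t \chi(u^2/4 - t) f(t)$, where $\chi$ is the Legendre symbol, and this mixed sum is $O(\sqrt{p}\,\|f\|_2)$ by standard Weil-type character-sum estimates.

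\emph{The cubic term.} The heart of the matter is
\[
\Omega = \sum_{x, u \in \F_p} f(x)\, f(u)\, f(x(u-x)).
\]
Fourier-expanding the last factor (with $\hat f(\xi) = \sum_y f(y) e^{-2\pi i \xi y / p}$) and summing out $u$ gives
\[
\Omega \;=\; \frac{1}{p} \sum_{r \ne 0} \hat f(r) \sum_{x \ne 0} f(x)\,\hat f(-rx)\, e^{-2\pi i r x^2 / p}.
\]
Now apply Cauchy--Schwarz (first in $r$, then in $x$), squaring out the inner sum so that a quadratic phase $e^{2\pi i (r' - r) x^2 / p}$ appears. The Weil bound $|\sum_x e^{2\pi i (ax^2 + bx)/p}| = O(\sqrt{p})$ on quadratic Gauss sums then yields a power-saving estimate of the shape $|\Omega| = O_\alpha(p^{2 - \eta})$ for some absolute $\eta > 0$. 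Combined with the easy estimates for the lower-order terms, this gives $T = \alpha^3 p^2 + O_\alpha(p^{2-\eta}) \ge \tfrac12 \alpha^3 p^2$ once $p \ge p_0(\alpha)$. For $p < p_0(\alpha)$ the conclusion is forced to hold vacuously by shrinking $c_\alpha$.

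\emph{Main obstacle.} The hard part is $\Omega$. A naive $\ell^1$ bound $|\Omega| \le \frac{1}{p}(\sum_r |\hat f(r)|)^3$ only gives $O(p^{5/2})$, which is worse than the main term $\alpha^3 p^2$; some actual cancellation from the quadratic phase $e^{-2\pi i r x^2 / p}$ is \emph{essential}. The subtle point is to arrange the Cauchy--Schwarz steps so that the Weil/Gauss bound enters at the right moment, and to track the resulting dependence on $\alpha$ carefully enough that the power saving beats $\alpha^3$ rather than merely $1$.
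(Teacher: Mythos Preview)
The paper does not prove this theorem; it is quoted from Shkredov and only the remark is made that his argument gives $c_\alpha p^2$ triples rather than one. So there is no proof in the paper to compare against, and your plan must stand on its own.

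The outline is reasonable up through the quadratic cross-term (though that term is $O_\alpha(p^{3/2})$, not $O_\alpha(p)$: your bound $|\sum_t \chi(u^2/4-t)f(t)| \le \sqrt{p}\,\|f\|_2$ is just Cauchy--Schwarz and gives, after summing in $u$, a $p^{3/2}$-sized contribution — still negligible).

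The genuine gap is in the cubic term $\Omega$. Your mechanism is: Cauchy--Schwarz to produce $r,r'$, then the quadratic Gauss sum bound on $\sum_x e_p((r'-r)x^2 + \cdots)$. But after squaring out, the $x$-sum is not a bare Gauss sum: it still carries the factors $\hat f(-rx)\,\overline{\hat f(-r'x)}$, in which $r,r'$ and $x$ are coupled \emph{multiplicatively}. If you expand these additively as $\sum_{a,b} f(a)\overline{f(b)}\,e_p((ra-r'b)x)$, the Gauss sum does give $\sqrt p$ per $(a,b,r,r')$, but the outer $\ell^1$-sums over $a,b$ and $r,r'$ cost you $(\sum|f|)^2(\sum|\hat f|)^2 \approx p^2\cdot p^3$, and the net bound on $\Omega$ is worse than $p^2$. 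I could not find any ordering of the Cauchy--Schwarz steps that avoids this loss while using only the additive Gauss bound; the quadratic phase buys $\sqrt p$, but the multiplicative coupling in $\hat f(-rx)$ costs more than that.

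What is missing is multiplicative Fourier analysis. The point is that $1_A(xy)$, not $1_A(x+y)$, should be expanded: writing $1_A(xy)=\frac{1}{p-1}\sum_\chi \widetilde{1_A}(\chi)\chi(x)\chi(y)$ genuinely separates $x$ and $y$, and the error terms then reduce to sums of the shape $\sum_x \chi(x-a)\overline{\chi}(x-b)$ or $\sum_x e_p(\xi x)\chi(x)$, which are controlled by the Weil bound for character sums over curves (this is exactly the input the paper records as Theorem~\ref{char-theorem} and Proposition~\ref{add-mult}). That is a deeper estimate than the classical Gauss sum, and it is the ingredient Shkredov uses; with only $|\sum_x e_p(ax^2+bx)| \le \sqrt p$ the argument does not close.
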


(Technically, Shkredov only states that there is at least one such triple, not $\geq c_{\alpha} p^2$, but it is obvious that his proof gives this stronger statement.) It follows trivially that if $\F_p$ is $r$-coloured and $p$ is sufficiently large in terms of $r$ then there are non-zero elements $x$ and $y$ such that $x,x+y$ and $xy$ all have the same colour.  A corresponding result for colourings over $\Q$ (or any countable field) was obtained by Bergelson and Moreira \cite[Theorem 1.2]{bergelson-moreira} using ergodic-theoretic techniques.

In this paper we solve the finite field analogue of Question \ref{q1} by establishing the following.
\begin{theorem}\label{mainthm}
Suppose that $\F_p$ is $r$-coloured. Then there are at least $c_r p^2$ monochromatic quadruples $(x,y,x+y,xy)$, where $c_r > 0$ does not depend on $p$.
\end{theorem}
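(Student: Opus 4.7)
The plan is to reduce matters to Shkredov's Theorem \ref{shkredov} and then argue that the fourth coordinate $y$ can be forced into the same colour class. Partition $\F_p = A_1 \cup \dots \cup A_r$ according to the colouring; pigeonhole gives some $A = A_i$ with $|A| \geq p/r$, and Theorem \ref{shkredov} applied to $A$ produces at least $c_r p^2$ pairs $(x,y) \in \F_p^2$ with $x, x+y, xy$ all lying in $A$. The entire additional difficulty is to show that a positive proportion of these pairs also satisfy $y \in A$.

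The natural route is to prove a ``distribution'' statement: for every dense $A \subset \F_p$ of density $\alpha$ and every $B \subset \F_p$ of density $\beta$, the number of pairs with $x, x+y, xy \in A$ and $y \in B$ is at least $c_\alpha \beta p^2$. Applied with $B = A$ this implies Theorem \ref{mainthm} at once. I would try to prove such a statement by expanding $1_B(y)$ via the additive Fourier transform over $\F_p$ and controlling the non-principal characters in the spirit of Shkredov's original Fourier-analytic argument, now with an additional additive twist in $y$; the $xy$ factor will force mixed additive/multiplicative character sums into the analysis.

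The main obstacle is that Shkredov's count has $y$ as a free summation variable, so pinning down its distribution amounts to a genuinely new four-variable count along the variety $\{(x,y,x+y,xy)\}$, linking an additive structure (in $x+y$) and a multiplicative one (in $xy$) simultaneously to the constraint on $y$. If a direct distribution estimate seems out of reach, I would fall back on an iterative Ramsey scheme: pigeonhole the colour of $y$ among the Shkredov pairs to obtain, for some other colour class $A' \neq A$, a dense bipartite set $S \subseteq A \times A'$ with $x+y, xy \in A$ for $(x,y) \in S$; then establish a counting lemma producing, from such bipartite data, either a monochromatic quadruple in one of the already-used colours or a strictly richer bipartite configuration on fewer colours, so that the iteration terminates after at most $r$ steps. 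Either way, the core technical input is a bilinear counting or distribution statement for the joint additive--multiplicative configuration, and that is where I expect the bulk of the work to lie.
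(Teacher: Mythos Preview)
Your central ``distribution'' statement is false, and this is exactly the obstruction the paper flags immediately after stating Theorem~\ref{mainthm}. Take $A = B = \{x \in \F_p : p/3 < x < 2p/3\}$, both of density roughly $1/3$. If $x, x+y \in A$ then $y = (x+y) - x$ lies in $(-p/3, p/3) \pmod p$, hence $y \notin B$. So the count of pairs with $x, x+y, xy \in A$ and $y \in B$ is zero, not $\gg_{\alpha,\beta} p^2$. More generally, the set of $y$ appearing among Shkredov's triples for a given $A$ can be concentrated on a set avoiding any particular colour class, so there is no hope of pigeonholing the fourth coordinate directly. This is why Theorem~\ref{mainthm} is a genuine colouring result with no density analogue, and why reducing to Shkredov plus an equidistribution-of-$y$ statement cannot work.

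Your fallback ``iterative Ramsey scheme'' is closer in spirit to what is actually needed, but as stated it is not a proof: you have not specified what structure survives the iteration or why it terminates productively. The paper's route is substantially more elaborate. The count $T(1_A,1_A,1_A,1_A)$ is controlled not by ordinary Fourier analysis but by a ``$\QM$-norm'' measuring correlation with products of quadratic phases and multiplicative characters (see Definition~\ref{def.qmnorm} and Proposition~\ref{gvn-nonlinear}); the fact that quadratic phases enter is essentially forced by the identity $(x+y)^2 = x^2 + 2xy + y^2$ linking the additive and multiplicative parts. One then proves a regularity lemma relative to this norm (Proposition~\ref{regularity}), a counting lemma transferring the quadruple count to a \emph{linear} configuration on a compact abelian group (Proposition~\ref{counting-lem}), and a separate Ramsey result for that linear configuration (Proposition~\ref{ramsey-prop}). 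The induction on $r$ happens inside a strengthened statement about almost-colourings of $\QM$-Bohr sets (Proposition~\ref{main-prop-refined}), not at the level of raw colour classes of $\F_p$.
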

Note that while Theorem \ref{shkredov} is a \emph{density result}, there is no such version of Theorem \ref{mainthm}, as can be seen by considering the set $\{ x \in \F_p : \frac{p}{3} < x < \frac{2p}{3}\}$. This has density roughly $\frac{1}{3}$ but does not even contain a set of the form $\{x, y, x+y\}$.

Before diving into the remains of the paper it may be useful to say that we discuss the outline of the main argument in \S\ref{sec4}, after setting up some notation in \S\ref{sec3}.  \S\ref{sec2} also includes some notation but more importantly develops the tools for counting the configurations we are interested in.  After that the remaining sections fill out the details of \S\ref{sec4}.

\emph{Notation.} We use fairly standard asymptotic notation such as $O()$, $o()$, $\gg$ and $\ll$. We write $o_{M; p \rightarrow \infty}(1)$ to mean a quantity tending to $0$ as $p \rightarrow \infty$, but in a manner that may depend on the parameter $M$. Similarly, for example, $O_{\eps}(1)$ denotes a constant which may depend on some parameter $\eps$. Throughout the paper $\F$ will denote the finite field with $p$ elements (we do not explicitly indicate the prime $p$). Occasionally we shall write $\mu_{\F}$ for the uniform probability measure on $\F$. As is standard in additive combinatorics we write $\E_{x \in X} = \frac{1}{|X|} \sum_{x \in X}$ for averages over some finite set $X$.

\section{Counting quadruples}\label{sec2}

Given functions $f_1,f_2,f_3,f_4:\F \rightarrow \C$ we write
\begin{equation*}
T(f_1,f_2,f_3,f_4):=\E_{x,y \in \F}{f_1(x)f_2(y)f_3(x+y)f_4(xy)},
\end{equation*}
so that if $A \subset \F$ then $p^2T(1_A,1_A,1_A,1_A)$ is the number of quadruples $(x,y,x+y,xy) \in A^4$.\vspace*{8pt}

\emph{The additive Fourier transform and counting sums.}  The quantity $p^2T(1_A,1_A,1_A,1_\F)$ is the number of triples $(x,y,x+y)\in A^3$ and this is well understood through the \emph{additive} Fourier transform. We write $\widehat{\F}$ for the dual group of the additive group of $\F$, and given $f:\F \rightarrow \C$ and $\gamma \in \widehat{\F}$ define the (additive) Fourier transform of $f$ by
\begin{equation*}
\widehat{f}(\gamma):=\E_{x \in \F}{f(x)\overline{\gamma(x)}}.
\end{equation*}
Writing $e_p(x):=\exp(2\pi i x/p)$, we know that the elements of $\widehat{\F}$ are just the maps of the form $x \mapsto e_p(rx)$ as $r$ ranges over $\F$ and so we shall frequently identify $\widehat{\F}$ with $\F$ and write $\widehat{f}(r)$ for $\widehat{f}(\gamma)$ where $\gamma(x)=e_p(rx)$ for all $x \in \F$. As usual the key tools are the inversion formula
\begin{equation}\label{eqn.ainv}
f(x) =\sum_{r \in \F}{\widehat{f}(r)e_p(rx)} \text{ for all } x \in \F,
\end{equation}
and Parseval's theorem
\begin{equation}\label{eqn.apars}
\|f\|_{2}^2=\E_{x \in \F}{|f(x)|^2} = \sum_{r \in \F}{|\widehat{f}(r)|^2}.
\end{equation}
The convolution of two functions $f,g:\F \rightarrow \C$ is defined to be
\begin{equation*}
f \ast g(y):=\E_x{f(x)g(y - x)} \text{ for all } y \in \F.
\end{equation*}
It is well-known and easy to prove that
\begin{equation*}
\widehat{f \ast g}(r) = \widehat{f}(r)\widehat{g}(r) \text{ for all }r \in \F.
\end{equation*}
Finally, define the $u^+_2$-norm of a function $f:\F \rightarrow \C$ by
\begin{equation*}
\|f\|_{u_2^+}:=\sup\{|\langle f,\gamma\rangle| : \gamma \in \widehat{\F}\}=\sup\{|\E_x{f(x)\overline{e_p(rx)}}|:r \in \F\}.
\end{equation*}
Usually, the $u_2^+$-norm is simply denoted $u_2$; we include the plus sign because we shall shortly need the multiplicative analogue. It is easy to see that $\|\cdot\|_{u_2^+}$ is a norm and that it is dominated by $\|\cdot \|_{1}$.  This norm is important because of the following lemma.
\begin{proposition}\label{prop.add}
Suppose that $f_1,f_2,f_3:\F \rightarrow \C$ are such that $\|f_1\|_{2},\|f_2\|_{2},\|f_3\|_{2}\leq 1$.  Then
\begin{equation*}
|T(f_1,f_2,f_3,1_{\F})| \leq \inf_i\|f_i\|_{u_2^+}.
\end{equation*}
\end{proposition}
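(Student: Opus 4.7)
The plan is to recognise $T(f_1,f_2,f_3,1_\F)$ as an inner product against a convolution and then expand using the additive Fourier transform. Explicitly, since $T(f_1,f_2,f_3,1_\F) = \E_{x,y}[f_1(x) f_2(y) f_3(x+y)]$, one change of variables $z = x+y$ gives
\begin{equation*}
T(f_1,f_2,f_3,1_\F) = \E_{z}\bigl[ f_3(z)\, (f_1 \ast f_2)(z)\bigr].
\end{equation*}
Substituting the Fourier inversion formula \eqref{eqn.ainv} for $f_3(z)$ and using that $\widehat{f_1 \ast f_2}(r) = \widehat{f_1}(r)\widehat{f_2}(r)$ collapses the average in $z$ against each character, leaving
\begin{equation*}
T(f_1,f_2,f_3,1_\F) = \sum_{r \in \F} \widehat{f_1}(r)\,\widehat{f_2}(r)\,\widehat{f_3}(-r).
\end{equation*}

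From this identity the bound follows quickly. To prove $|T| \leq \|f_1\|_{u_2^+}$, I would pull $|\widehat{f_1}(r)|$ out in the supremum norm (which is exactly $\|f_1\|_{u_2^+}$), so that
\begin{equation*}
|T(f_1,f_2,f_3,1_\F)| \leq \|f_1\|_{u_2^+} \sum_{r \in \F} |\widehat{f_2}(r)||\widehat{f_3}(-r)|,
\end{equation*}
and then apply Cauchy--Schwarz together with Parseval \eqref{eqn.apars} to see that the remaining sum is at most $\|f_2\|_2\|f_3\|_2 \leq 1$. The bounds $|T| \leq \|f_2\|_{u_2^+}$ and $|T| \leq \|f_3\|_{u_2^+}$ are obtained in exactly the same way, pulling out $\widehat{f_2}$ or $\widehat{f_3}$ instead and bounding the two remaining Fourier sums by Parseval. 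Taking the infimum over $i$ gives the claim.

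There is no real obstacle here: the only mild subtlety is the sign convention, namely that one must verify $\sup_r|\widehat{f_3}(-r)| = \|f_3\|_{u_2^+}$, which is immediate since the set of characters is closed under inversion.
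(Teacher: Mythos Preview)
Your proof is correct and follows essentially the same route as the paper: both obtain the Fourier identity $T(f_1,f_2,f_3,1_\F)=\sum_r \widehat{f_1}(r)\widehat{f_2}(r)\widehat{f_3}(-r)$ (the paper writes it as $\sum_r \widehat{f_3}(r)\widehat{f_1}(-r)\widehat{f_2}(-r)$, which is the same after $r\mapsto -r$) and then pull out one factor in sup-norm before applying Cauchy--Schwarz and Parseval. Your remark about the sign convention is exactly the small point that needs checking.
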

\begin{proof}
This is very standard. By the inversion formula \eqref{eqn.ainv} we have
\[ T(f_1, f_2, f_3, 1_{\F}) = \sum_r \hat{f}_3(r) \hat{f}_1(-r) \hat{f}_2(-r).\]
The stated inequality now follows from the Cauchy-Schwarz inequality and Parseval's theorem \eqref{eqn.apars}. For example,
\begin{align*}
|T&(f_1, f_2, f_3, 1_{\F})| \leq \sup_r |\hat{f}_3(r)| \left(\sum_r |\hat{f}_1(-r)| |\hat{f}_2(-r)|\right)  = \Vert f_3 \Vert_{u_2^+} \sum_r |\hat{f}_1(-r)| |\hat{f}_2(-r)| \\ & \leq \Vert f_3 \Vert_{u_2^+} \big( \sum_r |\hat{f}_1(-r)|^2 \big)^{1/2} \big( \sum_r |\hat{f}_2(-r)|^2 \big)^{1/2}  = \Vert f_3 \Vert_{u_2^+} \Vert f_1 \Vert_2 \Vert f_2 \Vert_2 \leq \Vert f_3 \Vert_{u_2^+},
\end{align*}
with almost identical proofs being available to bound the left hand side by $\|f_1\|_{u_2^+}$ and $\|f_2\|_{u_2^+}$.
\end{proof}

\emph{Multiplicative characters and counting products.} The quantity $p^2 T(1_A,1_A,1_\F,1_A)$ counts the number of triples $(x,y,xy) \in A^3$ and this is well understood through the \emph{multiplicative} Fourier transform. If $\chi : \F^* \rightarrow \C$ is a character, we extend $\chi$ to all of $\F$ by setting $\chi(0) = 1$. We write $\widehat{\F^*}$ for the set of all such extended characters and then define the $u_2^\times$-semi-norm of a function $f:\F \rightarrow \C$ to be
\begin{equation*}
\|f\|_{u_2^\times}:=\sup \{ |\langle f, \chi \rangle|: \chi \in \widehat{\F^*}\}
\end{equation*}
where $\langle f, \chi \rangle := \E_{x \in \F}{f(x)\overline{\chi(x)}}$.  Note that this is only a semi-norm because functions $f$ with $f(0)=-f(1)$ and $f(x)=0$ elsewhere have $\|f\|_{u_2^\times}=0$.

The analogue of Proposition \ref{prop.add} is then the following. 
\begin{proposition}\label{prop.mult}
Suppose that $g_1,g_2,g_4:\F \rightarrow \C$ are functions such that 
\begin{itemize}
\item $\Vert g_1 \Vert_{\infty}, \Vert g_2 \Vert_{\infty}, \Vert g_4 \Vert_{\infty} \leq K$ for some $K \geq 1$ and
\item $\Vert g_1 \Vert_2, \Vert g_2 \Vert_2, \Vert g_4 \Vert_2 \leq 1$.
\end{itemize}
Then
\begin{equation*}
|T(g_1,g_2,1_{\F},g_4)| \leq \inf_i{\|g_i\|_{u_2^\times}} + \frac{4K^3}{p}.
\end{equation*}
\end{proposition}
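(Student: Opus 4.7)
The plan is to mirror the proof of Proposition~\ref{prop.add}, using the multiplicative Fourier transform on $\F^*$ in place of the additive one. Two extra complications arise: first, multiplicative characters form an orthonormal basis only on $\F^*$, so the $x=0$ and $y=0$ contributions to $T$ must be split off; and second, the $u_2^\times$-norm averages over $\F$ with the convention $\chi(0)=1$, not over $\F^*$, so a small conversion is required. The $L^\infty$-bound $K$ and the $p$-dependent error $4K^3/p$ exist precisely to absorb these two discrepancies.

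First I would split
$$ T(g_1,g_2,1_\F,g_4) = \frac{(p-1)^2}{p^2}\tilde T + E, \qquad \tilde T := \E_{x,y \in \F^*} g_1(x)g_2(y)g_4(xy), $$
where $E$ collects the at most $2p-1$ pairs with $x=0$ or $y=0$; by the $L^\infty$-bound $|E| \leq (2p-1)K^3/p^2 \leq 2K^3/p$.

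Next, setting $c_i(\chi) := \frac{1}{p-1}\sum_{x \in \F^*} g_i(x)\overline{\chi(x)}$, orthogonality of characters on $\F^*$ together with the multiplicativity $\chi(xy)=\chi(x)\chi(y)$ for $x,y \in \F^*$ gives
$$ \tilde T = \sum_{\chi} c_4(\chi)\, c_1(\overline\chi)\, c_2(\overline\chi). $$
Cauchy--Schwarz in $\chi$ combined with Parseval on $\F^*$ (so that $\sum_\chi |c_i(\chi)|^2 = \E_{x \in \F^*}|g_i(x)|^2 \leq \frac{p}{p-1}\|g_i\|_2^2 \leq \frac{p}{p-1}$) then yields
$$ |\tilde T| \leq \frac{p}{p-1}\sup_\chi |c_4(\chi)|, $$
and the same bound holds with $c_1$ or $c_2$ in place of $c_4$, giving the infimum.

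Finally, I would convert $\sup_\chi |c_4(\chi)|$ into the $u_2^\times$-norm. Since $\chi(0)=1$, we have $\langle g_4,\chi\rangle = \frac{1}{p}\bigl(g_4(0) + \sum_{x \in \F^*} g_4(x)\overline{\chi(x)}\bigr)$, hence $c_4(\chi) = (p\langle g_4,\chi\rangle - g_4(0))/(p-1)$, and so $\sup_\chi |c_4(\chi)| \leq \frac{p}{p-1}\|g_4\|_{u_2^\times} + \frac{K}{p-1}$. Combining with the previous steps, and using $\|g_4\|_{u_2^\times} \leq \|g_4\|_\infty \leq K$ together with $K \geq 1$, a short arithmetic check collects all $O(1/p)$-type corrections into the stated $4K^3/p$. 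The main ``obstacle'' is therefore purely bookkeeping: the argument is structurally identical to the additive case, but the small errors coming from the $(p-1)/p$ conversions and from the $\chi(0)=1$ convention must be tracked so that they all fit under a single clean constant.
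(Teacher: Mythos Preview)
Your proposal is correct and follows essentially the same route as the paper: introduce $\tilde T$ over $\F^*\times\F^*$, bound the boundary contribution by $O(K^3/p)$, apply Cauchy--Schwarz and Parseval on $\widehat{\F^*}$ to get $|\tilde T|\leq\frac{p}{p-1}\sup_\chi|c_i(\chi)|$, and then convert the $\F^*$-average back to $\langle g_i,\chi\rangle$ via the $\chi(0)=1$ convention. The only cosmetic difference is that you collect the boundary terms in a single lump $|E|\leq 2K^3/p$, whereas the paper writes them out explicitly and bounds them by $3K^3/p$; either way the arithmetic closes comfortably under $4K^3/p$.
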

\begin{proof}
Introduce the auxiliary quantity 
\[ \tilde T(g_1, g_2,g_4) := \E_{x,y \in \F^*} g_1(x) g_2(y) g_4(xy).\]
By exactly the same analysis as in the proof of Proposition \ref{prop.add}, but using the (multiplicative) Fourier transform on $\F^*$ instead of the additive one, we obtain
\begin{equation}\label{mult-an} |\tilde T(g_1, g_2, g_4) | \leq  \frac{p}{p-1}\sup_{\chi} |\E_{x \in \F^*} g_i(x) \overline{\chi(x)}|\end{equation} for each $i \in\{ 1,2,4\}$. The extra factor of $p/(p-1)$ comes from the fact that
\begin{equation*}
\|g_j\|_{L_2(\mu_{\F^*})}^2\leq \frac{p}{p-1}\|g_j\|_{L_2(\mu_\F)}^2 \text{ for all } j \in \{1,2,4\}.
\end{equation*}
Now we have
\begin{align*}
T(g_1, g_2, 1_{\F}, g_4) & = -\frac{1}{p^2} g_1(0) g_2(0) g_4(0) + g_1(0) g_4(0) \frac{1}{p^2}\sum_y g_2(y)\\
& \qquad + g_2(0) g_4(0) \frac{1}{p^2}\sum_x g_1(x)+ \left(\frac{p-1}{p}\right)^2 \tilde T(g_1, g_2, g_4).\end{align*}
The first three terms may be estimated somewhat trivially using the bound $\Vert g_i \Vert_{\infty} \leq K$; in magnitude they total at most $\frac{3K^3}{p}$. From this and \eqref{mult-an} we obtain
\[ |T(g_1, g_2, 1_{\F}, g_4)| \leq \frac{3K^3}{p} +  \frac{p-1}{p} \sup_{\chi}|\E_{x \in \F^*} g_i(x) \overline{\chi}(x)|,\] for each $i = 1,2,4$. Noting that 
\[ \E_{x \in \F^*} g_i(x) \overline{\chi(x)} = \frac{p}{p-1}\langle g_i,\chi\rangle - \frac{1}{p-1} g_i(0),\] the result follows easily. 
\end{proof}
 
\emph{Counting sums and products.}\label{subsec.csp}
In the light of Proposition \ref{prop.add} and Proposition \ref{prop.mult} one might hope that $|T(f_1,f_2,f_3,f_4)|$ is controlled by the combination of $\|f_i\|_{u_2^+}$ and $\|f_i\|_{u_2^\times}$.  Unfortunately this is not the case, as the following example shows.  Given $\gamma \in \widehat{\F}$ and $\chi \in \widehat{\F^*}$ let
\begin{equation*}
f_1(t):=\gamma(t^2)\chi(t), f_2(t):=\gamma(t^2)\chi(t),
\end{equation*}
and
\begin{equation*}
f_3(t):=\gamma(-t^2), f_4(t):=\gamma(2t)\overline{\chi(t)}.
\end{equation*}
Then
\begin{eqnarray*}
T(f_1,f_2,f_3,f_4) & = &\E_{x,y \in \F}{\gamma(x^2 + y^2 - (x+y)^2+2xy)\chi(x)\chi(y)\overline{\chi(xy)}}\\
& = & \frac{(p-1)^2+1}{p^2} = 1+o_{p\rightarrow \infty}(1).
\end{eqnarray*}
On the other hand if $\gamma$ and $\chi$ are non-trivial then it may be checked using character sums estimates of the type discussed at the beginning of \S\ref{sec8} that $\Vert f_i \Vert_{u_2^+}, \Vert f_i \Vert_{u_2^{\times}} \ll p^{-1/2}$. (As this is only a motivating example, the exact details of the proof of this need not concern us.)

Write $Q(\F)$ for the set of quadratic phases on $\F$, that is to say
\begin{equation*}
Q(\F):=\{x \mapsto e_p(rx^2+sx):  r,s \in \F\}.
\end{equation*}
In the above example, these quadratic phases mixed with multiplicative characters. This suggests the following definition, which is a key definition in our paper.

\begin{definition}[$\QM$-norm]\label{def.qmnorm}
Suppose that $f : \F \rightarrow \C$ is a function. Then we define
\begin{equation*}
\|f\|_{\QM}:=\sup  \{ |\langle f, \phi\chi \rangle |: \phi \in Q(\F), \chi \in \widehat{\F^*}\},
\end{equation*}
which is easily seen to be a norm.
\end{definition}

The reason this is such an important definition for us is the following fact, which is the main result of the section. It says that in a sense these quadratic-multiplicative examples are the \emph{only} ones affecting the count $T(f_1, f_2,f_3,f_4)$. 

\begin{proposition}\label{gvn-nonlinear}
Suppose that $f_1,f_2,f_3,f_4:\F \rightarrow \C$ are such that $\|f_1\|_\infty,\|f_2\|_\infty,\|f_3\|_\infty,\|f_4\|_\infty \leq 1$. Then
\begin{equation*}
|T(f_1,f_2,f_3,f_4)| = O(\inf_i\max (p^{-1/64},\|f_i\|_{\QM}^{1/5})).
\end{equation*}
\end{proposition}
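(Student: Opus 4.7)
The strategy is a generalised von Neumann argument: apply Cauchy--Schwarz several times to isolate one of the $f_i$, then analyse the resulting dual expression using the multiplicative Fourier transform and Weil-type character-sum estimates. By the $x \leftrightarrow y$ symmetry of $T$, any bound in terms of $\|f_1\|_{\QM}$ yields one in terms of $\|f_2\|_{\QM}$ for free, so it suffices to handle the three cases $i \in \{1,3,4\}$; I take $i = 4$ as the representative case.

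The first move is a substitution that places $f_4$ on a clean variable. Replacing $y$ by $y/x$ for $x \in \F^*$ (and absorbing the $x = 0$ piece as an $O(1/p)$ error) gives
\[ T \;=\; \frac{p-1}{p}\E_{x \in \F^*,\, y \in \F}\, f_1(x)\, f_2(y/x)\, f_3(x + y/x)\, f_4(y) \;+\; O(1/p). \]
I then apply Cauchy--Schwarz repeatedly, each time using an additive shift in one of the remaining variables to square the left-hand side and replace one of $f_1, f_2, f_3$ by a multiplicative derivative $\Delta_h f_j(x) := f_j(x)\overline{f_j(x+h)}$. After a finite number of such steps only $f_4$ remains, and I arrive at an inequality of the shape
\[ |T|^a \;\lesssim\; \E_{\mathbf{h}}\, \Big| \E_y \prod_{\epsilon}\, f_4^{(\epsilon)}\!\big(L_\epsilon(y;\mathbf{h})\big)\Big|^b \;+\; (\text{error}), \]
where the $L_\epsilon$ are rational (affine-linear after clearing denominators) functions of $y$ with parameters $\mathbf{h}$ coming from the Cauchy--Schwarz shifts.

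To bound the right-hand side I expand each $f_4$ via the multiplicative Fourier transform on $\F^*$. The inner $y$-sums then become hybrid character sums $\E_y \prod_\epsilon \chi_\epsilon\!\big(L_\epsilon(y;\mathbf{h})\big)^{\pm 1}$, which by the Weil-type bounds of \S\ref{sec8} are $O(p^{-1/2})$ unless the characters $\chi_\epsilon$ satisfy a trivial cancellation relation. Summing the ``diagonal'' contributions that survive this relation should produce a correlation of $f_4$ with a \emph{quadratic}-multiplicative phase --- the quadratic part appearing because the Cauchy--Schwarz pairings combine with the rational substitution $y \mapsto y/x$ to generate $y^2$ terms after one clears denominators. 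This diagonal is then bounded by $\|f_4\|_{\QM}$; combining with the trivial bound $|T| \leq 1$ via a H\"older interpolation should yield the exponent $1/5$ and the error $p^{-1/64}$.

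The main obstacle I anticipate is ensuring that \emph{quadratic} (not merely linear) phases really do appear in the diagonal contribution: the motivating example just before the statement shows that the purely linear-multiplicative $u_2$-norms alone cannot control $|T|$. Tracking how the $y^2$ terms emerge from the interplay of the Cauchy--Schwarz pairings with the substitution, and arranging the H\"older interpolation so that the non-dyadic exponent $1/5$ emerges rather than a dyadic $1/2^k$, appear to be the most delicate steps.
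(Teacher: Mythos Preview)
Your plan has a genuine gap at its most delicate point: the mechanism by which \emph{additive} quadratic phases $e_p(ay^2)$ are supposed to emerge from a multiplicative Fourier expansion of $f_4$. Expanding $f_4$ over $\widehat{\F^*}$ produces only multiplicative characters $\chi(y)$; the Weil-type estimates of \S\ref{sec8} then control sums of the form $\E_y \prod_\epsilon \chi_\epsilon(L_\epsilon(y;\mathbf{h}))$, and the surviving ``diagonal'' relations among the $\chi_\epsilon$ are again purely multiplicative. There is no step in your sketch that converts such a relation into a correlation of $f_4$ with $e_p(ay^2)\chi(y)$, and the remark that ``$y^2$ terms appear after clearing denominators'' does not bridge this: a polynomial argument inside a multiplicative character is not the same as an additive quadratic phase. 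The example immediately preceding the statement shows that this is not a cosmetic issue --- any argument that only sees multiplicative information about $f_4$ will fail. A secondary issue is your use of additive shifts in the Cauchy--Schwarz step: after the substitution $y\mapsto y/x$ the argument of $f_3$ is $(x^2+y)/x$, and additive shifts in either variable produce rational functions that do not simplify.

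The paper takes a quite different route, and it is worth noting where the quadratic phases actually enter. It first singles out the slot $i=3$ and proves (Proposition~\ref{gvn-3-l2}) that $|T|^8 \lesssim \|f_3\|_{u_3^+}^2 + O(p^{-1/2})$; the Cauchy--Schwarz there exploits the \emph{dilation} symmetry $(x,y)\mapsto (xz^{-1},zy)$, which fixes $xy$ and hence leaves $f_4$ untouched. The quadratic phases are then introduced by hand via a Koopman--von~Neumann decomposition (Lemma~\ref{decomposition-quadratic}) writing $f_3 \approx \sum_\phi \lambda_\phi \phi$ with $\phi\in Q(\F)$ and $\sum|\lambda_\phi|\le 4/\eps$. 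For each fixed $\phi(t)=e_p(at^2+bt)$ the algebraic identity $(x+y)^2 = x^2 + 2xy + y^2$ distributes $\phi(x+y)$ as quadratic phases on $f_1,f_2$ and a \emph{linear} phase on $f_4$, reducing $T(f_1,f_2,\phi,f_4)$ to a purely multiplicative count $T(g_1,g_2,1_\F,g_4)$ handled by Proposition~\ref{prop.mult}. The exponent $1/5$ is not a H\"older artefact but comes from choosing $\eps\sim\delta^4$ in the decomposition (dictated by the $1/4$ exponent in Proposition~\ref{gvn-3-l2}) and then pigeonholing to a single $\phi$, giving $\|f_i\|_{\QM}\gg \delta\cdot\eps \sim \delta^5$ for $i\in\{1,2,4\}$.
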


We shall begin the proof of this shortly, but first we must introduce one final norm.\vspace*{8pt}

\emph{The $u^+_3$-norm.}
Higher-order variants of the $u_2^+$-norm examining correlation with quadratic phases have been widely-studied since the ground-breaking paper \cite{gowers-4ap} of Gowers.  We shall only need a fragment of his ideas here. We define the $u^+_3$-norm by
\begin{equation*}
\|f\|_{u^+_3} :=\sup\{|\langle f,\phi \rangle| : \phi \in Q(\F)\}=\sup\{|\E_{x \in \F}{f(x)\overline{\phi(x)}}|:\phi \in Q(\F)\}.
\end{equation*}
We have the chain of inequalities
\begin{equation}\label{chain} \Vert \cdot \Vert_{u_2^+} \leq \Vert \cdot \Vert_{u_3^+} \leq \Vert \cdot \Vert_{\QM} \leq \Vert \cdot \Vert_1. \end{equation}

A key ingredient in the proof of Proposition \ref{gvn-nonlinear} is the following result which, in view of \eqref{chain}, is in fact stronger than that result when $i = 3$. 
\begin{proposition}\label{gvn-3-l2}
Suppose that $f_1,f_2,f_3,f_4 : \F \rightarrow \C$ are such that $\Vert f_1 \Vert_{\infty}, \Vert f_2 \Vert_{\infty}, \Vert f_4 \Vert_{\infty} \leq 1$, and such that $f_3$ satisfies the slightly weaker bounds $\Vert f_3 \Vert_2 \leq 1$ and $\Vert f_3 \Vert_{\infty} \leq p^{1/16}$. Then
\begin{equation*}
|T(f_1,f_2, f_3, f_4) |^8 \leq  \|f_3\|_{u^+_3}^2+ O(p^{-1/2}).\end{equation*}
\end{proposition}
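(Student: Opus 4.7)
I plan to apply Cauchy--Schwarz twice to eliminate $f_1$ and $f_2$, reducing $T$ to a bilinear average involving $f_3$ and $f_4$, and then to Fourier-expand $f_4$ additively and invoke a completion-of-the-square (Gauss sum) argument to reveal quadratic phases in $z := x+y$, which is precisely how the $u_3^+$-norm of $f_3$ enters.

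\textbf{Two Cauchy--Schwarz steps.} Starting from $T = \E_{x,y} f_1(x) f_2(y) f_3(x+y) f_4(xy)$, I apply Cauchy--Schwarz in $x$ (using $\|f_1\|_2 \leq 1$) to eliminate $f_1$ and introduce a shift $k$, then in $y$ to eliminate $f_2$ and introduce a shift $h$. After changing variables to $z = x+y$ and moving the $f_3$-factors outside the $y$-average, this yields
\[
|T|^4 \leq \E_{z, h, k} F_3(z; h, k) \cdot M(z, h, k),
\]
where $F_3(z; h, k) := f_3(z) \overline{f_3(z+h)}\, \overline{f_3(z+k)}\, f_3(z+h+k)$ is the $U^2$-box of $f_3$ and
\[
M(z, h, k) := \E_y f_4((z-y) y)\, \overline{f_4((z-y)(y+h))}\, \overline{f_4((z-y+k) y)}\, f_4((z-y+k)(y+h))
\]
is the $y$-average of a multiplicative box of $f_4$.

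\textbf{Fourier expansion and Gauss sums.} Expanding $f_4 = \sum_r \widehat{f_4}(r) e_p(r\cdot)$ and substituting into $M$, the inner average over $y$ collapses to a quadratic Gaussian sum $\E_y e_p(-Ry^2 + m y)$ with $R = r_1 - r_2 - r_3 + r_4$ and $m$ linear in $(z, h, k, r_i)$. For $R = 0$ the sum equals $\mathbf{1}[m=0]$ and the surviving $z$-dependence is only a \emph{linear} phase; for $R \neq 0$ completion of the square yields $\E_y e_p(-Ry^2+my) = \epsilon_R \, p^{-1/2} \, e_p\bigl(m^2/(4R)\bigr)$, and crucially $m^2/(4R)$ contains the genuine quadratic-in-$z$ term $R z^2/4$. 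Thus $M$ splits as a sum of phases of the form $\phi(z) \in Q(\F)$, the quadratic ones coming with $p^{-1/2}$-damped weights, and each paired with an $(h, k)$-coefficient built from products of four Fourier coefficients of $f_4$.

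\textbf{Conclusion and main obstacle.} Substituting this decomposition back into $\E_{z,h,k} F_3\, M$ produces, for each phase $\phi$, a term of the form $c(r_*, h, k)\, \E_z f_3(z) \phi(z) \cdot \overline{f_3(z+h)}\, \overline{f_3(z+k)}\, f_3(z+h+k)$. Using the definition $\|f_3\|_{u_3^+} = \sup_{\phi \in Q(\F)} |\langle f_3, \phi\rangle|$, followed by Cauchy--Schwarz in $(h, k)$ and Parseval $\sum_r |\widehat{f_4}(r)|^2 \leq 1$, one arrives at $|T|^8 \leq \|f_3\|_{u_3^+}^2 + O(p^{-1/2})$. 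The main obstacle is the quantitative book-keeping of this last step: the naive bound on $\sum_{r_1,r_2,r_3,r_4} |\widehat{f_4}(r_1) \widehat{f_4}(r_2) \widehat{f_4}(r_3) \widehat{f_4}(r_4)|$ by $\|f_4\|_A^4$ is far too weak, so the four Fourier factors must instead be grouped into $L^2$-pairs and controlled via repeated Cauchy--Schwarz. The hypothesis $\|f_3\|_\infty \leq p^{1/16}$ enters precisely when dominating the auxiliary $L^2$-norm $\|\overline{f_3(\cdot + h)}\, \overline{f_3(\cdot + k)}\, f_3(\cdot + h+k)\|_2 \leq p^{1/8}$ that arises in this step, ensuring that the residual coming from the quadratic ($R \neq 0$) Gauss-sum terms is kept down to $O(p^{-1/2})$.
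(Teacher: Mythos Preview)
Your first two Cauchy--Schwarz steps are correct and do give $|T|^4\le \E_{z,h,k}F_3(z;h,k)\,M(z,h,k)$, and the Gauss-sum analysis of $M$ is sound in outline. The gap is in the last paragraph. After the expansion you face, for each frequency tuple $\mathbf r=(r_1,r_2,r_3,r_4)$, the quantity
\[
\E_z\, f_3(z)\,\phi_{\mathbf r,h,k}(z)\,\overline{f_3(z+h)}\,\overline{f_3(z+k)}\,f_3(z+h+k),
\]
where $\phi_{\mathbf r,h,k}(z)=e_p\bigl(\tfrac{R}{4}z^2+b(h,k)z\bigr)$ and the linear coefficient $b$ genuinely depends on $h,k$. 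This is a twisted Gowers box, not $\langle f_3,\overline\phi\rangle$, so one cannot ``use the definition of $\|f_3\|_{u_3^+}$'' at this point. In fact only the diagonal $r_1=r_2=r_3=r_4=r$ collapses cleanly: there $R=0$, the $z$-phase is trivial, the constant $e_p(rhk)$ survives, and one computes $\E_{z,h,k}F_3(z;h,k)e_p(rhk)=\E_k|\widehat{\Delta_kf_3}(-rk)|^2$, which is exactly what Lemma~\ref{lem.tim} bounds by $\|f_3\|_{u_3^+}^2$. For the off-diagonal $\mathbf r$ you must sum over essentially $p^3$ (when $R=0$) or $p^4$ (when $R\ne 0$) tuples, weighted by $|\widehat{f_4}(r_1)\cdots\widehat{f_4}(r_4)|$, against inner expectations for which your only handle is $\|f_3\|_\infty^2\|f_3\|_2^2\le p^{1/8}$. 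Grouping the four Fourier weights into $L^2$-pairs via Parseval leaves a residual factor of order $(\sum_r|\widehat{f_4}(r)|)^2$, which can be as large as $p$; the resulting error is of size $p^{1/8}$, not $O(p^{-1/2})$. I do not see how to close this along your route, and the sentence about $\|f_3\|_\infty\le p^{1/16}$ keeping the residual at $O(p^{-1/2})$ does not hold up.

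The paper's argument is organised quite differently. Before any Cauchy--Schwarz it exploits the dilation symmetry $(x,y)\mapsto(xz^{-1},zy)$ for $z\in\F^*$ and averages over $z$; the \emph{first} Cauchy--Schwarz then removes $f_4$ outright rather than leaving four copies of it. Two further Cauchy--Schwarz steps (removing $f_1$, then $f_2$) lead directly to expressions of the form $\E_z|\widehat{\Delta_{zh}f_3}(zr)|^2$, to which Lemma~\ref{lem.tim} applies. The dilation trick is what linearises the product $xy$; your direct additive Fourier expansion of $f_4$ never achieves this, which is why the off-diagonal sum explodes.
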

\emph{Remark.} Of course, the conclusion is valid under the stronger assumption that $\Vert f_3 \Vert_{\infty} \leq 1$, but it will be helpful to allow weaker bounds when applying Lemma \ref{decomposition-quadratic} below.

The proof follows the ideas of \cite{gowers-4ap}, although the additional multiplicative structure makes the argument considerably simpler. In particular we make no use of the Balog-Szemer\'edi-Gowers theorem or any results in the direction of Freiman's theorem. Following Gowers we introduce some notation.  For any $f:\F \rightarrow \C$, we write
\begin{equation*}
\Delta_hf(x):=f(x+h)\overline{f(x)} \text{ for all }x,h \in \F.
\end{equation*}
The operator acts as a difference operator in the exponent so that if $\phi \in Q(\F)$ then $\Delta_h \phi$ is (a constant times) a linear character, and that character itself depends linearly on $h$. As in \cite{gowers-4ap}, we use a fairly straightforward converse of this fact. 
\begin{lemma}\label{lem.tim}
Suppose that $f:\F \rightarrow \C$ has $\|f\|_{2}\leq 1$. Then
\[ \sup_{h \in \F^*, r \in \F} \E_{z \in \F} |\widehat{\Delta_{zh}f}(zr)|^2 \leq \Vert f \Vert_{u_3^+}^2.\]
\end{lemma}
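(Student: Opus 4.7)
The plan is to manufacture, for each fixed choice of $h \in \F^*$ and $r \in \F$, a single quadratic phase $\phi \in Q(\F)$ whose ``derivative'' $\Delta_{zh}\phi$ generates exactly the linear character appearing in $\widehat{\Delta_{zh}f}(zr)$, so that twisting $f$ by $\overline{\phi}$ eliminates the entire $z$-dependence of the frequency. Concretely, I will take $\phi(x) := e_p(rx^2/(2h))$ (which requires only $p$ to be odd, and $p=2$ is of no interest here). A direct computation gives
\[ \Delta_{zh}\phi(x) = e_p\!\left(\tfrac{r}{2h}\bigl((x+zh)^2 - x^2\bigr)\right) = e_p(rxz) \cdot e_p(rhz^2/2),\]
so that $e_p(-zrx) = e_p(rhz^2/2)\cdot \overline{\Delta_{zh}\phi(x)}$. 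Substituting this into the definition of the Fourier transform and using the identity $f(x+zh)\overline{f(x)}\,\overline{\Delta_{zh}\phi(x)} = \Delta_{zh}(f\overline{\phi})(x)$ yields
\[ \widehat{\Delta_{zh}f}(zr) = e_p(rhz^2/2)\cdot A_g(zh), \qquad g := f\overline{\phi},\]
where $A_g(u) := \E_x g(x+u)\overline{g(x)}$ is the autocorrelation of $g$.

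Taking absolute squares kills the unit-modulus phase, and since $h \in \F^*$ the substitution $u = zh$ is a bijection on $\F$, so $\E_{z} |\widehat{\Delta_{zh}f}(zr)|^2 = \E_{u} |A_g(u)|^2$. From here the argument is routine: a short calculation shows $\widehat{A_g}(\xi) = |\widehat{g}(\xi)|^2$, so Parseval's identity \eqref{eqn.apars} gives $\E_u |A_g(u)|^2 = \sum_\xi |\widehat{g}(\xi)|^4$, which is exactly the usual Gowers $U^2$-norm $\|g\|_{U^2}^4$. Bounding the fourth power by max-times-$\ell^2$ then yields $\|g\|_{U^2}^4 \leq \|g\|_{u_2^+}^2 \cdot \|g\|_2^2$.

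It remains only to translate these two norms of $g = f\overline{\phi}$ back into quantities involving $f$. Since $|\phi|\equiv 1$ one has $\|g\|_2 = \|f\|_2 \leq 1$. For the $u_2^+$-norm, the identity $\langle f\overline{\phi}, \gamma \rangle = \langle f, \phi\gamma\rangle$, together with the observation that $\phi\gamma \in Q(\F)$ whenever $\gamma$ is a linear character, gives $\|g\|_{u_2^+} \leq \|f\|_{u_3^+}$. Combining these two estimates produces $\E_z |\widehat{\Delta_{zh}f}(zr)|^2 \leq \|f\|_{u_3^+}^2$, and since the bound is uniform in $h \in \F^*$ and $r \in \F$ the supremum version follows immediately. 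There is no real obstacle in this plan: the only moment of insight is choosing the correct quadratic phase $\phi$, after which everything collapses to Parseval and the standard $U^2 \leq u_2^+\cdot L^2$ inequality.
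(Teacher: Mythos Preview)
Your proof is correct and is essentially identical to the paper's: your $g = f\overline{\phi}$ with $\phi(x) = e_p(rx^2/(2h))$ is exactly the paper's $g(x) = f(x)e_p(-rx^2/2h)$, and both proofs then reduce to $\sum_s |\widehat{g}(s)|^4 \leq \|g\|_{u_2^+}^2\|g\|_2^2 \leq \|f\|_{u_3^+}^2$ via Parseval. The only cosmetic difference is that you phrase the intermediate step through the autocorrelation $A_g$, whereas the paper writes out the double sum $\E_{x,y,z'} g(x)\overline{g(x+z')}\overline{g(y)}g(y+z')$ directly.
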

\begin{proof}
Let $h \in \F^*$ and $r \in \F$ be arbitrary. Since $h \neq 0$ we can write $g(x):=f(x)e_p(-rx^2/2h)$ and note that $\Vert g \Vert_2 = \Vert f \Vert_2 \leq 1$ and that $\Vert g \Vert_{u_3^+} = \Vert f \Vert_{u_3^+}$.
Applying the basic facts of additive Fourier analysis we have
\begin{align*}
\E_{z \in \F} |\widehat{\Delta_{zh}f}(zr)|^2 &  = \E_{z}{\E_{x,y}{f(x)\overline{f(x+zh)}e_p(zrx) \overline{f(y)}f(y+zh)e_p(-zry)}}\\
 & =  \E_{z}{\E_{x,y}{g(x)\overline{g(x+zh)}\overline{g(y)}g(y+zh)}}\\
 & = \E_{z'}{\E_{x,y}{g(x)\overline{g(x+z')}\overline{g(y)}g(y+z')}} \;\; \mbox{(since $h \neq 0$)}\\
 & =  \sum_s{|\widehat{g}(s)|^4}  
 \leq \|g\|_{u_2^+}^2\|g\|_{2}^2 \leq \|g\|_{u_2^+}^2 \leq \| g \|^2_{u_3^+} = \| f \|^2_{u_3^+}.
\end{align*}
The result follows.
\end{proof}

\begin{proof}[Proof of Proposition \ref{gvn-3-l2}]
If $z \in \F^*$ then, as $(x,y)$ ranges over $\F \times \F$, so does $(xz^{-1}, zy)$. It follows that 
\[ T(f_1, f_2, f_3, f_4) = \E_{x,y \in \F} f_1(xz^{-1})f_2(zy)f_3(xz^{-1}+zy)f_4(xy).\] Averaging over all $z$, we have
\[
T(f_1,f_2,f_3,f_4) = \E_{x,y \in \F} f_4(xy) \E_{z \in \F^*} f_1(xz^{-1})f_2(zy)f_3(xz^{-1}+zy).
\]
By Cauchy-Schwarz and the inequality $\|f_4\|_{\infty}\leq 1$ it follows that 
\begin{align}\nonumber
|T(f_1, f_2, f_3, f_4)|^2 & \leq \E_{x,y \in \F} |\E_{z \in \F^*} f_1(xz^{-1})f_2(zy)f_3(xz^{-1}+zy) |^2 \\ \nonumber  & = \E_{z_0, z_1 \in \F^*} \E_{x \in \F} f_1(xz_0^{-1})\overline{f_1(xz_1^{-1})}\\
& \qquad \qquad \qquad \times   \E_{y \in \F} f_2(z_0y)\overline{f_2(z_1y)} f_3(xz_0^{-1}+z_0y)\overline{f_3(xz_1^{-1}+z_1y)} .\label{1st-cauchy}
\end{align}
For fixed $z_0, z_1$, we apply Cauchy-Schwarz to the inner average over $x$.  We obtain
\begin{align*}
  \big| & \E_{x \in \F} f_1(xz_0^{-1})\overline{f_1(xz_1^{-1})}  \E_{y \in \F} f_2(z_0y)\overline{f_2(z_1y)}f_3(xz_0^{-1}+z_0y)\overline{f_3(xz_1^{-1}+z_1y)}  \big|^2 \\ & \leq 
\E_{x \in \F} |f_1 (x z_0^{-1}) f_1(x z_1^{-1})|^2  \cdot \E_{x \in \F} \big| \E_{y \in \F} f_2(z_0y)\overline{f_2(z_1y)}f_3(xz_0^{-1}+z_0y)\overline{f_3(xz_1^{-1}+z_1y)} \big|^2.
\end{align*}
Since $\Vert f_1 \Vert_{\infty} \leq 1$ (in fact $\Vert f_1 \Vert_4 \leq 1$ would be enough) this is bounded above by
\begin{align*}
\E_{x \in \F} & \big| \E_{y \in \F} f_2(z_0y)\overline{f_2(z_1y)}f_3(xz_0^{-1}+z_0y)\overline{f_3(xz_1^{-1}+z_1y)} \big|^2 \\ & = \E_{x,y_0,y_1 \in \F} f_2(z_0y_0)\overline{f_2(z_1y_0)}\overline{f_2(z_0y_1)}f_2(z_1y_1) \times \\ & \times f_3(xz_0^{-1}+z_0y_0)\overline{f_3(xz_1^{-1}+z_1y_0)}  \overline{f_3(xz_0^{-1}+z_0y_1)}f_3(xz_1^{-1}+z_1y_1).
\end{align*}
Substituting back in to \eqref{1st-cauchy}, we see that \begin{align*}
& |T(f_1, f_2, f_3, f_4)|^4  \leq \E_{z_0, z_1 \in \F^*, x, y_0, y_1 \in \F} f_2(z_0y_0)\overline{f_2(z_1y_0)}\overline{f_2(z_0y_1)}f_2(z_1y_1) \times \\ & \qquad  \times f_3(xz_0^{-1}+z_0y_0) \overline{f_3(xz_1^{-1}+z_1y_0)}  \overline{f_3(xz_0^{-1}+z_0y_1)}f_3(xz_1^{-1}+z_1y_1).
\end{align*}
Write $y = y_0$ and $h = y_1 - y_0$. The pair $(y,h)$ ranges uniformly over $\F \times \F$ as $(y_0, y_1)$ does. Evidently
\[ f_2(z_0y_0)\overline{f_2(z_1y_0)} \overline{f_2(z_0y_1)}f_2(z_1y_1) = f_2(z_0y)\overline{f_2(z_1y)}\overline{f_2(z_0(y + h))}f_2(z_1(y+h)); \]
moreover
\[ f_3(xz_0^{-1}+z_0y_0)\overline{f_3(xz_1^{-1}+z_1y_0)}  \overline{f_3(xz_0^{-1}+z_0y_1)}f_3(xz_1^{-1}+z_1y_1) = \] \[ \overline{\Delta_{z_0h}f_3(xz_0^{-1}+z_0y)}\Delta_{z_1h}f_3(xz_1^{-1}+z_1y) .\]
Thus
\[ |T(f_1, f_2, f_3, f_4)|^4 \leq \E_{z_0, z_1 \in \F^*, y, h \in \F} f_2(z_0y)\overline{f_2(z_1y)}\overline{f_2(z_0(y + h))}f_2(z_1(y+h)) \times \] \begin{equation}\label{to-cauchy-2} \qquad \qquad \times \E_{x \in \F}\overline{\Delta_{z_0h}f_3(xz_0^{-1}+z_0y)}\Delta_{z_1h}f_3(xz_1^{-1}+z_1y) .\end{equation}
If $z \in \F^*$, we write $m_z : L^{\infty}(\F) \rightarrow L^{\infty}(\F)$ for the operator defined by $(m_z g)(x) := g(zx)$ for all $x \in \F$. Then
\[ \overline{\Delta_{z_0h}f_3(xz_0^{-1}+z_0y)}\Delta_{z_1h}f_3(xz_1^{-1}+z_1y) = \overline{m_{z_0^{-1}}\Delta_{z_0 h} f _3(x + z_0^2 y)}m_{z_1^{-1}} \Delta_{z_1 h} f_3 (x + z_1^2 y).\]
Therefore
\[ \E_{x \in \F}\overline{\Delta_{z_0h}f_3(xz_0^{-1}+z_0y)}\Delta_{z_1h}f_3(xz_1^{-1}+z_1y) = (F_{z_0, h} \ast G_{z_1, h} )((z_0^2 - z_1^2) y),\] where
\[ F_{z_0, h}(t) := \overline{m_{z_0^{-1}}\Delta_{z_0 h}f_3(t)}\]
and
\[ G_{z_1, h}(t) :=  m_{z_1^{-1}}\Delta_{z_1 h} f_3(-t).\]
Therefore by \eqref{to-cauchy-2} we have
\[ |T(f_1, f_2, f_3, f_4)|^4 \leq  \E_{z_0, z_1 \in \F^*, y, h \in \F} f_2(z_0y)\overline{f_2(z_1y)} \overline{f_2(z_0(y + h))}f_2(z_1(y+h))  (F_{z_0, h} \ast G_{z_1, h} )((z_0^2 - z_1^2) y).\]
By Cauchy-Schwarz once more (and the fact that $\Vert f_2 \Vert_{\infty} \leq 1$) we have
\[ |T(f_1, f_2, f_3, f_4)|^8 \leq \E_{h \in \F} \E_{z_0, z_1 \in \F^*} \E_{y \in \F}|(F_{z_0, h} \ast G_{z_1, h} )((z_0^2 - z_1^2) y)|^2.\]
We have arranged the three averages in this way to make the next step clearer. If $z_0^2 \neq z_1^2$ then the inner average over $y$ is precisely $\Vert F_{z_0, h} \ast G_{z_1, h} \Vert_2^2$. If $z_0^2 = z_1^2$ then the inner average is of course simply $|(F_{z_0, h} \ast G_{z_1, h} )(0)|^2$. There are at most $2(p-1)$ pairs satisfying the second condition, so we have 
\begin{align}\nonumber |T(f_1, f_2, f_3, f_4)|^8 & \leq \E_{h \in \F}\E_{z_0, z_1 \in \F^*} \Vert F_{z_0, h} \ast G_{z_1, h} \Vert_2^2 +  \\ \nonumber & \qquad\qquad + \frac{2}{p-1} \sup_{z_0, z_1}|(F_{z_0, h} \ast G_{z_1, h} )(0)|^2 \\ \nonumber & \leq \E_{h \in \F, z_0, z_1 \in \F^*} \Vert F_{z_0, h} \ast G_{z_1, h} \Vert_2^2 + O(p^{-1/2}) \\ \label{use-9} & = \E_{h \in \F, z_0, z_1 \in \F^*} \sum_{r \in \F}| \widehat{F}_{z_0,h}(r)|^2 |\widehat{G}_{z_1,h}(r)|^2 + O(p^{-1/2}).\end{align} where in the second line we used the inequality $\Vert f_3 \Vert_{\infty} \leq p^{1/16}$, and in the third (the additive) Parseval's identity and the fact that convolution goes to multiplication. Now observe that for any $g \in L^{\infty}(\F)$ and $z \in \F^*$ we have
\[ (m_{z^{-1}} g)^{\wedge}(r)  = \E_{x \in \F} g(z^{-1} x) e_p(-rx) = \E_{x \in \F} g(x) e_p(-rzx) = \hat{g}(zr).\]
Thus
\[ \widehat{F}_{z_0, h}(r) = \overline{\widehat{\Delta_{z_0 h} f_3}(-z_0 r)} \text{ and } \widehat{G}_{z_1, h}(r) = \widehat{\Delta_{z_1 h} f_3}(-z_1 r).\]
It follows from \eqref{use-9} that
\begin{align*} |T(f_1, f_2, f_3, f_4)|^8 & \leq \E_{h \in \F, z_0, z_1 \in \F^*} \sum_{r \in \F} |\widehat{\Delta_{z_0h}f_3}(z_0r)|^2| \widehat{\Delta_{z_1h}f_3}(z_1r)|^2+O(p^{-1/2}).\\   & = \E_{h \in \F} \sum_{r \in \F}  \bigg(  \E_{z \in \F^*} | \widehat{\Delta_{zh} f_3}( zr) |^2  \bigg)^2 + O(p^{-1/2}) .\end{align*}
Using Parseval's identity, we see that the contribution to this average from $h = 0$ is
\begin{align*}   \frac{1}{p} \sum_{r \in \F} \bigg( \E_{z \in \F^*} |\widehat{\Delta_0 f_3}(zr) |^2 \bigg)^2  & = \frac{1}{p}\sum_{r \in \F} \bigg( \E_{z \in \F^*} \big| \widehat{|f_3|^2}(zr) \big| \bigg)^2 \\ & \leq \frac{4}{p^3} \sum_{r \in \F} \bigg( \sum_{z \in \F} \big| \widehat{|f_3|^2}(zr) \big|^2  \bigg)^2 \\ & \leq \frac{4}{p} \big| \widehat{ |f_3|^2 }(0) \big|^4 + \frac{4}{p^2} \bigg(  \sum_{s \in \F} \big| \widehat{|f_3|^2}(s) \big|^2 \bigg)^2 \\ & = \frac{4}{p} \big| \widehat{ |f_3|^2 }(0) \big|^4 + \frac{4}{p^2} \bigg( \E_{x \in \F} \big ||f_3|^2(x) \big|^2\bigg)^2 \\ & \leq \frac{4\Vert f_3 \Vert_{\infty}^8}{p} + \frac{4\Vert f_3 \Vert_{\infty}^8}{p^2}  = O(p^{-1/2}).\end{align*}
Thus by Parseval's identity (since $z$ ranges over $\F^*$) and interchanging the order of summation we have
\begin{align*} |T(& f_1, f_2, f_3, f_4)|^8 \\ & \leq \E_{h \in \F} 1_{\F^*}(h) \sum_{r \in \F}  \bigg(  \E_{z \in \F^*} | \widehat{\Delta_{zh} f_3}( zr) |^2  \bigg)^2 + O(p^{-1/2}) \\ & \leq \bigg(\sup_{h \in \F^*,r \in \F } \E_{z \in \F^*} | \widehat{\Delta_{zh} f_3}( zr) |^2\bigg) \cdot \E_{h \in \F}  \sum_{r \in \F}   \E_{z \in \F^*} | \widehat{\Delta_{zh} f_3}( zr) |^2   + O(p^{-1/2}) \\ & = \bigg(\sup_{h\in \F^*,r \in \F } \E_{z \in \F^*} | \widehat{\Delta_{zh} f_3}( zr) |^2\bigg) \cdot  \E_{z \in \F^*,h \in \F}  \Vert \Delta_{zh} f_3 \Vert_2^2   + O(p^{-1/2}).\end{align*}
However, for $z \in \F^*$ we have
\[ \E_{h \in \F} \Vert \Delta_{zh} f_3 \Vert_2^2 = \E_h \E_x |f_3(x+zh) \overline{f_3(x)}|^2 = \E_x|f_3(x)|^2\E_h{|f_3(x+zh)|^2} = \|f_3\|_2^4 \leq 1, \]
and hence
\begin{equation}\label{to-tim} |T(f_1, f_2, f_3, f_4)|^8 \leq \sup_{h \in \F^*,r \in \F } \E_{z \in \F^*} | \widehat{\Delta_{zh} f_3}( zr) |^2  + O(p^{-1/2}).\end{equation}
We are now almost ready to apply Lemma \ref{lem.tim} to (at last) bound this in terms of the $u_3^+$-norm of $f_3$. To do this, we need only extend the $z$-average to include $z = 0$. We have
\[ 
\E_{z \in \F^*} |\widehat{\Delta_{zh}f_3}(zr)|^2  \leq  \frac{p}{p-1}\E_{z \in \F} |\widehat{\Delta_{zh} f_3}(zr)|^2 \leq  \E_{z \in \F} |\widehat{\Delta_{zh}(f_3)}(zr)|^2 + \frac{2\Vert f_3 \Vert_{\infty}^4}{p}.
\]
Now we use $\|f_3\|_\infty \leq p^{1/16}$, \eqref{to-tim} and Lemma \ref{lem.tim} to obtain
\[ |T(f_1, f_2, f_3, f_4)|^8 \leq \sup_{h \in \F^*,r \in \F} \E_{z \in \F} | \widehat{\Delta_{zh} f_3}( zr) |^2  + O(p^{-1/2}) \leq \Vert f_3 \Vert_{u_3^+}^2 + O(p^{-1/2}),\] which is precisely the stated result. 
\end{proof}

Now we deduce Proposition \ref{gvn-nonlinear} itself.  A key ingredient in this process is the following decomposition result, reminiscent of results of ``Koopman von Neumann'' type. There are closely-related results in \cite{gowers-decomp, gowers-wolf1,gowers-wolf2,gowers-wolf3,trevisan-et-al}. However, in our particular setting it is not too hard to establish what we need quite directly. 
\begin{lemma}\label{decomposition-quadratic}
Suppose that $f : \F \rightarrow \C$ has $\|f\|_{2}\leq 1$ and that $\eps \geq 4p^{-1/8}$ is a parameter. Then there are complex numbers $\lambda_{\phi}$, $\phi \in Q(\F)$, such that
\[
\big\| f -  \sum_{\phi \in Q(\F)}{\lambda_{\phi} \phi} \big\|_{u^+_3} \leq \eps, \;\;  \big\|\sum_{\phi \in Q(\F)}{\lambda_{\phi} \phi}\big\|_{2}^2\leq 3 \;\; \mbox{and} \;\sum_{\phi \in Q(\F)} |\lambda_{\phi}| \leq \frac{4}{\eps}.
\]
\end{lemma}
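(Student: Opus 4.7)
The plan is to build $F = \sum_\phi \lambda_\phi \phi$ by a greedy matching-pursuit procedure. I would initialise $f_0 := f$ and, at stage $k$, stop if $\|f_k\|_{u_3^+} \leq \eps$; otherwise pick $\phi_k \in Q(\F)$ achieving $|\langle f_k, \phi_k\rangle| > \eps$ (the supremum in $\|\cdot\|_{u_3^+}$ is attained because $Q(\F)$ is finite), set $c_k := \langle f_k, \phi_k\rangle$, and put $f_{k+1} := f_k - c_k\phi_k$. Since $\|\phi_k\|_2 = 1$, expanding gives the energy decrement $\|f_{k+1}\|_2^2 = \|f_k\|_2^2 - |c_k|^2 \leq \|f_k\|_2^2 - \eps^2$, so the process terminates in $K \leq 1/\eps^2$ stages. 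Cauchy--Schwarz then yields $\sum_k |c_k| \leq \sqrt{K}\bigl(\sum_k |c_k|^2\bigr)^{1/2} \leq 1/\eps$, and (since $|c_k| > \eps$ at every non-terminal stage) the same bound is in force at all intermediate stages too.

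The crucial step, which I expect to be the main obstacle, is to show that the selected $\phi_k$ are in fact all distinct. The external input I would use is the Gauss-sum estimate $|\langle \phi, \phi'\rangle| \leq p^{-1/2}$ for distinct $\phi,\phi' \in Q(\F)$, which comes from completing the square on $e_p((r-r')x^2 + (s-s')x)$ and the standard evaluation of a quadratic exponential sum. Arguing by induction, suppose $\phi_1,\dots,\phi_{k-1}$ are distinct. The update rule gives $\langle f_{j+1},\phi_j\rangle = 0$, so telescoping $f_k = f_{j+1} - \sum_{i=j+1}^{k-1} c_i\phi_i$ yields
\begin{equation*}
\langle f_k, \phi_j\rangle = -\sum_{i=j+1}^{k-1} c_i \langle \phi_i, \phi_j\rangle \quad (j<k),
\end{equation*}
and the Gauss-sum bound gives $|\langle f_k,\phi_j\rangle| \leq (\sum_i |c_i|)/\sqrt p \leq 1/(\eps\sqrt p)$. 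The hypothesis $\eps \geq 4p^{-1/8}$ then forces $1/(\eps\sqrt p) \leq p^{-3/8}/4 < \eps$, so no previously chosen $\phi_j$ can be the stage-$k$ maximiser; hence $\phi_k \notin \{\phi_1,\dots,\phi_{k-1}\}$.

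Once distinctness is secured, the three required bounds fall out. Writing $F = \sum_{k=1}^K c_k\phi_k = \sum_\phi \lambda_\phi \phi$, we immediately get $\sum_\phi |\lambda_\phi| = \sum_k |c_k| \leq 1/\eps \leq 4/\eps$, and the $u_3^+$-bound on $f-F = f_K$ holds by construction. For $\|F\|_2^2$ I would expand
\begin{equation*}
\|F\|_2^2 = \sum_k |c_k|^2 + \sum_{j \neq k} c_j \overline{c_k}\langle \phi_j,\phi_k\rangle \leq 1 + \frac{(\sum_k |c_k|)^2}{\sqrt p} \leq 1 + \frac{1}{\eps^2\sqrt p},
\end{equation*}
whose second term is $O(p^{-1/4})$ by hypothesis, giving $\|F\|_2^2 \leq 3$ with enormous slack. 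Without the distinctness property, same-phase off-diagonal terms contribute pollutants of order $1$ (because $|\langle \phi,\phi\rangle| = 1$) and this $L^2$ estimate collapses; preventing exactly that is what the quantitative hypothesis $\eps \geq 4p^{-1/8}$ is designed to buy.
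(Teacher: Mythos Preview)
Your argument is correct, but it takes a genuinely different route from the paper. The paper uses a one-shot thresholding: it sets $\lambda_\phi := \langle f,\phi\rangle$ for those $\phi$ with $|\langle f,\phi\rangle| \geq \eps/2$ and $\lambda_\phi = 0$ otherwise, then uses the Gauss-sum near-orthogonality (exactly as you do) to bound the size of the support $I = \{\phi : \lambda_\phi \neq 0\}$ by $8/\eps^2$ and to control $\|\sum_\phi \lambda_\phi \phi\|_2$. There is no iteration and no need to check distinctness, since the $\lambda_\phi$ are indexed by $Q(\F)$ from the outset; on the other hand, bounding $|I|$ requires a short Bessel-type argument. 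Your greedy matching-pursuit gives a clean energy decrement $\|f_{k+1}\|_2^2 = \|f_k\|_2^2 - |c_k|^2$ and actually yields slightly sharper constants ($\sum|\lambda_\phi| \leq 1/\eps$ and $\|F\|_2^2 \leq 1 + O(p^{-1/4})$), at the price of the inductive distinctness check. Both arguments hinge on the same Gauss-sum input and the hypothesis $\eps \geq 4p^{-1/8}$ is used for the same purpose in each, namely to keep the off-diagonal pollution below the threshold $\eps$.
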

\begin{proof}
Define
\begin{equation*}
\lambda_{\phi}:=\begin{cases} \langle f,{\phi}\rangle & \text{ if } |\langle f,{\phi}\rangle| \geq \eps/2\\
0 & \text{ otherwise}.\end{cases}
\end{equation*}
Write $I$ for the set of $\phi$ such that $\lambda_{\phi} \neq 0$, and let $I' \subset I$.  By Cauchy-Schwarz we have $|\lambda_{\phi}|\leq 1$ for all $\phi$.  Using this and the Gauss sum estimate
\[ |\langle \phi, \phi' \rangle | \leq \frac{1}{\sqrt{p}} \text{ whenever } \phi \neq \phi',\]we have
\begin{equation}\label{eq771}
\big \|\sum_{\phi \in I'}{\lambda_\phi\phi}\big\|_{2}^2 \leq \sum_{\phi \in I'}{|\lambda_\phi|^2} + \sum_{\phi\neq \phi'; \phi,\phi' \in I'}{|\langle \phi,{\phi'}\rangle|} \leq \sum_{\phi \in I'}{|\lambda_\phi|^2} + \frac{|I'|^2}{\sqrt{p}}.
\end{equation}
On the other hand from the definition of the $\lambda_\phi$s and the Cauchy-Schwarz inequality we have
\[ \sum_{\phi \in I'} |\lambda_{\phi}|^2 = \big\langle f, \sum_{\phi \in I'} \lambda_{\phi} \phi \big\rangle \leq \Vert f \Vert_2 \big\Vert \sum_{\phi \in I'} \lambda_{\phi} \phi \big\Vert_2 .\]
Comparing this with \eqref{eq771} (and using $\Vert f \Vert_2 \leq 1$) gives
\[ 
\big(\sum_{\phi \in I'}{|\lambda_\phi|^2}\big)^2 \leq \sum_{\phi \in I'}{|\lambda_\phi|^2} + \frac{|I'|^2}{\sqrt{p}};
\]
This implies that 
\begin{equation}\label{eq772} \sum_{\phi \in I'}{|\lambda_\phi|^2} \leq 1+ \frac{|I'|^2}{\sqrt{p}} .\end{equation}
This is true for all $I' \subset I$. Since $|\lambda_{\phi}| \geq \eps/2$ for all $\phi \in I$, it follows that 
\[ \frac{\eps^2 m}{4} \leq 1 + \frac{m^2}{\sqrt{p}} \text{ whenever } 0 \leq m \leq |I| \text{ and } m \in\N.\]
With the assumption that $\eps \geq 4 p^{-1/8}$ if follows that if $|I| \geq  8/\eps^2$ then we can take $m \in \N$ with $8/\eps^2\leq m < 8/\eps^2+1$ contradicting the above inequality.  Therefore 
\begin{equation}\label{ibound} |I| \leq \frac{8}{\eps^2} \leq p^{1/4}.\end{equation}

Taking $I' = I$ in \eqref{eq771} and  \eqref{eq772}, then using \eqref{ibound}, we have
\begin{equation}\label{eq773} \big \|\sum_{\phi \in I}{\lambda_\phi\phi}\big\|_{2}^2 \leq 1 + 2\frac{|I|^2}{\sqrt{p}} \leq 3.\end{equation}

Taking $I' = I$ in \eqref{eq772}, then using \eqref{ibound}, we have
\begin{equation}\label{eq774} \sum_{\phi \in Q(\F)} |\lambda_{\phi}| \leq \frac{2}{\eps} \sum_{\phi \in I} |\lambda_{\phi}|^2 \leq \frac{4}{\eps}.   \end{equation}

It follows from this and the Gauss sum estimate that if $\phi' \in I$ then
\begin{equation}\label{eq776}
\big|\langle f - \sum_\phi \lambda_\phi \phi , {\phi'}\rangle\big| = \big|\sum_{\phi \neq \phi'}{\lambda_\phi\langle \phi,\phi'\rangle}\big| \leq \frac{4}{\eps\sqrt{p}} \leq \eps,
\end{equation}
whilst if $\phi' \in Q(\F) \setminus I$ then
\begin{equation}\label{eq777}
\big|\langle f - \sum_\phi \lambda_\phi \phi , {\phi'}\rangle\big| = \big|\langle f,\phi'\rangle - \sum_{\phi}{\lambda_\phi\langle \phi,\phi'\rangle}\big| \leq \frac{\eps}{2} + \frac{4}{\eps\sqrt{p}} \leq \eps.
\end{equation}
Taken together, \eqref{eq773}, \eqref{eq774}, \eqref{eq776} and \eqref{eq777} cover all the statements we claimed, and the proof is complete.
\end{proof}

We are finally ready for the proof of Proposition \ref{gvn-nonlinear}, the main result of this section.
\begin{proof}[Proof of Proposition \ref{gvn-nonlinear}]
Set $\delta := |T(f_1,f_2,f_3,f_4)|$. If $\delta < 4 p^{-1/64}$ then the result is trivial, so suppose this is not the case. If $\inf_{i \in \{ 1,2,3,4\}} \Vert f_i \Vert_{\QM} = \Vert f_3 \Vert_{\QM}$ then the result follows immediately from Proposition \ref{gvn-3-l2}. It therefore suffices to show that under the assumptions just stated we have
\begin{equation}\label{to-show-final} \inf_{i \in\{ 1,2,4\}} \Vert f_i \Vert_{\QM} \gg \delta^5.\end{equation}

Apply Lemma \ref{decomposition-quadratic} with $f = f_3$ and with a parameter $\eps := 2^{-13}\delta^4$.  This gives us coefficients $(\lambda_\phi)_{\phi \in Q(\F)}$ such that
\[
\big\| f_3 -  \sum_{\phi \in Q(\F)}{\lambda_{\phi} \phi} \big\|_{u^+_3} \leq \eps, \;\; \big\|\sum_{\phi}{\lambda_{\phi} \phi}\big\|_{2}^2\leq 3 
\; \; \mbox{and} \; \sum_{\phi \in Q(\F)} |\lambda_{\phi}| \leq \frac{4}{\eps}.\]
Set
\begin{equation*}
g_3:=f_3 -  \sum_{\phi \in Q(\F)}{\lambda_{\phi} \phi};
\end{equation*}
thus $\|g_3\|_{2} \leq 1+\sqrt{3} < 4$, $\|g_3\|_{\infty} \leq 1+ \frac{4}{\eps} \leq \frac{8}{\eps}$ and $\|g_3\|_{u^+_3}\leq \eps$. By Proposition \ref{gvn-3-l2} we have, since $\eps \geq 8p^{-1/16}$,
\[ |T(f_1, f_2, g_3, f_4)|^8 \leq 2^{16}\big(\eps^2 + O(p^{-1/2})\big) \leq 2^{17}\eps^2,\]
provided $p$ is sufficiently large absolutely which we may certainly assume.  Thus
\[ \big | T(f_1, f_2, f_3, f_4) - \sum_{\phi} \lambda_{\phi} T(f_1, f_2, \phi, f_4) \big|^8 \leq 2^{17} \eps^2 < \left(\frac{\delta}{2}\right)^8.\]
It follows that 
\[ \big| \sum_{\phi} \lambda_{\phi} T(f_1, f_2,\phi, f_4) \big| \geq \frac{\delta}{2},\] and hence that there is some $\phi \in Q(\F)$ for which
\[ |T(f_1, f_2,\phi, f_4)| \geq \frac{\delta\eps}{8} \gg \delta^5.\]
Suppose that $\phi(t)=e_p(at^2+bt)$, and write $g_1(t):=f_1(t)e_p(at^2+bt)$, $g_2(t):=f_2(t)e_p(at^2+bt)$ and $g_4(t):=f_4(t)e_p(2at)$.  Then 
\[ T(g_1, g_2, 1_{\F}, g_4) = T(f_1, f_2,\phi, f_4),\] and hence
\[ |T(g_1, g_2, 1_{\F}, g_4)| \gg \delta^5.\]
By Proposition \ref{prop.mult} it follows that 
\[ \inf_{i \in\{ 1,2,4\}} \Vert g_i \Vert_{u_2^{\times}} + O\left(\frac{1}{p}\right) \gg \delta^5,\] and so
\[ \inf_{i \in\{ 1,2,4\}} \Vert g_i \Vert_{\QM} \gg \delta^5.\]
Since $\Vert f_i \Vert_{\QM} =\Vert g_i \Vert_{\QM}$, we have
\[ \inf_{i \in\{ 1,2,4\}} \Vert f_i \Vert_{\QM} \gg \delta^5.\]
This is precisely \eqref{to-show-final}, so the proof is concluded.
\end{proof}

To conclude this section we state Proposition \ref{gvn-nonlinear} in a qualitatively equivalent form, more useful for our later applications.

\begin{corollary}\label{gvn-cor}
There is a monotonically increasing function $\nu:(0,1] \rightarrow (0,1]$ with the following property. Suppose $h_1,h_2,h_3,h_4 : \F \rightarrow \C$ are such that $\|h_1\|_\infty,\|h_2\|_\infty,\|h_3\|_\infty,\|h_4\|_\infty \leq 1$ and $T(h_1,h_2,h_3,h_4) \geq \delta$. Then either $p \leq 1/\nu(\delta)$ or $\Vert h_i \Vert_{\QM} \geq \nu(\delta)$ for all $i \in \{ 1,2,3,4\}$.
\end{corollary}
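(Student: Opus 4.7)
The plan is to derive the corollary directly from Proposition \ref{gvn-nonlinear} by unwinding the $O(\cdot)$ and splitting into cases depending on which term dominates the maximum. Since Proposition \ref{gvn-nonlinear} already encapsulates all of the real work, this step is essentially bookkeeping to convert the quantitative bound into the clean qualitative form used in later sections.

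Concretely, let $C$ be an absolute constant for which Proposition \ref{gvn-nonlinear} gives
\[ |T(h_1, h_2, h_3, h_4)| \leq C \inf_{i} \max\bigl(p^{-1/64}, \|h_i\|_{\QM}^{1/5}\bigr). \]
Assuming $T(h_1,h_2,h_3,h_4) \geq \delta$, we conclude that for \emph{every} $i \in \{1,2,3,4\}$,
\[ \max\bigl(p^{-1/64}, \|h_i\|_{\QM}^{1/5}\bigr) \geq \delta/C.\]
Then I would split into two cases: if the maximum is achieved by $p^{-1/64}$ for some $i$, then $p \leq (C/\delta)^{64}$ and the first alternative ``$p \leq 1/\nu(\delta)$'' holds; otherwise, for every $i$ we have $\|h_i\|_{\QM}^{1/5} \geq \delta/C$, i.e.\ $\|h_i\|_{\QM} \geq (\delta/C)^5$, and the second alternative holds.

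Setting $\nu(\delta) := (\delta/C)^{64}$ (or, to guarantee $\nu$ maps into $(0,1]$ without additional care, $\nu(\delta) := \min(1, (\delta/C)^{64})$) handles both alternatives uniformly: it is visibly monotonically increasing on $(0,1]$, and it satisfies both $\nu(\delta) \leq (\delta/C)^5$ (so that the QM-norm lower bound in the second case implies $\|h_i\|_{\QM} \geq \nu(\delta)$) and $1/\nu(\delta) \geq (C/\delta)^{64}$ (so that in the first case $p \leq 1/\nu(\delta)$).

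There is no genuine obstacle here; the only thing to be slightly careful about is that the quantifier in Proposition \ref{gvn-nonlinear} is $\inf_i$, so the hypothesis $|T|\geq \delta$ immediately forces every $\max(p^{-1/64}, \|h_i\|_{\QM}^{1/5})$ (not just one of them) to be large. Once this is noted, choosing any power function of $\delta$ with exponent at least $\max(5,64) = 64$ and a suitable constant yields the required $\nu$.
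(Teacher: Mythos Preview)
Your proposal is correct and is exactly the approach the paper has in mind: the paper does not spell out a proof at all, merely stating the corollary as a qualitatively equivalent reformulation of Proposition~\ref{gvn-nonlinear} and remarking afterwards that $\nu(\delta)$ may be taken of the shape $c\delta^C$. Your unwinding of the $O(\cdot)$ constant and choice $\nu(\delta)=\min(1,(\delta/C)^{64})$ is precisely such a function, so there is nothing to add.
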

In fact, $\nu(\delta)$ can be chosen to have the shape $\nu(\delta) \sim c \delta^C$.

\section[QM-systems and related concepts]{{$\QM$}-systems and related concepts}\label{sec3}

We begin by defining the notion of a $\QM$-system, an important concept in our paper. As is fairly standard, we write
 \[ S^1:=\{z \in \C: |z| = 1\}.\]
We shall also require a non-standard piece of notation: define 
\begin{equation*}
\G := \R/\Z \times \R/\Z \times S^1.
\end{equation*}
The group $\G$ is of course abelian, but it is convenient to use juxtaposition for the group operation; thus $(\theta_1, \theta_2, z) (\theta'_1, \theta'_2, z') = (\theta_1 + \theta'_1 , \theta_2 + \theta'_2, z z')$. We shall often be considering the product group $\G^d$, and we use the same convention there.
\begin{definition}
Let $d \in \N$. A $\QM$-system of dimension $d$ is a map $\Psi : \F \rightarrow \G^d$ of the form
\begin{equation*}
\Psi( x)=  (a_i x^2/p, 2a_i x/p, \psi_i(x))_{i = 1}^d,
\end{equation*}
where $a_i \in \F$ and $\psi_i \in \widehat{\F^*}$.  Recall from \S\ref{sec2} that we extend $\psi_i$ to all of $\F$ by setting $\psi_i(0) = 1$. 

Given a $\QM$-system $\Psi$ it will be helpful to write $\Psi' \supset \Psi$ to mean that $\Psi'$ extends $\Psi$ in the sense that $\Psi'$ has dimension $d' \geq d$ and $\Psi = \Pi \circ \Psi'$ where $\Pi:\G^{d'} \rightarrow \G^d$ is the projection onto the first $d$ co-ordinates.
\end{definition}

An important fact for us is that the ``orbit'' $\{ \Psi(x) : x \in \F\}$ is highly equidistributed inside a certain closed subgroup of $\G^d$. To explain what this group is, we need to make some definitions. Let $\Psi$ be a $\QM$-system as above. Then we associate to $\Psi$ the sublattices of $\Z^d$
\[ \Lambda_{\Psi}^+ := \{ \xi \in \Z^d : \xi_1 a_1 + \dots + \xi_d a_d \equiv 0 \pmod{p}\}\] and
\[ \Lambda_{\Psi}^{\times} := \{ \xi \in \Z^d: \psi_1^{\xi_1} \cdots \psi_d^{\xi_d} \; \mbox{is the trivial character}\}.\]
Note, incidentally, that $p \Z^d \subset \Lambda_{\Psi}^+$ and $(p-1)\Z^d \subset \Lambda_{\Psi}^{\times}$, and so both $\Lambda_{\Psi}^+$ and $\Lambda_{\Psi}^{\times}$ have full rank.
With these lattices defined we introduce the closed subgroups
\[ G_{\Psi}^+ := \{ g \in (\R/\Z)^d : \xi \cdot g = 0 \; \mbox{for all $\xi \in \Lambda_{\Psi}^{+}$}\}\]
and 
\[ G_{\Psi}^{\times} := \{ z \in (S^1)^d : z^{\xi} = 1 \; \mbox{for all $\xi \in \Lambda_{\Psi}^{\times}$}\},\] where here $\xi \cdot g := \xi_1 g_1 + \dots + \xi_d g_d$ and $z^{\xi} := z_1^{\xi_1} \cdots z_d^{\xi_d}$. These two groups $G_{\Psi}^+$ and $G_{\Psi}^{\times}$ are both closed subgroups of compact groups ($(\R/\Z)^d$ and $(S^1)^d$ respectively) and as such they carry natural Haar probability measures $\mu_{G_{\Psi}^+}$ and $\mu_{G_{\Psi}^{\times}}$. Define
\[ H_{\Psi} := G_{\Psi}^+ \times G_{\Psi}^+ \times G_{\Psi}^{\times},\] and put the natural probability measure
\[ \mu_{H_{\Psi}} := \mu_{G_{\Psi}^+} \times \mu_{G_{\Psi}^+} \times \mu_{G_{\Psi}^{\times}}\] on this group. By abuse of notation, we regard this as a probability measure on $\G^d$ as well (which is permissible, as $H_{\Psi}$ is a subgroup of $\G^d$). Note that $\Psi(\F) \subset H_{\Psi}$. It turns out that $\Psi(\F)$ is close to being equidistributed in $H_{\Psi}$. To formulate this fact in a convenient form (Proposition \ref{distribution} below), we need a further definition.

\begin{definition}[Trig norm]\label{deftrig}
Let $F : \G^d \rightarrow \C$ be a function. We write $\Vert F \Vert_{\trig}$ for the smallest $M \in [0,\infty]$ such that $F$ has a Fourier expansion 
\[ F(\theta_1, \theta_2, z) = \sum_{\|\xi_1\|_1, \|\xi_2\|_1, \|\xi_3\|_1 \leq M} \hat{F}(\xi_1, \xi_2,\xi_3) e(\xi_1 \cdot \theta_1 + \xi_2 \cdot \theta_2) z^{\xi_3}\] with $\sum_{\xi_1,\xi_2,\xi_3} |\hat{F}(\xi_1, \xi_2,\xi_3)| \leq M$.  
\end{definition}

We call this a norm, although ``measure of complexity'' would be more accurate. We have 
\begin{equation} \label{subadd} \Vert F_1 + F_2 \Vert_{\trig} \leq \Vert F_1 \Vert_{\trig} + \Vert F_2 \Vert_{\trig},\end{equation}
\begin{equation*}  \Vert F_1F_2 \Vert_{\trig} \leq \max\left\{\Vert F_1 \Vert_{\trig} + \Vert F_2 \Vert_{\trig},\|F_1\|_{\trig}\|F_2\|_{\trig}\right\},\end{equation*}
and
\begin{equation}\label{scale}\Vert \lambda F \Vert_{\trig} \leq \max(1, |\lambda|) \Vert F \Vert_{\trig} \text{ for all } \lambda \in \C,\end{equation}
as well as the shift property that if we define $T_hF(g) := F(h^{-1}g)$ then 
\begin{equation*}
\Vert T_h F \Vert_{\trig} = \Vert F \Vert_{\trig} \text{ for all }h \in \G^d.
\end{equation*}
We call those functions $F : \G^d \rightarrow \C$ with finite trig norm \emph{trigonometric polynomials}. They are dense in $C(\G^d)$ (with the sup norm). This follows from the Stone-Weierstrass theorem: the trigonometric polynomials form an algebra and contain the characters, hence separate points. (This could also be established directly using harmonic analysis.)

We turn now to the promised equidistribution statement. We call it the ``baby counting lemma'' as it is a simpler cousin of one of the key ingredients of our work which we shall call the counting lemma.

\begin{proposition}[Baby counting lemma]\label{distribution}
Let $\Psi$ be a $\QM$-system of dimension $d$, and let $F : \G^d \rightarrow \C$ be a trigonometric polynomial. Then we have
\[ \E_{x \in \F} F(\Psi(x)) = \int F d\mu_{H_{\Psi}} + o_{p \rightarrow \infty}(\Vert F \Vert_{\trig}).\]
\end{proposition}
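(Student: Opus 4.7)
The plan is to reduce the statement to an estimate for a single Fourier character on $\G^d$ via the trigonometric expansion of $F$, and then to handle that single-character estimate by classical character sum bounds on $\F$.

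Writing $M := \|F\|_{\trig}$ and using the definition of the $\trig$-norm, expand
\[ F(\theta_1,\theta_2,z) = \sum_{\xi = (\xi_1,\xi_2,\xi_3)} c_\xi \, \chi_\xi(\theta_1,\theta_2,z), \qquad \chi_\xi(\theta_1,\theta_2,z) := e(\xi_1\cdot \theta_1 + \xi_2\cdot \theta_2)\, z^{\xi_3}, \]
with $\sum_\xi |c_\xi| \leq M$. By linearity and the triangle inequality it is enough to prove, for each single $\xi$ appearing in the expansion,
\[ \E_{x \in \F} \chi_\xi(\Psi(x)) = \int \chi_\xi \, d\mu_{H_\Psi} + O(p^{-1/2}); \]
summing against the $c_\xi$ then yields a total error of $O(Mp^{-1/2}) = o_{p\to\infty}(M)$, as required.

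Pulling $\chi_\xi$ back along $\Psi$ explicitly, set $\alpha := \xi_1 \cdot a \pmod p$, $\beta := 2(\xi_2 \cdot a) \pmod p$ and $\psi := \prod_{i=1}^d \psi_i^{(\xi_3)_i}$ (extended by $\psi(0) = 1$); then
\[ \chi_\xi(\Psi(x)) = e_p(\alpha x^2 + \beta x)\,\psi(x). \]
Because $G_\Psi^+$ and $G_\Psi^\times$ are precisely the annihilators of $\Lambda_\Psi^+$ and $\Lambda_\Psi^\times$ respectively, the character $\chi_\xi$ is trivial on $H_\Psi$ if and only if $\xi_1,\xi_2 \in \Lambda_\Psi^+$ and $\xi_3 \in \Lambda_\Psi^\times$, equivalently $\alpha = 0$, $\beta = 0$ in $\F$ and $\psi$ is the trivial character on $\F^*$. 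In this trivial case $\chi_\xi \circ \Psi \equiv 1$ (using $\psi(0) = 1$), so $\E_x \chi_\xi(\Psi(x)) = 1 = \int \chi_\xi \, d\mu_{H_\Psi}$ (the latter by orthogonality of Haar characters on the compact group $H_\Psi$), and there is nothing to prove.

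In the non-trivial case $\int \chi_\xi\, d\mu_{H_\Psi}=0$ by orthogonality, and we must show $|S| = O(p^{-1/2})$ for $S := \E_{x\in\F} e_p(\alpha x^2 + \beta x)\psi(x)$. The $x=0$ term contributes $O(1/p)$; the remaining sum over $x \in \F^*$ is a mixed Gauss/character sum of the type treated at the beginning of \S\ref{sec8}. If $\alpha = \beta = 0$ and $\psi$ is non-trivial, this is just orthogonality of multiplicative characters; if $\alpha = 0$ and $\beta \neq 0$, the additive character orthogonality (twisted by $\psi$) yields either $0$ or a Gauss sum of modulus $\sqrt{p}$; and if $\alpha \neq 0$ we complete the square and either obtain a classical Gauss sum (when $\psi$ is trivial) or invoke the Weil bound (when $\psi$ is non-trivial). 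In every sub-case $|S| \ll p^{-1/2}$, finishing the proof.

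The main (minor) obstacle is bookkeeping: verifying that the triviality conditions on $\chi_\xi$ on the three factors of $H_\Psi$ match up exactly with the vanishing conditions $\alpha=0$, $\beta=0$, $\psi=1$ which guarantee the $\F$-average is $1$ rather than $O(p^{-1/2})$. Once this annihilator/character correspondence is in place, the argument is pure orthogonality plus the standard mixed character sum estimate, and the dependence on $\|F\|_{\trig}$ enters only through the final $\ell^1$-sum over Fourier coefficients.
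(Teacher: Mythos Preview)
Your proof is correct and follows essentially the same route as the paper's: expand $F$ in characters of $\G^d$, reduce to a single-character estimate, and then match the triviality condition $\xi_1,\xi_2\in\Lambda_\Psi^+$, $\xi_3\in\Lambda_\Psi^\times$ with the vanishing of the quadratic-multiplicative sum on $\F$. The only cosmetic differences are that the paper packages the character-sum bound into Proposition~\ref{add-mult} (rather than your case analysis via Gauss sums and Weil) and proves the double-annihilator fact $\big(\text{$\chi_\xi|_{H_\Psi}$ trivial} \Leftrightarrow \xi_1,\xi_2\in\Lambda_\Psi^+,\ \xi_3\in\Lambda_\Psi^\times\big)$ explicitly in Lemma~\ref{duality-fact} and the orthogonality relations~\eqref{orth-plus}--\eqref{orth-times}, whereas you invoke it as standard Pontryagin duality.
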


The proof of Proposition \ref{distribution} relies on estimates for character sums twisted by quadratic phases. We recall what we need on this topic at the beginning of \S\ref{sec8}, and give the proof of Proposition \ref{distribution} later in that same section. 

It is useful to have some ``absolute values'' associated to the groups we are interested in.  For $\theta \in \R/\Z$ we write
\begin{equation}\label{eqn.nj}
| \theta |:=\|\theta\|_{\R/\Z}:=\min\{|\delta| : \delta \in \R, \theta + \delta \in \Z\}.
\end{equation}
Since $S^1$ already has a notion of absolute value inherited from $\C$, we have to be slightly careful and for $z \in S^1$ write
\begin{equation*}
\|z\|_{S^1}:=\left\|\frac{1}{2\pi i}\log z\right\|_{\R/\Z}:=\left| \frac{1}{2\pi i}\log z\right|,
\end{equation*}
where $\frac{1}{2\pi i}\log z$ naturally takes values in $\R/\Z$ and so we can use the notation in (\ref{eqn.nj}).  These combine in the obvious way for $\G$ so that
\[ |(\theta_1,\theta_2,z)| :=  \max \{|\theta_1|,|\theta_2|,\|z\|_{S^1}\}.\]
Finally for $(\R/\Z)^d$ and $\G^d$ we put
\begin{equation*}
| (g_1,\dots, g_d) | := \max_i |g_i|.
\end{equation*}
Thus $|\cdot|$ is used in several different ways in our paper and in any given situation its meaning must be inferred from context. This should not be difficult.  

The last piece of notation we need is for boxes in $\G^d$: given $\eps > 0$ define
\begin{equation}
\label{eqn.box}
X(\eps) := \{x \in \G^d : |x| \leq \eps\}.
\end{equation}
The group $H_{\Psi}$ may look rather complicated with respect to $|\cdot |$ (on $\G^d$).  (Informally, it may ``wind around'' $\G^d$ a very large number of times). However, we do at least have some control, as shown in Lemma \ref{box-pigeon} below. We deduce that lemma from the following more general fact which will be useful in \S\ref{sec7}.
\begin{lemma}\label{pigeon-projection}
Suppose that $G$ is a compact abelian group with Haar measure $\mu_G$, and that $\pi : G \rightarrow (\R/\Z)^d$ is a homomorphism. Then $\mu_G (\{ x : |\pi(x)| \leq \delta\}) \geq \delta^d$.
\end{lemma}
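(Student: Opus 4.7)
The plan is to combine a Fubini/averaging argument with the observation that $X(\delta/2) - X(\delta/2) \subset X(\delta)$, a sort of ``Bogolyubov trick''. I will assume throughout that $\delta \leq 1$, since otherwise $\delta^d > 1$ and the inequality is vacuous as $\mu_G$ is a probability measure. Under this assumption, the Haar measure of $X(\delta/2)$ in $(\R/\Z)^d$ is exactly $\delta^d$.

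First, by Fubini applied to $(g,t) \mapsto 1_{X(\delta/2)}(\pi(g)-t)$ on $G \times (\R/\Z)^d$ (with Haar measure on each factor), and the translation invariance of Haar measure on $(\R/\Z)^d$, one gets
\[ \int_{(\R/\Z)^d} \mu_G\bigl(\pi^{-1}(t + X(\delta/2))\bigr)\, dt \;=\; \int_G \delta^d \, d\mu_G \;=\; \delta^d. \]
Since the outer integral is against a probability measure, pigeonholing over $t$ produces some $t_0$ with
\[ \mu_G\bigl(\pi^{-1}(t_0 + X(\delta/2))\bigr) \geq \delta^d. \]

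The remaining step is a standard translate. Because this preimage has positive measure it is nonempty, so pick any $g_0$ in it; then $\pi(g_0) \in t_0 + X(\delta/2)$. For every $g$ in the same preimage,
\[ \pi(g - g_0) = \pi(g) - \pi(g_0) \in X(\delta/2) - X(\delta/2) \subset X(\delta), \]
which says $g \in g_0 + \pi^{-1}(X(\delta))$. Hence $\pi^{-1}(t_0 + X(\delta/2)) \subset g_0 + \pi^{-1}(X(\delta))$, and translation invariance of $\mu_G$ gives $\mu_G(\pi^{-1}(X(\delta))) \geq \mu_G(\pi^{-1}(t_0+X(\delta/2))) \geq \delta^d$, as required.

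I do not foresee any real obstacle: once one notices that the correct move is to Fubini against translates of the \emph{half}-sized box (which produces the factor $\delta^d$ on the right) and then to compensate via $X(\delta/2) - X(\delta/2) \subset X(\delta)$, everything reduces to Haar translation invariance and is routine bookkeeping.
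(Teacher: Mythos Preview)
Your proof is correct and is essentially the same argument as the paper's: a Fubini/averaging step to find a translate of a box of side length $\delta$ whose preimage has measure at least $\delta^d$, followed by the observation that differences of elements in such a box land in $\{|\cdot|\leq\delta\}$, combined with translation invariance of $\mu_G$. The only cosmetic difference is that the paper uses one-sided boxes $\prod_i[\theta_i,\theta_i+\delta]$ rather than your centred boxes $X(\delta/2)$; both have measure $\delta^d$ and both satisfy the required difference-set containment.
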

\begin{proof}
For $\theta \in (\R/\Z)^d$, write $B_{\delta}(\theta) := \prod_{i = 1}^d [\theta_i, \theta_i + \delta]$. For any fixed $x \in (\R/\Z)^d$ we have
\[ \int 1_{B_{\delta}(\theta)}(x) d\theta = \delta^d.\]
Taking $x = \pi(g)$ and integrating over $G$, we obtain
\[ \int 1_{B_{\delta}(\theta)} (\pi(g))d\mu_G(g) d\theta = \delta^d.\]
Hence there is some $\theta$ such that 
\[ \mu_G (\{ g : \pi(g) \in B_{\delta}(\theta)\}) \geq \delta^d.\]
Write $S := \{ g : \pi(g) \in B_{\delta}(\theta)\}$, thus $\mu_G(S) \geq \delta^d$.
Note that if $g_1, g_2 \in S$ then $|\pi(g_1 - g_2) | \leq \delta$. Thus the result follows from the fact that $\mu_G(S - S) \geq \mu_G(S - s) = \mu_G(S)$, where $s \in S$ is any element. 
\end{proof}
Note that this proof is really just the same as that of \cite[Lemma 4.19]{taovu}.

\begin{lemma}\label{box-pigeon}
We have $\mu_{H_{\Psi}}(X(\eps)) \geq \eps^{3d}$.
\end{lemma}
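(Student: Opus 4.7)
My plan is to reduce this directly to Lemma \ref{pigeon-projection} via the product structure of $\mu_{H_\Psi}$. First, I would unpack the definitions. Writing an element of $\G^d$ as $(\theta_1, \theta_2, z)$ with $\theta_1, \theta_2 \in (\R/\Z)^d$ and $z \in (S^1)^d$, the box becomes
\[ X(\eps) = \bigl\{(\theta_1,\theta_2,z) : |\theta_1| \leq \eps,\ |\theta_2| \leq \eps,\ \|z\|_{S^1} \leq \eps\bigr\},\]
so that
\[ X(\eps) \cap H_\Psi = B^+(\eps) \times B^+(\eps) \times B^\times(\eps),\]
where $B^+(\eps) := G_\Psi^+ \cap \{\theta \in (\R/\Z)^d : |\theta| \leq \eps\}$ and $B^\times(\eps) := G_\Psi^\times \cap \{z \in (S^1)^d : \|z\|_{S^1} \leq \eps\}$. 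Since $\mu_{H_\Psi}$ is the product measure $\mu_{G_\Psi^+} \times \mu_{G_\Psi^+} \times \mu_{G_\Psi^\times}$, Fubini gives
\[ \mu_{H_\Psi}(X(\eps)) = \mu_{G_\Psi^+}(B^+(\eps))^2 \cdot \mu_{G_\Psi^\times}(B^\times(\eps)).\]

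Next, I would bound each factor below by $\eps^d$ using Lemma \ref{pigeon-projection}. For the additive factor, take $G = G_\Psi^+$ and $\pi : G_\Psi^+ \hookrightarrow (\R/\Z)^d$ the inclusion; the lemma immediately yields $\mu_{G_\Psi^+}(B^+(\eps)) \geq \eps^d$. For the multiplicative factor, I use the canonical topological group isomorphism $\log/(2\pi i) : (S^1)^d \to (\R/\Z)^d$, restricted to $G_\Psi^\times$, as the homomorphism $\pi$. By definition of $\|\cdot\|_{S^1}$, the preimage of the box of radius $\eps$ in $(\R/\Z)^d$ under $\pi$ is exactly $B^\times(\eps)$, so Lemma \ref{pigeon-projection} gives $\mu_{G_\Psi^\times}(B^\times(\eps)) \geq \eps^d$ as well.

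Multiplying the three bounds yields $\mu_{H_\Psi}(X(\eps)) \geq \eps^d \cdot \eps^d \cdot \eps^d = \eps^{3d}$, as required. There is no real obstacle here — the only point requiring a moment's thought is the packaging step, namely verifying that $X(\eps) \cap H_\Psi$ factorises as a product of the three boxes (which is immediate from how $|\cdot|$ on $\G^d$ was defined as the coordinatewise max, separately in each of the three factors).
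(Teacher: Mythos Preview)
Your proof is correct and uses essentially the same idea as the paper. The only difference is organizational: the paper applies Lemma \ref{pigeon-projection} once, taking $G = H_\Psi$ and $\pi$ the restriction to $H_\Psi$ of the natural homomorphism $\G^d \to (\R/\Z)^{3d}$ (sending $(\theta_1,\theta_2,z)$ to $(\theta_1,\theta_2,\tfrac{1}{2\pi i}\log z)$), which immediately gives $\eps^{3d}$; you instead factor $\mu_{H_\Psi}$ as a product and apply the lemma three times to get $\eps^d\cdot\eps^d\cdot\eps^d$.
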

\begin{proof}
Apply the previous lemma with $\pi$ being the restriction to $H_{\Psi}$ of the natural homomorphism from $\G^d$ to $(\R/\Z)^{3d}$.
\end{proof}

A useful corollary of Proposition \ref{distribution} and Lemma \ref{box-pigeon} is the following assertion.

\begin{corollary}\label{cor3.3}
There is a function $p_1:\Z_{\geq 0} \times (0,1] \rightarrow \N$ with $p_1(d',\eps') \geq p_1(d,\eps)$ whenever $\eps' \leq \eps$ and $d' \geq d$, such that if $p \geq p_1(d,\eps)$ then the following is true.

For every $d$-dimensional $\QM$-system $\Psi$ and $h \in H_{\Psi}$ we have
\[ \mu_{\F} (\{x : |h^{-1}\Psi(x)| \leq \eps\})  \geq \frac{1}{8}\left(\frac{\eps}{4}\right)^{3d}.\]
\end{corollary}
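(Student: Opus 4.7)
The plan is to deduce this from Proposition \ref{distribution} (the baby counting lemma) applied to a trigonometric polynomial test function $F : \G^d \to \R$ that sandwiches between $1_{X(\eps/2)}$ and $1_{X(\eps)}$ up to a small error $\eta$. The case $d = 0$ is trivial since $\G^0$ is a point, so I would assume $d \geq 1$. Since $|\cdot| : \G^d \to [0, \infty)$ is continuous, the function $g(w) := \max(0, \min(1, 2 - 2|w|/\eps))$ is a continuous bump with $1_{X(\eps/2)} \leq g \leq 1_{X(\eps)}$. Since the trigonometric polynomials form a unital subalgebra of $C(\G^d)$ that separates points (as noted after Definition \ref{deftrig}), Stone-Weierstrass produces, for $\eta := \tfrac{1}{4}(\eps/4)^{3d}$, a real trigonometric polynomial $F$ with $\|F - g\|_{\infty} \leq \eta$. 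The crucial point is that $M := \|F\|_{\trig}$ is finite and depends only on $d$ and $\eps$, since the construction is intrinsic to $\G^d$ and knows nothing about $p$.

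Next I would invoke the shift property $\|T_h F\|_{\trig} = M$ and translation-invariance of the Haar measure $\mu_{H_\Psi}$ (which applies because $h \in H_\Psi$) in order to apply Proposition \ref{distribution} to $T_h F$, obtaining
\begin{equation*}
\E_{x \in \F} F(h^{-1}\Psi(x)) = \int F \, d\mu_{H_\Psi} + o_{p \to \infty}(M).
\end{equation*}
The pointwise inequalities $1_{X(\eps/2)} - \eta \leq F \leq 1_{X(\eps)} + \eta$, together with Lemma \ref{box-pigeon}, yield
\begin{equation*}
\int F \, d\mu_{H_\Psi} \geq (\eps/2)^{3d} - \eta \quad \text{and} \quad \E_{x \in \F} F(h^{-1}\Psi(x)) \leq \mu_{\F}(\{x : |h^{-1}\Psi(x)| \leq \eps\}) + \eta.
\end{equation*}
Rearranging gives $\mu_\F(\{x : |h^{-1}\Psi(x)| \leq \eps\}) \geq (\eps/2)^{3d} - 2\eta - o_{p \to \infty}(M)$.

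With $\eta = \tfrac{1}{4}(\eps/4)^{3d}$, the main term is $(2^{3d} - \tfrac{1}{2})(\eps/4)^{3d} \geq \tfrac{15}{2}(\eps/4)^{3d}$ for $d \geq 1$. Defining $p_1(d, \eps)$ to be any integer large enough that the $o_{p \to \infty}(M)$ error is at most $\tfrac{1}{4}(\eps/4)^{3d}$ whenever $p \geq p_1(d, \eps)$ then yields $\mu_\F(\cdots) \geq \tfrac{1}{8}(\eps/4)^{3d}$ with plenty of slack. The required monotonicity of $p_1$ is enforced a posteriori by replacing $p_1(d, \eps)$ with $\max\{p_1(d', \eps') : d' \leq d,\, \eps' \geq \eps\}$. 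I do not foresee any real obstacle: the Stone-Weierstrass approximation and the Haar-invariance calculation are the only ingredients beyond the baby counting lemma and the volume bound of Lemma \ref{box-pigeon}.
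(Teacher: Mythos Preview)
Your argument is essentially the same as the paper's: build a trigonometric bump sandwiched between $1_{X(\eps/2)}$ and $1_{X(\eps)}$ via Stone--Weierstrass, apply the baby counting lemma to its shift $T_hF$, use Haar-invariance on $H_\Psi$ and Lemma~\ref{box-pigeon} to lower-bound the integral, and then fix up monotonicity of $p_1$ at the end.

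One small technical point: your monotonicity step replaces $p_1(d,\eps)$ by $\max\{p_1(d',\eps') : d'\le d,\ \eps'\ge\eps\}$, but this is a maximum over the uncountable range $\eps'\in[\eps,1]$, and since the Stone--Weierstrass approximation gives no quantitative control on $\Vert F\Vert_{\trig}$ as a function of $\eps$, it is not clear this maximum exists. The paper avoids this by rounding to dyadic scales: it first proves the bound $\tfrac18(\eps/2)^{3d}$ for $p\ge p_0(d,\eps)$, then sets $p_1(d,\eps):=\max\{p_0(d',2^{-j}) : 2^{-j}\ge\eps/2,\ d'\le d\}$, a finite maximum, and absorbs the dyadic loss into the constant (whence the $\eps/4$ in the final statement). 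Your argument has more than enough slack to do the same.
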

\begin{proof}
We use the fact that the trigonometric polynomials are dense in $C(\G^d)$. Thus if $\eps > 0$ then there is some $F_0: \G^d \rightarrow \R$ such that 
\begin{itemize}
\item $F_0 \geq -\eps'$ pointwise, where $\eps' := \frac{1}{2}\left(\frac{\eps}{2}\right)^{3d}$;
\item $F_0 \leq 0$ outside $X(\eps)$, the box defined in (\ref{eqn.box});
\item $F_0 \geq 1$ on $X(\eps/2)$;
\item $F_0 \leq 2$ on $\G^d$ and
\item $\Vert F_0 \Vert_{\trig} = O_{\eps, d}(1)$.
\end{itemize}
Now apply Proposition \ref{distribution} with $F = T_hF_0$. We have $\Vert T_h F_0 \Vert_{\trig} = \Vert F_0 \Vert_{\trig} = O_{\eps, d}(1)$, uniformly in $h$. By Proposition \ref{distribution},
\[ \E_{x \in \F} T_hF_0(\Psi(x)) = \int T_h F_0 d\mu_{H_{\Psi}} + o_{p \rightarrow \infty}(\Vert F_0 \Vert_{\trig} ) = \int F_0 d\mu_{H_{\Psi}} + o_{\eps, d;p\rightarrow \infty}(1),\] the second equality being a consequence of the invariance of Haar measure.  Since $F_0 \geq 1$ on $X(\eps/2)$ and $F_0 \geq -\eps'$ everywhere, it follows from Lemma \ref{box-pigeon} that 
\[ \int F_0 d\mu_{H_{\Psi}} \geq \left(\frac{\eps}{2}\right)^{3d} - \eps' = \frac{1}{2}\left(\frac{\eps}{2}\right)^{3d}  .\] Thus 
\[ \E_{x \in \F} T_hF_0(\Psi(x)) \geq \frac{1}{2}\left(\frac{\eps}{2}\right)^{3d} - o_{\eps,d;p \rightarrow \infty}(1) \geq \frac{1}{4}\left(\frac{\eps}{2}\right)^{3d}\]
provided $p \geq p_0(d,\eps)$. However, $T_h F_0(y) \leq 0$ unless $h^{-1} y \in X(\eps)$ and $F_0 \leq 2$ pointwise, so $T_hF_0(y) \leq 2 \cdot 1_{X(\eps)}(h^{-1}y)$.  It follows that
\begin{equation*}
\mu_{\F} (\{x : |h^{-1}\Psi(x)| \leq \eps\})  \geq \frac{1}{8}\left(\frac{\eps}{2}\right)^{3d}
\end{equation*}
provided $p \geq p_0(d,\eps)$.  It remains to set
\begin{equation*}
p_1(d,\eps):=\max\left\{ p_0(d',2^{-j}): 2^{-j} \geq \eps/2, d' \leq d,\text{ and } d,j \in \Z_{\geq 0}\right\}.
\end{equation*}
This function $p_1$ is monotonic in the desired sense, and the result follows since there is some $j \in \Z_{\geq 0}$ with $\eps \geq 2^{-j} \geq \eps/2$ and
\begin{equation*}
\mu_{\F} (\{x : |h^{-1}\Psi(x)| \leq \eps\}) \geq \mu_{\F} (\{x : |h^{-1}\Psi(x)| \leq 2^{-j}\})  \geq \frac{1}{8}\left(\frac{\eps}{4}\right)^{3d}
\end{equation*}
whenever $p \geq p_1(d,\eps)$.
\end{proof}

\emph{Monotonic functions.}  By invoking the Stone-Weierstrass theorem we have taken a soft approach to approximating intervals by trigonometric polynomials in Corollary \ref{cor3.3}. This adds a slight technical complexity because the constant behind the $O_{\eps,d}(1)$ term in the proof could, in principle, depend in a very peculiar way on $\eps$ and $d$.  This complexity will crop up in other parts of the argument and we shall deal with it by ensuring that ``universal functions'' (in the above case $p_1$) are monotonic in a suitable sense.

We take the following convention: all our ``universal functions'' will be of the form $f:D_1\times \dots \times D_r \rightarrow D_0$ where the $D_i$s are one of the sets $(0,1],\Z_{\geq 0}, \N,\R_{\geq 0}$ or $\R_{\geq 1}$.  If $D_i=(0,1]$ then we write $x \preccurlyeq_i y$ whenever $x \leq y$ ; otherwise $x \preccurlyeq_i y$ whenever $x \geq y$.  We shall say that $f$ is \emph{monotone} if $f(x)  \preccurlyeq_0 f(y)$ whenever $x_i \preccurlyeq_i y_i$ for all $1 \leq i \leq r$.

The two places we have encountered monotonicity so far are in Corollary \ref{cor3.3} where $p_1$ is monotonic in the above sense and Corollary \ref{gvn-cor} where $\nu$ is monotonic.

Finally we come to a crucial definition in the paper.

\begin{definition}[$\QM$-Bohr set]
Suppose that $\Psi$ is a $\QM$-system of dimension $d$. Then we write $B(\Psi, \eps) := \Psi^{-1}(X(\eps))$, and call this a $\QM$-Bohr set of dimension $d$ and width $\eps$. That is, $B(\Psi, \eps) := \{x \in \F : |\Psi(x)| \leq \eps\}$. 
\end{definition}
It should not come as a surprise that we have an analogue of the usual lower bound for the size of Bohr sets \cite[Lemma 4.19]{taovu}.
\begin{lemma}\label{bohr-lower}
There is a monotonic function $\beta : \Z_{\geq 0} \times (0,1] \rightarrow (0,1]$ such that the following is true. For all $d$-dimensional $\QM$-systems $\Psi$ and parameters $\eps \in (0,1]$ we have
\[ \mu_\F(B(\Psi, \eps)) \geq \beta(d,\eps).\]
\end{lemma}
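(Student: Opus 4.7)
The plan is to reduce to Corollary \ref{cor3.3} when $p$ is large, and fall back on a trivial bound when $p$ is small. The key observation for the trivial bound is that since $\psi_i(0) = 1$ for each multiplicative character by the extension convention adopted in \S\ref{sec2}, we have $\Psi(0) = (0, 0, 1)_{i = 1}^d$, so $|\Psi(0)| = 0$ and hence $0 \in B(\Psi, \eps)$ for every $\eps \in (0,1]$. This yields the unconditional lower bound
\[ \mu_\F(B(\Psi, \eps)) \geq \frac{1}{p}, \]
which is useful exactly when $p$ is bounded by some function of $d$ and $\eps$.

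For the large-$p$ regime, I would apply Corollary \ref{cor3.3} with $h$ equal to the identity element of $H_\Psi$ (noting $\id \in H_\Psi$ since $H_\Psi$ is a subgroup of $\G^d$). This immediately gives
\[ \mu_\F(B(\Psi, \eps)) = \mu_\F(\{ x : |\Psi(x)| \leq \eps\}) \geq \frac{1}{8}\left(\frac{\eps}{4}\right)^{3d} \]
whenever $p \geq p_1(d,\eps)$.

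To unify the two regimes, I would set
\[ \beta(d,\eps) := \min\left\{ \frac{1}{8}\left(\frac{\eps}{4}\right)^{3d}, \, \frac{1}{p_1(d,\eps)} \right\}. \]
If $p \geq p_1(d,\eps)$ the bound follows from Corollary \ref{cor3.3}, dominating the first entry in the minimum, while if $p < p_1(d,\eps)$ the trivial bound $1/p > 1/p_1(d,\eps)$ handles the second entry.

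The remaining bookkeeping is to verify monotonicity in the sense defined just before the statement: $\beta(d',\eps') \leq \beta(d,\eps)$ whenever $d' \geq d$ and $\eps' \leq \eps$. The expression $\frac{1}{8}(\eps/4)^{3d}$ is decreasing in $d$ (because $\eps/4 \leq 1/4 < 1$) and increasing in $\eps$, hence monotonic in the required sense; the factor $1/p_1(d,\eps)$ inherits monotonicity directly from the stated monotonicity of $p_1$; and the minimum of two monotonic functions is monotonic. There is no substantive obstacle here — the statement is essentially a repackaging of Corollary \ref{cor3.3} together with the trivial contribution of $x = 0$, and the only real work is ensuring the constants are arranged to give a genuinely monotone $\beta$ for the applications in later sections.
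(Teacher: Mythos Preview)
Your proof is correct and follows essentially the same route as the paper: apply Corollary \ref{cor3.3} with $h$ the identity for $p \geq p_1(d,\eps)$, use the trivial observation $0 \in B(\Psi,\eps)$ to handle small $p$, and take $\beta(d,\eps) = \min\{\frac{1}{8}(\eps/4)^{3d}, 1/p_1(d,\eps)\}$. Your treatment of the monotonicity check is in fact more explicit than the paper's.
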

\begin{proof}
This is immediate from Corollary \ref{cor3.3} by taking $h = \id_{\G^d} = ((0,0,1), \dots , (0,0,1))$, the identity element in $\G^d$, and then putting
\begin{equation*}
\beta(d,\eps):=\min\left\{ \frac{1}{p_1(d,\eps)}, \frac{1}{8}\left(\frac{\eps}{4}\right)^{3d}\right\}.
\end{equation*}
Since $\id_{\G^d} \in B_{\QM}(\Psi, \eps)$ for all $\eps \in (0,1]$ the result follows.
\end{proof}

\section{Structure of the main argument }\label{sec4}

We turn now to a discussion of the basic form of the rest of the argument. The basic strategy is to work by induction on the number of colours, but to get this to work effectively we must establish a more general statement.  It may be useful to recall the notion of monotone function we are using from the discussion after Corollary \ref{cor3.3}.

\begin{proposition}\label{main-prop-refined}
There are monotonic functions $\eta,\zeta : (0,1] \times \Z_{\geq 0} \times \Z_{\geq 0} \rightarrow (0,1]$ and $p_0 : (0,1] \times \Z_{\geq 0} \times \Z_{\geq 0} \rightarrow \N$ with the following property. Suppose that $B \subset \F$ is a $\QM$-Bohr set of dimension $d$ and width $\delta$, and that we have an $r$-colouring $c: \tilde B \rightarrow [r]$ for some set $\tilde B \subset B$ with $|\tilde B| \geq (1 - \eta(\delta,d,r))|B|$. Then, provided $p\geq p_0(\delta,d,r)$ there are $\zeta(\delta,d,r)p^2$ pairs $(x, y)$ for which $x,y, x+y, xy \in \tilde B$ and $c(x) = c(y) = c(x+y) = c(xy)$.
\end{proposition}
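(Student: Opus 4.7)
\emph{Proof plan.} I would prove Proposition \ref{main-prop-refined} by induction on the number of colours $r$, with the base case $r = 1$ a pure quantitative counting statement and the inductive step driven by a $\QM$-norm dichotomy that either exhibits many quadruples directly or produces a density increment on a refined $\QM$-Bohr set.

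For the base case $r = 1$ I would lower-bound $p^2 T(1_{\tilde B}, 1_{\tilde B}, 1_{\tilde B}, 1_{\tilde B})$ by writing $1_{\tilde B} = 1_B - (1_B - 1_{\tilde B})$ and expanding. The main term $T(1_B, 1_B, 1_B, 1_B)$ should be computable via a bilinear ``counting lemma'' (the sum-product analogue of Proposition \ref{distribution} foreshadowed in \S\ref{sec3}), expressing it as a positive integral over $H_\Psi^2$ whose size is bounded below in terms of $(d, \delta)$ using Lemma \ref{bohr-lower}. Each of the $15$ error terms contains a factor $1_B - 1_{\tilde B}$, to which I would apply Corollary \ref{gvn-cor} in a single $i \in \{1,2,3,4\}$ slot; since $\Vert 1_B - 1_{\tilde B}\Vert_{\QM} \leq \Vert 1_B - 1_{\tilde B}\Vert_1 \leq \eta\mu_\F(B)$, these errors are killed provided $\eta(\delta, d, 1)$ is chosen small enough relative to the main term.

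For the inductive step assume the statement for $r - 1$ and pigeonhole a colour class $A \subset \tilde B$ with $|A| \geq (1 - \eta)|B|/r$. Either $p^2 T(1_A, 1_A, 1_A, 1_A) \geq \zeta_\star p^2$ for a chosen threshold $\zeta_\star$ (in which case all these quadruples are monochromatic and we are done), or the same expansion as in the base case together with Corollary \ref{gvn-cor} forces $\Vert 1_A - \mu_A 1_B \Vert_{\QM} \geq \nu_\star$ for some $\nu_\star > 0$; i.e.\ $1_A$ correlates on $B$ with some $\phi\chi$ for a $\phi \in Q(\F)$ and $\chi \in \widehat{\F^*}$. I would then extend $\Psi$ to a $\QM$-system $\Psi' \supset \Psi$ of dimension $d + 1$ by appending the coordinate encoding $\phi\chi$. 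Since $\phi\chi$ is essentially constant on each translate of $B' = B(\Psi', \delta')$, a pigeonhole over a partition of $B$ into $B'$-translates (quantified by Corollary \ref{cor3.3}) finds a translate $B' + t \subset B$ on which the density of $A$ deviates from $\mu_A$ by $\Omega(\nu_\star)$. Iterating this density increment $O(r/\nu_\star)$ times produces a further-refined $\QM$-Bohr set $B''$ of dimension $d + O(r)$ and width $\delta''$ on which some colour covers a $(1 - \eta(\delta'', d + O(r), r - 1))$-fraction; its complement is an $(r-1)$-coloured $(1 - \eta)$-dense subset of $B''$ to which the inductive hypothesis applies, producing the required monochromatic quadruples (which transfer back to $\F$ at the cost of a $\mu_\F(B'')^2$ factor controlled by Lemma \ref{bohr-lower}).

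The main obstacle will be parameter tracking. Each induction step contracts $\delta$ by a factor depending polynomially on $\nu_\star(\delta, d, r)$ (and hence on $\zeta_\star$), bumps $d$ by $O(r)$, and requires $p$ to exceed a function of all three. One must solve the resulting recursion for $\eta, \zeta$ and $p_0$ so that all three are monotonic in the sense defined after Corollary \ref{cor3.3}. A delicate point is the simultaneous choice of the inner threshold $\zeta_\star$: it must be at least $\zeta(\delta, d, r)$ so the ``yes'' branch of the dichotomy closes the induction, yet small enough that the ``no'' branch yields a usable density increment, essentially forcing $\zeta_\star \asymp \zeta(\delta, d, r)$. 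Constructing the correct monotone $\zeta$ (and compatible $\eta$, $p_0$) under this fixed-point constraint — especially verifying monotonicity in $d$ even though $d$ \emph{grows} under refinement — is the most delicate piece of bookkeeping.
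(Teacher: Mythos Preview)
Your plan diverges from the paper's proof and has a genuine gap in the density increment step.

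The paper does \emph{not} run a density increment on a single popular colour class. Instead it applies the regularity lemma (Proposition \ref{regularity}) once, with a carefully chosen growth function $\Omega$, to approximate \emph{every} colour simultaneously by $F_i\circ\Psi'$ on a refined $\QM$-system $\Psi'\supset\Psi$. Then it sets $B_i:=B(\Psi',\delta_i)$ for small $\delta_i$ and splits into two cases: either some colour has tiny mass on $B_i$, in which case the remaining $r-1$ colours form an almost-colouring of the genuine $\QM$-Bohr set $B_i$ and the $(r-1)$ hypothesis applies; or all colours are well represented on the $B_i$, and then the combined counting+Ramsey result (Proposition \ref{prop99}) produces a colour $i$ with $T(F_i\circ\Psi',1_{S_i},F_i\circ\Psi',F_i\circ\Psi')$ large, which is transferred back to $T(1_{c^{-1}(i)},\dots)$ via Lemma \ref{simple-lem} and Corollary \ref{gvn-cor}. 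The Ramsey lemma (Proposition \ref{ramsey-prop}) is essential here precisely because there is no density version of the theorem, as the paper explicitly notes.

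Your increment cannot be carried out as written. From $\|1_A-\alpha 1_B\|_{\QM}\geq\nu_\star$ you correctly get correlation with some $\phi\chi$, and you propose to pigeonhole over ``$B'$-translates'' of $B$. But an additive translate $B'+t$ of a $\QM$-Bohr set is \emph{not} a $\QM$-Bohr set: the multiplicative component satisfies $\psi_i(x+t)\neq\chi(x)$ for any $\chi\in\widehat{\F^*}$, and even the quadratic component $a_i(x+t)^2/p$ does not reduce to the required form $a_i'x^2/p$. The natural alternative --- partitioning $B$ into level sets $\{x:\Psi'(x)\approx h\}$ of the extended system --- produces pieces of the form $\Psi'^{-1}(h\cdot X(\eps'))$, which are $\QM$-Bohr sets \emph{only} when $h=\id_{\G^{d+1}}$. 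There is no reason the density increment lands on that particular piece, and the problem is not translation-invariant (neither additively nor multiplicatively) in any way that would let you recentre. Consequently the iteration cannot be continued inside the class of $\QM$-Bohr sets, and the inductive hypothesis is inapplicable at the next step.

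There is also a smaller confusion in your endgame: if after iteration some colour has density $\geq 1-\eta$ on $B''$, its complement has density $\leq\eta$, not $\geq 1-\eta$, so you cannot feed the complement to the $(r-1)$ hypothesis. What you would actually do in that scenario is apply the $r=1$ case directly to the dominant colour; but you never reach that scenario because of the obstruction above. The paper's route --- regularity first, then Ramsey on the structured approximation --- is what makes the induction close on actual $\QM$-Bohr sets centred at $0$.
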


\emph{Remarks.} This immediately gives our main result. One may think of $c$ as an ``almost'' colouring (or more precisely a $(1 - \eta(\delta,d,r))$-almost $r$-colouring). The nature of our inductive arguments necessitates the consideration of almost colourings in addition to true colourings.

\begin{proof}[Proof of Theorem \ref{mainthm}]
Apply Proposition \ref{main-prop-refined} with $d=0$, $\delta=1/2$ and $\tilde B = B = \F$.  Since $(0,0,0+0,0 \cdot 0)$ is always monochromatic we see that every colouring contains at least $1$ monochromatic quadruple.  It follows that the total number of monochromatic quadruples is at least
\begin{equation*}
\min\big\{\frac{1}{p_0(\frac{1}{2},0,r)^2}, \zeta(\frac{1}{2},0,r)\big\}p^2 \gg_r p^2.
\end{equation*}
The result is proved.
\end{proof}

The proof of Proposition \ref{main-prop-refined} combines three fairly substantial pieces: a regularity lemma, a counting lemma, and a Ramsey-theory result. These three ingredients and their proofs can be understood more-or-less independently, the only common features being the language of \S\ref{sec3}. \vspace*{8pt}

\emph{Regularity lemma.} The basic principle of the regularity lemma is that it allows one to replace an arbitrary colouring of $\F$ by one that is induced from a ``nice'' colouring of $\G^d = \big( \R/\Z \times \R/\Z \times S^1 \big)^d$ by pullback under a $\QM$-map $\Psi : \F \rightarrow \G^d$.

This is a well-trodden idea which of course goes back to Szemer{\'e}di \cite{sze}.  Perhaps closer to our particular instance is the arithmetic development in \cite[Theorem 5.2]{green-reg}; the probabilistic framing in \cite[Theorem 2.11]{tao}; and the combination in \cite[Theorem 1.2]{green-tao-regularity}.

\begin{proposition}[Regularity lemma]\label{regularity}
Suppose that $\Omega : \Z_{\geq 0} \times \Z_{\geq 0} \times (0,1] \times \R_{\geq 0} \times \Z_{\geq 0} \rightarrow \R_{\geq 1}$ is monotonic.  Then there are monotonic functions $M_\Omega,D_\Omega$, and $p_\Omega$ mapping $\Z_{\geq 0} \times \Z_{\geq 0} \times (0,1] \times (0,1]$ to $\R_{\geq 1}$, $\Z_{\geq 0}$, and $\N$ respectively such that the following holds.

Suppose that $\Psi$ is a $d$-dimensional $\QM$-system of width $\delta$, $B:=B(\Psi,\delta)$, $\tilde B \subset B$ has $|\tilde B| \geq (1 - \frac{\eps^2}{100})|B|$ for some parameter $\eps \in (0,1]$, and $c : \tilde B \rightarrow [r]$ is an $r$-colouring of $\tilde B$.  Then there is a $\QM$-system $\Psi' \supset \Psi$ of some dimension $d'$, functions $F_1,\dots, F_r : \G^{d'} \rightarrow \R_{\geq 0}$, and functions $g_1,\dots, g_r : \F \rightarrow [-1,1]$, such that
\begin{enumerate}
\item $\Vert F_i \Vert_{\trig} \leq M_{\Omega}(r,d,\delta,\eps)$ for all $i \in\{ 1,\dots, r\}$;
\item $d' \leq D_{\Omega}(r,d,\delta,\eps)$;
\item $\Vert F_i \circ \Psi' - g_i  \Vert_2 \leq \eps$ for all $i \in\{ 1,\dots, r\}$;
\item $\Vert 1_{c^{-1}(i)} - g_i\Vert_{\QM} \leq 1/\Omega(r,d,\delta,\Vert F_i \Vert_{\trig}, d')$;
\item $\sum_{i = 1}^r F_i \circ \Psi' \geq 1$ pointwise on $B(\Psi, \frac{\delta}{2})$;
\item $\|F_i \circ \Psi'\|_{4} \leq 2$ for all $i \in \{1,\dots,r\}$;
\end{enumerate}
provided $p \geq p_\Omega(r,d,\delta,\eps)$.
\end{proposition}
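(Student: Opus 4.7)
The plan is to prove Proposition \ref{regularity} by an iterative energy-increment argument, in the spirit of Szemer\'edi's regularity lemma but adapted to the $\QM$-norm on $\F$. I would begin with $\Psi^{(0)} := \Psi$ of dimension $d^{(0)} := d$, and for each colour $i \in [r]$ let $F_i^{(0)}$ be the constant trigonometric polynomial equal to the density $\E 1_{c^{-1}(i)} \in [0,1]$.

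At stage $k$, set $\Omega_k := \Omega(r,d,\delta,\max_i\|F_i^{(k)}\|_{\trig},d^{(k)})$ and $\eta_k := (4\Omega_k)^{-1}$. If $\|1_{c^{-1}(i)} - F_i^{(k)} \circ \Psi^{(k)}\|_{\QM} \leq \eta_k$ for every $i$, stop. Otherwise there exist $i \in [r]$, a quadratic phase $\phi = e_p(r'x^2 + sx) \in Q(\F)$, and $\chi \in \widehat{\F^*}$ with $|\langle 1_{c^{-1}(i)} - F_i^{(k)} \circ \Psi^{(k)}, \phi\chi\rangle| > \eta_k$. I would then extend $\Psi^{(k)}$ to $\Psi^{(k+1)}$ by adjoining (when not already present) coordinates with $a = r'$, $a = s/2$, and $\psi = \chi$, so that $\phi\chi$ becomes the pullback through $\Psi^{(k+1)}$ of a trigonometric monomial $m$ of unit $\trig$-norm. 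Define $F_i^{(k+1)} := F_i^{(k)} + \lambda m$ with $\lambda := \langle 1_{c^{-1}(i)} - F_i^{(k)} \circ \Psi^{(k)}, \phi\chi\rangle$, and leave the other $F_j$ unchanged. A standard energy-increment computation, using Parseval together with the Gauss--Weil sum bound $|\langle \phi\chi,\phi'\chi'\rangle| = O(p^{-1/2})$ for distinct $\QM$-characters, yields
\[ \sum_i \|F_i^{(k+1)} \circ \Psi^{(k+1)}\|_2^2 \geq \sum_i \|F_i^{(k)} \circ \Psi^{(k)}\|_2^2 + \eta_k^2 - O(p^{-1/2}),\]
while the same orthogonality bounds the left-hand side by $r + O(p^{-1/2})$, so the process terminates.

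The main obstacle is that $\Omega_k$ depends on the output parameters $\|F_i^{(k)}\|_{\trig}$ and $d^{(k)}$, both of which grow during the iteration. This circularity is resolved by monotonicity of $\Omega$: since $\max_i\|F_i^{(k)}\|_{\trig}$ grows by at most $1$ and $d^{(k)}$ by at most $3$ per step, defining $M_\Omega, D_\Omega$ recursively as iterated compositions of $\Omega$ provides a priori control on the number of steps and on the final complexity. Upon termination, set $g_i := \max\{-1, \min\{1, F_i^{(k)} \circ \Psi^{(k)}\}\}$. Property (6) holds because $F_i \circ \Psi'$ is a sum of near-orthogonal $\QM$-characters with $\sum|\lambda|^2 = O(1)$, hence $\|F_i \circ \Psi'\|_4 \lesssim \|F_i \circ \Psi'\|_2 = O(1)$, after a final rescaling to enforce the constant $2$. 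Property (3) then follows from Chebyshev applied to $\{|F_i \circ \Psi'| > 1\}$ combined with the $L^4$-bound, provided $\eta_k$ is chosen small relative to $\eps$. Property (4) follows from the triangle inequality $\|1_{c^{-1}(i)} - g_i\|_{\QM} \leq \|1_{c^{-1}(i)} - F_i \circ \Psi'\|_{\QM} + \|F_i \circ \Psi' - g_i\|_2 \leq \eta_k + \eps$. Property (5) holds on $B(\Psi, \delta/2)$ because $\sum_i F_i \circ \Psi'$ is $\QM$-close to $\sum_i 1_{c^{-1}(i)} = 1_{\tilde B}$, which equals $1$ on almost all of $B(\Psi, \delta/2)$ by the density hypothesis $|\tilde B| \geq (1 - \eps^2/100)|B|$; a small common nonnegative bump added to each $F_i$ then secures the pointwise lower bound without damaging (1)--(4) and (6). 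Finally, $p \geq p_\Omega(r,d,\delta,\eps)$ is chosen large enough to absorb all the $O(p^{-1/2})$ error terms and to render the preceding approximations quantitatively valid via Proposition \ref{distribution}.
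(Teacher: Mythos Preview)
Your single--level energy increment does not terminate in general, and this is the essential gap. At step $k$ you set $\eta_k = (4\Omega_k)^{-1}$ with $\Omega_k = \Omega(r,d,\delta,\max_i\|F_i^{(k)}\|_{\trig},d^{(k)})$, and the energy gain per step is $\eta_k^2$. Since $\|F_i^{(k)}\|_{\trig}$ and $d^{(k)}$ grow with $k$ and $\Omega$ is monotone, $\eta_k$ \emph{shrinks} with $k$; for a rapidly growing $\Omega$ (say exponential in its last two arguments) one has $\sum_k \eta_k^2 < \infty$, so the energy never reaches the ceiling $r$ and the iteration runs forever. Your remark that ``monotonicity of $\Omega$ resolves the circularity'' is exactly backwards: monotonicity is what makes the threshold recede faster than the energy can catch up.

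Even granting termination, property~(4) fails for a separate reason. You define $g_i$ as the clamp of $F_i\circ\Psi'$ and bound $\|1_{c^{-1}(i)} - g_i\|_{\QM} \le \eta_k + \eps$, where the $\eps$ comes from the clamping error via $\|\cdot\|_{\QM}\le\|\cdot\|_2$. But $\eps$ is a fixed input parameter with no relation to $1/\Omega(r,d,\delta,\|F_i\|_{\trig},d')$, which may be vastly smaller; so the required bound cannot hold. (There is also the side issue that your $F_i$, being a sum of complex $\QM$-characters, is not $\R_{\ge 0}$-valued as the statement demands, and your fix for (5) via a ``bump'' is not made precise.)

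The paper resolves both problems by a \emph{two}-level structure. An inner Koopman--von Neumann step (Lemma~\ref{kvn}) works at a \emph{fixed} accuracy $\delta_j$ and hence terminates in $O(r\delta_j^{-2})$ steps, producing projections $P_j = \Pi_{R_j}^{\Psi_j}$. An outer pigeonhole over $J = O(r/\eps^2)$ such levels finds $j$ with $\|P_{j+1}f_i - P_jf_i\|_2 \le \eps/10$. One then takes $F_i$ to be a smooth nonnegative majorant of $P_jf_i$ (Lemma~\ref{lem5.4}, which also gives (5) and (6)) and, crucially, sets $g_i := P_{j+1}f_i + (1_{c^{-1}(i)} - 1_{\overline c^{-1}(i)})$. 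Thus $g_i$ is $L^2$-close to $F_i\circ\Psi'$ by the pigeonhole (giving (3)), while $\|1_{c^{-1}(i)} - g_i\|_{\QM} = \|f_i - P_{j+1}f_i\|_{\QM} \le \delta_{j+1}$, and $\delta_{j+1}$ was \emph{chosen in advance} to beat $1/\Omega$ evaluated at the step-$j$ complexity bounds (giving (4)). The decoupling of $g_i$ from $F_i\circ\Psi'$, together with the predictive choice $\delta_{j+1} \le 1/\Omega(\cdot, M_j^{\max}, d_j^{\max})$, is precisely what your proposal is missing.
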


The proof of the regularity lemma involves an ``energy increment'' argument of a type familiar to experts, but some of the technical details are a little tricky. The proof is given in \S \ref{sec5}.\vspace*{8pt}

\emph{Counting lemma.}  As usual we complement the regularity lemma with a counting lemma.  There is always a trade-off between the effort needed to prove a regularity lemma and its companion counting lemma; in our work the former contains essentially all of the difficulties.

The basic idea of the counting lemma is that if we have a colouring on $\F$ pulled back from a colouring of $\G^d$ under a $\QM$-map (as provided by the regularity lemma) then the number of monochromatic quadruples $x,y,x+y, xy$ in $\F$ is related to the number of monochromatic copies of a certain type of \emph{linear} configuration in $\G^d$: specifically, if $y$ is constrained to lie in a small $\QM$-Bohr set then $\Psi(x), \Psi(x+y), \Psi(xy)$ correspond to triples of the form $(t,u,v), (t + u', u, v'), (t', u', v)$. To get an idea of why this is so consider the case when $d=1$. In this case we can put
\begin{equation*}
\Psi:\F \rightarrow \G;x \mapsto (ax^2/p, 2ax/p,\chi(x)),
\end{equation*}
and the colouring of $\F$ is pulled back from a colouring of $\G$.  If $\Psi(y) \approx \id_{\G} = (0,0,1)$ then the remaining three points $\Psi(x)$, $\Psi(x+y)$, and $\Psi(xy)$ have constraints resulting from the identities
\begin{equation*}
x^2 + 2xy + y^2 = (x + y)^2,\;  x + y = (x + y) \; \text{ and } \; \chi(x)\chi(y)\;  \mbox{``$=$''}\; \chi(xy)
\end{equation*}
(where the quotation marks in the last are because our characters $\chi$ are not $0$ at $0$),
which imply that 
\[ \frac{ax^2}{p} + \frac{2axy}{p} \approx \frac{a(x + y)^2}{p},\;\; \frac{2ax}{p} \approx \frac{2a(x+y)}{p} \; \text{and} \; \chi(x) \approx \chi(xy).\]
  This tells us that $\Psi(x), \Psi(x+y), \Psi(xy)$ have approximately the form $(t,u,v)$, $(t + u',u,v')$, and $(t', u',v)$ respectively.  The counting lemma is a much stronger statement than this, essentially asserting that the above is the \emph{only} type of constraint that occurs.

\begin{proposition}[Counting lemma]\label{counting-lem}
Suppose $\Psi$ is a $d$-dimensional $\QM$-system, $F : \G^d \rightarrow \C$ is a trigonometric polynomial, and that $S \subset B(\Psi,\eps)$.  Then \begin{align*} & T(F \circ \Psi, 1_S, F \circ \Psi, F \circ \Psi) \\ & \qquad =   \mu_{\F}(S) \int F(t,u,v) F(t + u', u, v') F(t', u', v) d\mu_{H_{\Psi}}(t,u,v) d\mu_{H_{\Psi}}(t',u',v') \\ & \qquad  \qquad +   O(\epsilon \mu_{\F}(S)\Vert F \Vert_{\trig}^4) + o_{p \rightarrow \infty}(\Vert F \Vert_{\trig}^{9d}).
\end{align*}
\end{proposition}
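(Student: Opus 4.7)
The plan is to Fourier expand $F$ on $\G^d$ and reduce the counting lemma to classical Weil/Gauss character-sum estimates. Writing $F = \sum_{\xi} \hat F(\xi) \chi_\xi$ with $\chi_\xi(\theta_1,\theta_2,z) = e(\xi_1 \cdot \theta_1 + \xi_2 \cdot \theta_2) z^{\xi_3}$, the sum is finite with $\|\xi_j\|_1 \leq \|F\|_{\trig}$ on the support of $\hat F$ and $\sum |\hat F(\xi)| \leq \|F\|_{\trig}$. Substituting $\chi_\xi(\Psi(x)) = e_p(\alpha_\xi x^2 + \beta_\xi x) \varphi_\xi(x)$ with $\alpha_\xi := \xi_1 \cdot a$, $\beta_\xi := 2\xi_2 \cdot a$, $\varphi_\xi := \prod_i \psi_i^{(\xi_3)_i}$, and using $\varphi_{\xi''}(xy) = \varphi_{\xi''}(x)\varphi_{\xi''}(y)$ (with $xy=0$ contributing $O(1/p)$), the left-hand side decomposes as $\sum_{\xi,\xi',\xi''} \hat F(\xi) \hat F(\xi') \hat F(\xi'') I_{\xi,\xi',\xi''}(S) + O(\|F\|_{\trig}^3/p)$ where
\[ I_{\xi,\xi',\xi''}(S) = \E_y 1_S(y) \varphi_{\xi''}(y) e_p(\alpha_{\xi'} y^2 + \beta_{\xi'} y) \cdot J_y, \]
with $J_y := \E_x e_p(A_y x^2 + B_y x) \varphi_\xi(x) \varphi_{\xi''}(x) \varphi_{\xi'}(x+y)$, $A_y := \alpha_\xi + \alpha_{\xi'} + \alpha_{\xi''} y^2$, and $B_y := \beta_\xi + \beta_{\xi'} + (2\alpha_{\xi'} + \beta_{\xi''}) y$.

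Call a triple $(\xi,\xi',\xi'')$ \emph{resonant} if (i) $\xi_3' \in \Lambda^\times_\Psi$, (ii) $\xi_3 + \xi_3'' \in \Lambda^\times_\Psi$, (iii) $\xi_1 + \xi_1', \xi_1'' \in \Lambda^+_\Psi$ (equivalently $A_y \equiv 0$ as a polynomial in $y$), and (iv) $\xi_2 + \xi_2', \xi_1' + \xi_2'' \in \Lambda^+_\Psi$ (equivalently $B_y \equiv 0$). For resonant triples both multiplicative characters inside $J_y$ are trivial on $\F^*$, so $J_y = 1 + O(1/p)$. For non-resonant triples I invoke the mixed character-sum estimates recalled at the start of \S\ref{sec8}: either the quadratic phase $e_p(A_y x^2 + B_y x)$ is non-trivial in $x$, or at least one of the multiplicative characters $\varphi_\xi \varphi_{\xi''}$, $\varphi_{\xi'}$ is non-trivial, so in each case a Weil-type bound yields $|J_y| \ll p^{-1/2}$ for all but $O(1)$ exceptional $y$ (arising as roots of $A_y$ or $B_y$, polynomials in $y$ of degree $\leq 2$). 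This gives $|I_{\xi,\xi',\xi''}(S)| \ll \mu_\F(S) p^{-1/2} + p^{-1}$.

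To identify the main term I Fourier-expand the integral: the product $F(t,u,v) F(t+u',u,v') F(t',u',v)$ is a triple sum of characters of the form $e((\xi_1+\xi_1')\cdot t + (\xi_2+\xi_2')\cdot u + \xi_1''\cdot t' + (\xi_1'+\xi_2'')\cdot u') v^{\xi_3+\xi_3''} (v')^{\xi_3'}$, and integrating $(t,u,v), (t',u',v')$ independently over $H_\Psi$ kills all characters except those trivial on $H_\Psi \times H_\Psi$, which is precisely the resonance condition. Hence
\[ \int F(t,u,v) F(t+u',u,v') F(t',u',v) \, d\mu_{H_\Psi}(t,u,v)\, d\mu_{H_\Psi}(t',u',v') = \sum_{\text{resonant}} \hat F(\xi) \hat F(\xi') \hat F(\xi''). \]
For resonant triples and $y \in S \subset B(\Psi,\epsilon)$, the outer factor satisfies $|\varphi_{\xi''}(y) e_p(\alpha_{\xi'} y^2 + \beta_{\xi'} y) - 1| \ll (\|\xi_1'\|_1 + \|\xi_2'\|_1 + \|\xi_3''\|_1) \epsilon$: indeed $\alpha_{\xi'} y^2/p = \xi_1' \cdot (a_i y^2/p)_i$ has $\R/\Z$-distance from $0$ at most $\|\xi_1'\|_1 \epsilon$, and analogous bounds hold for $\beta_{\xi'} y/p$ and for $\varphi_{\xi''}(y)$ using the smallness of the individual entries of $\Psi(y)$. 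Hence $I_{\xi,\xi',\xi''}(S) = \mu_\F(S)\bigl(1 + O((\|\xi_1'\|_1 + \|\xi_2'\|_1 + \|\xi_3''\|_1)\epsilon)\bigr) + O(1/p)$ for resonant triples.

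Combining, the resonant contribution gives $\mu_\F(S)$ times the desired integral, plus an error of at most $\mu_\F(S) \epsilon \sum |\hat F(\xi)||\hat F(\xi')||\hat F(\xi'')|(\|\xi_1'\|_1 + \|\xi_2'\|_1 + \|\xi_3''\|_1) \ll \mu_\F(S) \epsilon \|F\|_{\trig}^4$, using the bound $\sum_{\xi'}|\hat F(\xi')| \|\xi_j'\|_1 \leq \|F\|_{\trig}^2$ afforded by the trig-norm definition. The non-resonant and boundary contributions together are $\ll \|F\|_{\trig}^3 (\mu_\F(S) p^{-1/2} + p^{-1}) = o_{p\to\infty}(\|F\|_{\trig}^{9d})$. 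The main obstacle will be the non-resonant character-sum estimate: controlling $\sum_x e_p(A x^2 + B x) \chi_1(x) \chi_2(x+y)$ across all the degenerate subcases (quadratic phase zero or purely linear, either multiplicative character trivial, $y = 0$) requires a careful case-by-case application of Weil-type bounds, which is why the paper front-loads these ingredients in \S\ref{sec8}.
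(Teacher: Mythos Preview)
Your argument is correct and essentially parallels the paper's: Fourier-expand $F$, use multiplicativity to factor $\psi(xy)$, approximate the outer $y$-factor by $1$ on $B(\Psi,\eps)$, invoke Proposition~\ref{add-mult} on the inner $x$-sum, and identify the surviving resonant terms with the integral via the orthogonality relations \eqref{orth-plus}, \eqref{orth-times}. The one difference is bookkeeping: you isolate $O(1)$ bad $y$'s \emph{per non-resonant triple} (the roots of $A_y,B_y$), whereas the paper instead fixes $y$ and defines ``exceptional'' $y$ globally across all $\leq (2M+1)^{9d}$ tuples, which is precisely where the stated error $o_{p\to\infty}(\Vert F\Vert_{\trig}^{9d})$ comes from; your accounting actually yields the sharper $o_{p\to\infty}(\Vert F\Vert_{\trig}^{3})$ from this step. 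One harmless slip: Proposition~\ref{add-mult} only guarantees $J_y = o_{p\to\infty}(1)$ (its proof gives $O(p^{-1/16})$ via the $U^3$ bound when a multiplicative character is present), not $p^{-1/2}$, but this does not affect your final $o_{p\to\infty}$ conclusion.
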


The counting lemma is established using harmonic analysis in \S \ref{sec8}. It relies on bounds for certain character sums (mixing quadratic phases and shifts of multiplicative characters), which in turn use some fairly deep number-theoretic inputs. These issues are discussed at the beginning of \S \ref{sec8}.\vspace*{8pt}

\emph{Ramsey lemma.} Finally, we need a result of a Ramsey-theoretic nature. The counting lemma above transfers our nonlinear question about quadruples $x, y, x+y,xy$ to a question about linear configurations, but we must still solve this linear problem. A toy version of the result we need is the following: if $G$ is a sufficiently large abelian group and if $c : G \times G \times G \rightarrow [r]$ is an $r$-colouring, we may find $t,t',u,u',v,v'$ with $c(t,u,v) = c(t + u', u, v') = c(t',u',v)$. We do indeed prove such a statement, but again we need something a little more complicated for the purposes at hand, designed to dovetail with the conclusions of the counting lemma.

\begin{proposition}[Ramsey lemma]\label{ramsey-prop} There is a monotonic function $\rho : \Z_{\geq 0} \times \Z_{\geq 0} \times (0,1] \rightarrow (0,1]$ with the following property. Suppose that $X$ and $Y$ are compact Abelian groups with Haar probability measures $\mu_X, \mu_Y$, that $\pi_X : X \rightarrow (\R/\Z)^{d}$ and $\pi_Y : Y \rightarrow (\R/\Z)^{d}$ are continuous homomorphisms, and that $F_1,\dots, F_r : X \times X \times Y \rightarrow \R_{\geq 0}$ are continuous functions with $\sum_{i=1}^r F_i(x_1, x_2, y) \geq 1$ whenever we have $| \pi_X(x_1)|,|\pi_X(x_2)|,|\pi_Y(y)| \leq \delta/4$. Let $\mu = \mu_X \times \mu_X \times \mu_Y$. 
Then
\[ {\int F_i(t,u,v) F_i(t + u', u, v') F_i(t', u',v)} d\mu(t,u,v) d\mu(t',u',v') \geq \rho(r,d,\delta)
\] for some $i \in [r]$.
\end{proposition}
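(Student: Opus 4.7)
I would reduce the Ramsey lemma to a density-counting statement via pigeonhole, and then establish the counting statement via iterated Cauchy-Schwarz combined with a uniformity-versus-structure argument.

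First, I restrict the problem to a suitable ``small box''. Let $B_X := \pi_X^{-1}(\{\theta : |\theta| \leq \delta/8\}) \subset X$ and $B_Y := \pi_Y^{-1}(\{\theta : |\theta| \leq \delta/4\}) \subset Y$; by Lemma \ref{pigeon-projection} we have $\mu_X(B_X) \geq (\delta/8)^d$ and $\mu_Y(B_Y) \geq (\delta/4)^d$. On $W := B_X \times B_X \times B_Y$ the hypothesis gives $\sum_i F_i \geq 1$ (since $\delta/8 \leq \delta/4$). For each $z \in W$ some $F_i(z) \geq 1/r$, so the sets $A_i := \{z \in W : F_i(z) \geq 1/r\}$ cover $W$; pigeonhole supplies an index $i^*$ with $\mu(A_{i^*}) \geq \mu(W)/r \geq r^{-1}(\delta/8)^{3d}$. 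Setting $A := A_{i^*}$ and using $F_{i^*} \geq r^{-1}\mathbf{1}_A$ pointwise, it suffices to show
\[ \Lambda(A) := \int \mathbf{1}_A(t,u,v)\, \mathbf{1}_A(t+u',u,v')\, \mathbf{1}_A(t',u',v)\, d\mu \geq c(r,d,\delta) > 0\]
and then set $\rho(r,d,\delta) := c/r^3$.

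For the density count $\Lambda(A)$, I would integrate out variables one at a time. Integrating $t'$ yields the factor $a(u',v) := \mu_X(\{t' : (t',u',v) \in A\})$; integrating $v'$ yields $b(t+u',u) := \mu_Y(\{v' : (t+u',u,v') \in A\})$. After the substitution $s := t+u'$ (using translation-invariance of $\mu_X$), the inner integral over $u'$ collapses to a convolution in the first coordinate, and $\Lambda(A)$ becomes the inner product of $\mathbf{1}_A$ with the kernel $C(t,u,v) := \int_s b(s,u)\, a(s-t,v)\, ds$. Schematically, the constant (zero-frequency) part of $C$ in $t$ is $\rho_X(u)\rho_Y(v)$, where $\rho_X(u)$ is the measure of the $u$-slice of $A$ and $\rho_Y(v)$ is that of the $v$-slice, so the ``main term'' of $\langle \mathbf{1}_A, C \rangle$ is $\int a(u,v)\rho_X(u)\rho_Y(v)\, du\, dv$, which by Cauchy-Schwarz is bounded below by a positive expression polynomial in $\mu(A)$ and $\mu(W)$. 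The non-constant Fourier modes of $C$ contribute error terms that must be shown to be small.

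The main obstacle is controlling these error terms. The configuration has six free parameters coupled by three linear equations, and mixes the two group factors, so it has complexity comparable to a two-dimensional corner and standard $L^2$-Fourier estimates alone do not suffice. The most plausible route is a Koopman-von-Neumann decomposition of $\mathbf{1}_A$ into a structured piece supported on a bounded number of Fourier modes (for which the count can be computed directly) plus a uniform piece whose contribution is negligible by Plancherel, applied separately in each of the three group factors; a somewhat softer alternative is to invoke a density-corners theorem for compact abelian groups in the spirit of Furstenberg-Katznelson, and then extract a quantitative bound by Varnavides-style averaging. In either case the resulting bound is initially a qualitative positive constant, and monotonicity of $\rho$ in the sense required by the paper's convention can be arranged post hoc by setting $\rho(r,d,\delta) := \inf\{\rho(r',d',\delta') : r' \leq r,\ d' \leq d,\ \delta' \geq \delta\}$.
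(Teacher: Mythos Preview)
Your reduction to a single dense colour class via pigeonhole is where the argument breaks. The configuration $(t,u,v),(t+u',u,v'),(t',u',v)$ is \emph{not} a density configuration: there exist sets $A\subset W$ of positive measure with $\Lambda(A)=0$. For a concrete counterexample take $Y$ trivial, $X=\R/\Z$, $\pi_X=\id$, and $A=S\times T\subset B_X\times B_X$ with $S=[0,\delta/800)$ and $T=[\delta/400,\delta/16)$. A configuration requires $t,t+u'\in S$ (from the first two points) and $u'\in T$ (from the third); but then $u'\in(S-S)\cap T=\emptyset$, so $\Lambda(A)=0$ while $\mu(A)>0$. Hence neither a Koopman--von~Neumann decomposition nor a density-corners theorem can supply the lower bound you want: the ``error terms'' in your Fourier expansion genuinely can cancel the main term. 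The analogy with corners is misleading; in a corner the common offset is shared by the two shifted points, whereas here the offset $u'$ between the first two points becomes a \emph{coordinate} of the third, which is exactly what allows product sets $S\times T$ with $T\cap(S-S)=\emptyset$ to avoid the pattern.

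The paper's proof is accordingly Ramsey-theoretic rather than density-theoretic. After a Cauchy--Schwarz step that removes the $(v,v')$-dependence, the problem reduces to colourings of $T\times(T-T)$ for an open $T\subset X$ (Proposition~\ref{prop.mainramseydriver}), and one argues by induction on $r$ using dependent random choice (Lemma~\ref{lem.gowersweakregularity}): one locates a dense colour class $A_i$ and a large subset $T'\subset T$ on which most pairs have many common $A_i$-neighbours; if $A_i$ remains dense on $T'\times(T'-T')$ one obtains many configurations directly, and otherwise one has effectively an $(r-1)$-colouring of $T'\times(T'-T')$ and inducts. The dichotomy between the two alternatives is essential---no single-colour density argument can replace it.
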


This result is established in \S \ref{sec7}. We again proceed by induction on the number of colours (thus, taken as a whole, our paper has two nested inductions on the number of colours). The basic scheme of the argument is inspired by work of Cwalina and Schoen \cite{cwalina-schoen} on Rado's theorem, but the details are quite different. We make critical use of the ``dependent random choice'' technique pioneered by Gowers \cite{gowers-4ap}.\vspace*{8pt} 

\emph{Proposition \ref{main-prop-refined}.} We shall shortly give the proof of Proposition \ref{main-prop-refined}, assuming the regularity, counting and Ramsey lemmas. First we note that the counting and Ramsey lemmas may be combined to give the following statement.

\begin{proposition}\label{prop99}
There are monotonic functions $\kappa : \Z_{\geq 0} \times \Z_{\geq 0}  \times (0,1] \times \R_{\geq 0} \rightarrow (0,1]$ and $p_2:\R_{\geq 0} \times \Z_{\geq 0}\times \Z_{\geq 0}\times (0,1] \times (0,1]\rightarrow \N$ with the following property. Suppose that $d' \geq d$ are integers, $M \geq 1$ is real, $\sigma \in (0,1]$; that $\Psi' \supset \Psi$ are $\QM$-systems of dimension $d',d$, and that $F_1,\dots, F_r : \G^{d'} \rightarrow \R_{\geq 0}$ are functions with $\Vert F_i \Vert_{\trig} \leq M$ and $\sum_{i = 1}^r F_i \circ \Psi' \geq 1$ pointwise on $B(\Psi, \frac{1}{2}\delta)$. Then for any sets $S_1,\dots,S_r$ with $S_i \subset B(\Psi',\kappa(r,d,\delta,\|F_i\|_{\trig}))$ and $\mu_{\F}(S_i) \geq \sigma$ there is some $i$ such that
\[T(F_i \circ \Psi', 1_{S_i}, F_i \circ \Psi', F_i \circ \Psi') \geq \textstyle\frac{1}{16}\displaystyle\mu_{\F}(S_i)\rho(r,d,\delta) \] provided that $p \geq p_2(M,d',r,\sigma,\delta)$. (Where $\rho$ is as in the Ramsey lemma, Proposition \ref{ramsey-prop}.)
\end{proposition}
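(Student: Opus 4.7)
The plan is to apply the Counting Lemma (Proposition~\ref{counting-lem}) to each $T_i := T(F_i \circ \Psi', 1_{S_i}, F_i \circ \Psi', F_i \circ \Psi')$, then use the Ramsey Lemma (Proposition~\ref{ramsey-prop}) to lower-bound the main term for some $i$, and finally choose $\kappa$ and $p_2$ so that the Counting-Lemma error terms get absorbed into the Ramsey main term.

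Applying Proposition~\ref{counting-lem} with $\eps = \kappa(r,d,\delta,\|F_i\|_{\trig})$ gives
\begin{equation*}
T_i = \mu_{\F}(S_i)\, I_i + O\bigl(\kappa\,\|F_i\|_{\trig}^4\,\mu_{\F}(S_i)\bigr) + o_{p\to\infty}\bigl(\|F_i\|_{\trig}^{9d'}\bigr),
\end{equation*}
where $I_i := \int F_i(t,u,v) F_i(t+u',u,v') F_i(t',u',v)\,d\mu_{H_{\Psi'}}(t,u,v)\,d\mu_{H_{\Psi'}}(t',u',v')$. I would then apply the Ramsey Lemma with $X := G_{\Psi'}^+$, $Y := G_{\Psi'}^{\times}$ (so that $H_{\Psi'} = X \times X \times Y$), and with $\pi_X$ and $\pi_Y$ being the projections onto the first $d$ coordinates (identifying $S^1 \cong \R/\Z$ for $\pi_Y$), to conclude that $I_i \geq \rho(r,d,\delta)/8$ for some $i$. (Ramsey wants $\sum_i F_i \geq 1$ on a box but we will only establish $\sum_i F_i \geq 1/2$, so we apply Ramsey to $\{2F_i\}$, losing a factor of $8$.)

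The only nontrivial step is verifying the Ramsey hypothesis: for every $h \in H_{\Psi'}$ with $|\pi(h)| \leq \delta/4$, where $\pi:\G^{d'} \to \G^d$ is the natural projection, we must show $\sum_i F_i(h) \geq 1/2$. I would combine two ingredients. First, a direct Lipschitz estimate from Definition~\ref{deftrig} together with $|e^{2\pi i\theta} - 1| \leq 2\pi|\theta|$ yields $|F_i(h) - F_i(h')| \leq 6\pi\,\|F_i\|_{\trig}^2\,|h - h'|$ in the $\G^{d'}$ norm. Second, Corollary~\ref{cor3.3} applied to the $d'$-dimensional system $\Psi'$ guarantees, for $p \geq p_1(d',\eta)$, that for any $h \in H_{\Psi'}$ there is some $x \in \F$ with $|\Psi'(x)^{-1} h| \leq \eta$. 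Setting $M := \max_i \|F_i\|_{\trig}$ and choosing $\eta := \min\bigl(\delta/8,\; 1/(24\pi r M^2)\bigr)$, any such $x$ satisfies $|\Psi(x)| = |\pi(\Psi'(x))| \leq |\pi(h)| + \eta < \delta/2$, so $x \in B(\Psi,\delta/2)$ and the hypothesis of Proposition~\ref{prop99} gives $\sum_i F_i(\Psi'(x)) \geq 1$. The Lipschitz bound then yields $\sum_i F_i(h) \geq 1 - 6\pi r M^2 \eta \geq 1/2$.

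Finally, choose $\kappa(r,d,\delta,M)$ sufficiently small (a suitable constant times $\rho(r,d,\delta)/(1+M^4)$) that the Counting-Lemma error $O(\kappa M^4)$ is at most $\rho(r,d,\delta)/32$, and take $p_2(M,d',r,\sigma,\delta)$ large enough to both (i) realise the equidistribution at accuracy $\eta$ above, and (ii) make the error $o_{p\to\infty}(M^{9d'}) \leq \sigma\,\rho(r,d,\delta)/32 \leq \mu_{\F}(S_i)\,\rho/32$. Monotonicity of $\kappa$ and $p_2$ in the paper's sense follows because $\rho$ and $p_1$ are monotone and the algebraic factors $M^4$, $M^{9d'}$, $1/\sigma$ all point in the right direction. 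Combining the bounds gives $T_i \geq \mu_{\F}(S_i)\bigl(\rho/8 - \rho/32 - \rho/32\bigr) = \mu_{\F}(S_i)\,\rho/16$, as required. The main obstacle is the Lipschitz-plus-equidistribution transfer of the pointwise lower bound from the discrete set $B(\Psi,\delta/2) \subset \F$ to the continuous group $H_{\Psi'}$: this is what forces $p_2$ (via $\eta$) to depend on $M$ through the Lipschitz constant $O(M^2)$.
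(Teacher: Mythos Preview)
Your proposal is correct and follows essentially the same route as the paper: apply the Counting Lemma to each $T_i$, transfer the pointwise lower bound $\sum_i F_i\circ\Psi' \geq 1$ from $B(\Psi,\delta/2)$ to the continuous group $H_{\Psi'}$ via the Lipschitz estimate on trigonometric polynomials combined with the equidistribution from Corollary~\ref{cor3.3}, apply the Ramsey Lemma to $2F_i$, and then absorb the error terms by choosing $\kappa$ and $p_2$. The only cosmetic difference is that the paper includes an extra (harmless and in fact unnecessary) factor of $d'$ in its Lipschitz constant, writing $6\pi d'M^2$ where your $6\pi M^2$ already suffices since $\|\xi_j\|_1 \leq M$ controls the inner products directly.
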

\emph{Remark.} A crucial point here is that neither $\kappa$ nor $\rho$ depends on $d'$. If they did, our arguments would be circular.
\begin{proof}
Apply the counting lemma (Proposition \ref{counting-lem}) to each of the $F_i$s and the $\QM$-system $\Psi'$ to get that
\begin{align*} & T(F_i \circ \Psi', 1_{S_i}, F_i \circ \Psi', F_i \circ \Psi') \\ & \qquad =   \mu_{\F}(S) \int F_i(t,u,v) F_i(t + u', u, v') F_i(t', u', v) d\mu_{H_{\Psi'}}(t,u,v) d\mu_{H_{\Psi'}}(t',u',v') \\ & \qquad  \qquad +   O(\kappa \mu_{\F}(S)\|F_i\|_{\trig}^4) + o_{p \rightarrow \infty}(M^{9d'})\\
& \qquad \geq \mu_{\F}(S) \int F_i(t,u,v) F_i(t + u', u, v') F_i(t', u', v) d\mu_{H_{\Psi'}}(t,u,v) d\mu_{H_{\Psi'}}(t',u',v')\\
& \qquad \qquad - \textstyle\frac{1}{16}\displaystyle\mu_{\F}(S_i)\rho(r,d,\delta)
\end{align*}
provided $p \geq p_3(M,r,d',\delta,\sigma)$ (where $p_3$ can be taken monotone increasing in its arguments) and $\kappa=c\rho(r,d,\delta)\min ( 1, \|F_i\|_{\trig}^{-4})$ for some absolute $c>0$, which satisfies the relevant monotonicity properties since $\rho$ does. 

Let $\eps:=\delta/12 \pi rd'(1+M)^2$; the reason for this choice will become clear later.  We should like to apply the Ramsey lemma to which end we put $X:=G_{\Psi'}^+$ and $Y:=G_{\Psi'}^\times$ then $H_{\Psi'} = X \times X \times Y$.  Moreover, if we write $\pi_X:G_{\Psi'}^+ \rightarrow G_{\Psi}^+$, and $\pi_Y:\G_{\Psi'}^\times \rightarrow G_{\Psi}^\times$ for the respective projections onto the first $d$-coordinates then these are well-defined continuous homomorphisms, as is $\pi:H_{\Psi'} \rightarrow H_{\Psi}$ defined by $\pi(x_1,x_2,y) = (\pi_X(x_1),\pi_X(x_2),\pi_Y(y))$.

If $h=(x_1,x_2,y) \in H_{\Psi'}$, then by Corollary \ref{cor3.3} (provided $p \geq p_1(d',\eps)$) there is some $z \in \F$ with $|h^{-1}\Psi'(z)| \leq \eps$.   

Two things follow from this. First, $|\pi(h)^{-1}\pi(\Psi'(z))| \leq \eps$, so if $|\pi(h)| \leq \delta/4$ then $|\Psi(z)| \leq \frac{1}{4}\delta + \eps \leq \frac{1}{2}\delta$ by the triangle inequality \emph{i.e.} $z \in B(\Psi,\frac{1}{2}\delta)$.

Secondly, it is easy to see that the functions $F_i$ are Lipschitz in the sense that
\begin{equation*}
|F_i(w)-F_i(v)| \leq 6\pi d'M^2|w^{-1}v| \text{ for all } w,v \in \G^{d'},
\end{equation*}
so they are continuous, but we also have
\begin{equation*}
\big|\sum_{i}{F_i(h)} - \sum_i{F_i(\Psi'(z))}\big| \leq 6 \pi rd'M^2|h^{-1}\Psi'(z)| \leq 6\pi \eps rd'M^2 \leq \textstyle\frac{1}{2}\displaystyle.
\end{equation*}

We conclude from these two facts and the hypothesis that $\sum_i{F_i\circ \Psi'} \geq 1$ on $B(\Psi,\frac{1}{2}\delta)$, that if $h  \in H_{\Psi'}$ has $|\pi(h)| \leq \frac{1}{4}\delta$ then
\begin{equation*}
\sum_{i}{F_i(h)} \geq  \sum_i{F_i(\Psi'(z))}- \textstyle\frac{1}{2}\displaystyle \geq \textstyle\frac{1}{2}\displaystyle.
\end{equation*}
We now apply the Ramsey lemma (Proposition \ref{ramsey-prop}) to the functions $2F_i$ to get that
\begin{equation*}
\int F_i(t,u,v) F_i(t + u', u, v') F_i(t', u', v) d\mu_{H_{\Psi'}}(t,u,v) d\mu_{H_{\Psi'}}(t',u',v') \geq \textstyle\frac{1}{8}\displaystyle\rho(r,d,\delta)
\end{equation*}
for some $i \in [r]$.  This gives the result provided
\begin{equation*}
p \geq p_2(M,d',r,\sigma,\delta):=p_3(M,r,d',\delta,\sigma)+p_1(d',\delta/12 \pi rd'M^2)
\end{equation*}
which is easily seen to be monotone.  The proposition is proved.
\end{proof}

Finally, we record a fairly simple application of the Cauchy-Schwarz inequality. 

\begin{lemma}\label{simple-lem}
Suppose that $S \subset \F$ and $f_1,f_3,f_4:\F \rightarrow [-K,K]$ are functions with $\|f_1\|_4,\|f_3\|_4,\|f_4\|_4 \leq 3$. Then 
\[ |T(f_1, 1_S, f_3, f_4)| \leq \frac{K^3}{p} + 9\mu_{\F}(S)\min_i \Vert f_i \Vert_{2}.\]
\end{lemma}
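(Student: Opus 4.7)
The plan is to split the inner average defining $T$ according to whether $y = 0$ or $y \neq 0$, handle the (tiny) diagonal contribution $y = 0$ separately, and then for the main range $y \neq 0$ use the multiplicative bijections $x \mapsto xy$, $x \mapsto x/y$, and $x \mapsto x + y$ on $\F$ (respectively $\F^*$) to freely change variables. After changing variables I will apply Cauchy--Schwarz in the new variable to peel off whichever factor $f_i$ we wish to bound by $\|f_i\|_2$, and then handle the remaining product of two factors by Hölder using the assumption $\|f_j\|_4 \leq 3$.

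More concretely, for the $y=0$ piece (which contributes only if $0\in S$), I would write
\[
\Big|\frac{1}{p^2}\sum_{x\in\F} f_1(x)f_3(x)f_4(0)\Big| \leq \frac{K}{p^2}\sum_{x\in\F}|f_1(x)||f_3(x)| \leq \frac{K}{p}\|f_1\|_2\|f_3\|_2 \leq \frac{K^3}{p},
\]
using $\|f_i\|_\infty\leq K$ and Cauchy--Schwarz. For the $y\neq 0$ piece, fix $y\in S\setminus\{0\}$ and substitute $z=xy$, obtaining an inner average of the form $\E_{z\in\F} f_1(z/y)f_3(z/y+y)f_4(z)$. To bound this by (a constant times) $\|f_4\|_2$, apply Cauchy--Schwarz in $z$ and then Hölder, using that $z\mapsto z/y$ and $z\mapsto z/y+y$ are bijections on $\F$, so $\E_z|f_1(z/y)|^4 = \|f_1\|_4^4$ and $\E_z|f_3(z/y+y)|^4 = \|f_3\|_4^4$, each at most $81$. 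Hence the inner average has modulus at most $9\|f_4\|_2$, and summing over $y\in S\setminus\{0\}$ and dividing by $p$ yields a contribution of at most $9\mu_\F(S)\|f_4\|_2$. Adding the two pieces gives the claimed bound with the $\|f_4\|_2$ on the right.

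To obtain the analogous bound with $\|f_1\|_2$, after the substitution $z = xy$ I further substitute $w=z/y$ so that the inner average becomes $\E_w f_1(w)f_3(w+y)f_4(wy)$; Cauchy--Schwarz in $w$ then isolates $\|f_1\|_2$, and again Hölder plus the fact that $w\mapsto wy$ is a bijection on $\F$ for $y\neq 0$ bounds the other factor by $\|f_3\|_4\|f_4\|_4\leq 9$. For $\|f_3\|_2$, instead substitute $u = z/y + y$ (so $z = y(u-y)$, again a bijection for $y\neq 0$) and repeat the same two-step Cauchy--Schwarz--Hölder argument. Taking the minimum over $i\in\{1,3,4\}$ of the three bounds gives the lemma.

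There is really no obstacle here — the only thing to watch is that each change of variables is legitimate (which requires $y\neq 0$, hence the need to peel off $y=0$ first) and that the $L^4$ norms are preserved under the multiplicative bijections, which is immediate because these bijections permute $\F$ and thus preserve uniform averages of $|f_j|^4$.
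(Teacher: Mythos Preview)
Your proof is correct and follows essentially the same approach as the paper: split off $y=0$, then for fixed $y\in S\setminus\{0\}$ apply Cauchy--Schwarz in the $x$-variable (or a bijective substitute thereof) followed by H\"older with the $L^4$ bounds, using that $x\mapsto x+y$ and $x\mapsto xy$ are bijections on $\F$ when $y\neq 0$. The paper works directly in $x$ rather than making your preliminary substitutions (note for instance that your composite substitution $z=xy$, $w=z/y$ for the $\|f_1\|_2$ case is just the identity $w=x$), but this is purely cosmetic.
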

\begin{proof}
By the Cauchy-Schwarz inequality we have
\begin{align*}
|T(f_1, 1_S, f_3, f_4)| & = |\E_{x,y} f_1(x)1_S(y) f_3(x+y) f_4(xy)| \\ & \leq \frac{K^3}{p} + |\E_{x,y} 1_{\F^*}(y)f_1(x)1_S(y) f_3(x+y) f_4(xy)|\\
& \leq \frac{K^3}{p} + \mu_{\F}(S) \sup_{y \in \F^*} \E_x |f_1(x) f_3(x+y) f_4(xy)|   \\ & \leq \frac{K^3}{p} + \mu_{\F}(S) \sup_{y \in \F^*} (\E_x f_1(x)^2)^{1/2} (\E_x f_3(x+y)^2 f_4(xy)^2 )^{1/2}\\  & \leq \frac{K^3}{p} + \mu_{\F}(S) \sup_{y \in \F^*} (\E_x f_1(x)^2)^{1/2} (\E_x f_3(x+y)^4)^{1/4}(\E_xf_4(xy)^4 )^{1/4}\\ &  \leq \frac{K^3}{p} + 9\mu_{\F}(S) \Vert f_1 \Vert_{2}.
\end{align*}
The proofs for $i \in \{ 3,4\}$ are very similar. 
\end{proof}
\newcommand\gvn{\operatorname{gvn}}
We are now ready for the proof of Proposition \ref{main-prop-refined}.
\begin{proof}[Proof of Proposition \ref{main-prop-refined}]
We proceed by induction on $r$, setting $\eta( \cdot, \cdot , 0), \zeta(\cdot,\cdot,0) = \frac{1}{2}$ and $p_0(\cdot,\cdot,0) =1$. All of these functions are monotone and vacuously satisfy the proposition since there is no $0$-colouring of a non-empty set, and $|\tilde B| \geq \frac{1}{2}|B| >0$ since every $\QM$-Bohr set contains $0$.

Suppose we have established the existence of $\eta( \cdot, \cdot, r-1),\zeta(\cdot,\cdot,r-1)$ and $p_0(\cdot,\cdot,r-1)$ satisfying the desired conclusion; we shall show how to define $\eta( \cdot, \cdot, r),\zeta(\cdot,\cdot,r)$ and $p_0(\cdot,\cdot,r-1)$.  The argument we present is a little delicate and so we shall be quite explicit.  We shall make use of the following functions:
\begin{enumerate}
\renewcommand{\labelenumi}{(\roman{enumi})} 
\item $\rho$ from Proposition \ref{ramsey-prop}, the Ramsey lemma;
\item $\kappa$ and $p_2$ from Proposition \ref{prop99}, the combined counting and Ramsey lemmas;
\item $\beta$ from Lemma \ref{bohr-lower} (our lower bound for the density of a $\QM$-Bohr set);
\item $M_{\Omega}$, $D_{\Omega}$ and $p_\Omega$ from Proposition \ref{regularity}, the regularity lemma, depending on some soon-to-be-defined growth function $\Omega$;
\item $\nu$ from Corollary \ref{gvn-cor}, the generalised von Neumann theorem.
\end{enumerate}
We use these functions to define a growth function $\Omega$ by
\[ \Omega_{r,d,\delta}(R,d'):= 1/\nu\bigg( \textstyle\frac{1}{372}\displaystyle \eta\big(\min\{\kappa(r,d,\delta,R),\delta\},d',r-1\big) \rho(r,d,\delta) \beta\big(d',\min\{\kappa(r,d,\delta,R),\delta\}\big)  \bigg)\] 
The monotonicity of $\kappa$, $\rho$, $\beta$ and $\eta(\cdot,\cdot,r-1)$ ensures that $\Omega$ is monotonic (as a function from $\Z_{\geq 0} \times \Z_{\geq 0} \times (0,1] \times \R_{\geq 0} \times \Z_{\geq 0} \rightarrow \R_{\geq 1}$), and so we may apply the regularity lemma (Proposition \ref{regularity}).  Doing so gives up monotonic functions $M_\Omega$ and $D_\Omega$, and with these in hand we make some definitions:
\[ \eps := \textstyle\frac{1}{1728}\displaystyle \rho(r,d,\delta),  M_{r,d,\delta}:=M_\Omega(r,d,\delta,\eps) \text{ and } D_{r,d,\delta}:=D_\Omega(r,d,\delta,\eps),\]
all of which values depend monotonically on the values of $r$, $d$, and $\delta$ by the monotonicity of $\rho$, $M_\Omega$ and $D_\Omega$.  Finally we write
\[ \delta':=\min\{\kappa(r, d,\delta,M_{r,d,\delta}),\delta\},\]
which depends monotonically on $r$, $d$, and $\delta$ by the monotonicity of $\kappa$ and $M_{r,d,\delta}$.  We then define the functions $\eta( \cdot, \cdot, r),\zeta(\cdot,\cdot,r)$ and $p_0(\cdot,\cdot,r)$ as follows:
\[ \eta(\delta,d,r) := \min \bigg\{ \textstyle\frac{1}{100}\displaystyle \eps^2, \textstyle\frac{1}{2}\displaystyle\beta(D_{r,d,\delta},\delta') \eta\big(\delta',D_{r,d,\delta} , r-1\big) ,\eta(\delta,d,r-1)\bigg\},\]
\[ \zeta(\delta,d,r) := \min \bigg\{ \zeta(\delta,d,r-1), \zeta(\delta',D_{r,d,\delta},r-1), \textstyle\frac{1}{128}\displaystyle\eta(\delta',D_{r,d,\delta},r-1) \rho(r,d,\delta) \beta(D_{r,d,\delta},\delta')\bigg\},\]
and
\begin{align*}
p_0(\delta,d,r) &:= \left\lceil\max \bigg\{ p_2\left(M_{r,d,\delta},D_{r,d,\delta},r,\textstyle\frac{1}{2}\displaystyle\eta(\delta',D_{r,d,\delta},r-1)\beta(D_{r,d,\delta},\delta'),\delta\right),\right.\\
& \qquad \qquad \qquad 1+1/\nu\left(1/\Omega_{r,d,\delta}(M_{r,d,\delta},D_{r,d,\delta})\right),p_0(\delta',D_{r,d,\delta},r-1),p_\Omega(r,d,\delta,\eps),\\
& \left.\qquad \qquad \qquad 384(1+M_{r,d,\delta})^3\eta(\delta',D_{r,d,\delta},r-1)^{-1}\beta(D_{r,d,\delta},\delta')^{-1}\rho(r,d,\delta)^{-1}\bigg\}\right\rceil.
\end{align*}
These functions inherit the appropriate monotonicity properties from the monotonicity of $\eps$, $D_{r,d,\delta}$, $\delta'$, $\beta$, $\eta(\cdot,\cdot,r-1)$, $\zeta(\cdot,\cdot,r-1)$, $\rho$, $p_2$, $p_0(\cdot,\cdot,r-1)$, $M_{r,d,\delta}$, and $\nu$.

Now suppose that $B$ is a $d$-dimensional $\QM$-Bohr set of width $\delta$ with underlying $\QM$-system $\Psi$, and $c : \tilde B \rightarrow [r]$ is a colouring of $\tilde{B}$ where $|\tilde B| \geq (1 -\eta(\delta,d,r) )|B|$.  Since $\eta(\delta,d,r) \leq \frac{1}{100}\eps^2$, the regularity lemma tells us that (since $p \geq p_\Omega(r,d,\delta,\eps)$) there is a $\QM$-system $\Psi' \supset \Psi$, functions $F_1,\dots, F_r : \G^{d'} \rightarrow \R_{\geq 0}$ and functions $g_1,\dots, g_r : \F \rightarrow [-1,1]$ such that:

\begin{enumerate}
\item \label{1} $\Vert F_i \Vert_{\trig} \leq M_\Omega(r,d,\delta,\eps)=M_{r,d,\delta}$, $i \in \{ 1,\dots, r\}$;
\item \label{2} $d' \leq D_\Omega(r, d,\delta,\eps)=D_{r,d,\delta}$;
\item \label{3} $\Vert F_i \circ \Psi' - g_i \Vert_2 \leq \eps$ for $i \in \{1,\dots, r\}$;
\item \label{4} $\Vert 1_{c^{-1}(i)} - g_i\Vert_{\QM} \leq 1/\Omega_{r,d,\delta}(\Vert F_i \Vert_{\trig}, d')$;
\item \label{5} $\sum_{i = 1}^r F_i \circ \Psi' \geq 1$ pointwise on $B(\Psi, \frac{1}{2}\delta)$;
\item \label{6} $\|F_i \circ \Psi'\|_4 \leq 2$ for all $i \in \{1,\dots,r\}$.
\end{enumerate}

For each $i \in [r]$ write $\delta_i := \min\{\kappa(r, d,\delta,\|F_i\|_{\trig}),\delta\}$ (so that $\delta_i \geq \delta'$ for all $i$), $B_i:=B(\Psi',\delta_i)$ and $S_i:=c^{-1}(i)\cap B_i$.  (On a first pass it may seem like we could take $\delta_i=\delta'$ for all $i$, but we cannot because in (\ref{4}) we only have an upper bound in terms of $\|F_i\|_{\trig}$ rather than $M_{r,d,\delta}$, and the former might be much smaller than the latter.)  We consider two cases.

Suppose first that $|S_i| \leq \frac{1}{2}\eta(\delta_i,d',r-1) |B_i|$ for some $i$.  Then by the lower bound for the density of $\QM$-Bohr sets (Lemma \ref{bohr-lower}); the definition of $\eta$; the fact that $d' \leq D_{r,d,\delta}$ and $\delta_i \geq \delta'$; and the monotonicity of $\eta(\cdot,\cdot,r-1)$ and $\beta$, we have
\begin{align*}
|B_i \setminus \tilde B| \leq |B \setminus \tilde B| \leq \eta(\delta, d, r) |B| & \leq \frac{\eta(\delta,d, r)}{\beta(d',\delta_i)} |B_i|\\ & \leq  \frac{\eta(\delta',D_{r,d,\delta},r-1)\beta(D_{r,d,\delta},\delta')}{2\beta(d',\delta_i)} |B_i|\\ &  \leq \textstyle\frac{1}{2}\displaystyle\eta(\delta_i, d', r-1) |B_i|.
\end{align*}
It follows that $c$ restricts to an $(r-1)$-colouring of $B_i\setminus (\tilde{B}\cup S_i)$ where, by the triangle inequality, we have
\begin{equation*}
|B_i\setminus (\tilde{B}\cup S_i)| \geq |B_i| - |B_i\setminus \tilde{B}| - |S_i| \leq \eta(\delta_i,d',r-1) |B_i|.
\end{equation*}
By the inductive hypothesis and monotonicity of $\zeta(\cdot,\cdot,r-1)$ we conclude that there are at least
\begin{equation*}
\zeta(\delta_i,d',r-1)p^2 \geq \zeta(\delta',D_{r,d,\delta},r-1)p^2
\end{equation*}
pairs $(x,y)$ with $x,y,x+y,xy$ all the same colour.  Thus in this case the result is proved.

The second case is that $|S_i| \geq \frac{1}{2}\eta(\delta_i,d',r-1) |B_i|$ for all $i$.  In this case we have
\begin{align}
\nonumber \mu_\F(S_i) & \geq \textstyle\frac{1}{2}\displaystyle\eta(\delta_i,d',r-1)\mu_\F(B_i) \\ \nonumber & \geq \textstyle\frac{1}{2}\displaystyle\eta(\delta_i,d',r-1)\beta(d',\delta_i) \\ \label{e1} & \geq \textstyle\frac{1}{2}\displaystyle\eta(\delta',D_{r,d,\delta},r-1)\beta(D_{r,d,\delta},\delta')
\end{align}
by the lower bound for the density of $\QM$-Bohr sets (Lemma \ref{bohr-lower}), the monotonicity of $\eta(\cdot,\cdot,r-1)$ and $\beta$, and the fact that $d' \leq D_{r,d,\delta}$ and $\delta_i \geq \delta'$.

By Proposition \ref{prop99} (for which application we need (\ref{5})) there is some $i \in \{1,\dots,r\}$ such that
\[ T(F_i \circ \Psi', 1_{S_i}, F_i \circ \Psi', F_i \circ \Psi') \geq \textstyle\frac{1}{16}\displaystyle\mu_{\F}(S_i) \rho(r,d,\delta),\]
provided
\begin{equation*}
p\geq p_2\left(M_{r,d,\delta},d',r, \textstyle\frac{1}{2}\displaystyle\eta(\delta',D_{r,d,\delta},r-1)\beta(D_{r,d,\delta},\delta'),\delta\right),
\end{equation*}
which follows from the definition of $p_0$ and the monotonicity of $p_2$.

Note that by (\ref{6}) and the triangle inequality we have
\begin{equation*}
\|g_i\|_4 \leq 1, \|F_i\circ \Psi'\|_4 \leq 2, \text{ and }\|g_i - F_i \circ \Psi'\|_4 \leq 3.
\end{equation*}
Thus by the triangle inequality, Lemma \ref{simple-lem} and item (\ref{3}) above we have
\begin{align*}
& |T(g_i, 1_{S_i}, g_i, g_i) - T(F_i \circ \Psi', 1_{S_i}, F_i \circ \Psi', F_i \circ \Psi')|\\
& \leq  |T(g_i, 1_{S_i}, g_i, g_i) - T(F_i \circ \Psi', 1_{S_i}, g_i,g_i )|\\
& \qquad +  |T(F_i \circ \Psi', 1_{S_i}, g_i, g_i) - T(F_i \circ \Psi', 1_{S_i}, F_i \circ \Psi',g_i)|\\
& \qquad +  |T(F_i \circ \Psi', 1_{S_i}, F_i \circ \Psi', 1_{S_i},  g_i) - T(F_i \circ \Psi', 1_{S_i}, F_i \circ \Psi',F_i \circ \Psi')|\\
& \leq \frac{3(1+\|F_i\|_\infty)^3}{p} + 27\mu_{\F}(S_i)\eps \leq  \frac{3(1+M_{r,d,\delta})^3}{p} + 27\mu_{\F}(S_i)\eps.
\end{align*}
By the choice of $\eps$, and provided
\begin{equation*}
p \geq 3(1+M_{r,d,\delta})^3 \cdot 64 \mu_\F(S_i)\rho(r,d,\delta)^{-1}
\end{equation*}
this tells us that
\begin{equation}\label{eqn.s}
|T(g_i, 1_{S_i}, g_i, g_i) - T(F_i \circ \Psi', 1_{S_i}, F_i \circ \Psi', F_i \circ \Psi')| \leq \textstyle\frac{1}{32}\displaystyle\mu_{\F}(S_i)\rho(r,d,\delta).
\end{equation}
Of course by \eqref{e1} we see
\begin{align*}
& 3(1+M_{r,d,\delta})^3 \cdot 64 \mu_\F(S_i)\rho(r,d,\delta)^{-1} \\
& \qquad \qquad \leq  384(1+M_{r,d,\delta})^3\eta(\delta',D_{r,d,\delta},r-1)^{-1}\beta(D_{r,d,\delta},\delta')^{-1}\rho(r,d,\delta)^{-1} \leq p_0(r,d,\delta),
\end{align*}
and so \eqref{eqn.s} holds provided $p \geq p_0(r,d,\delta)$.

It follows from the hypothesis on the size of the $S_i$s, and the lower bound on the density of $\QM$-Bohr sets that
\begin{align*}
T(g_i, 1_{S_i}, g_i, g_i) & \geq \textstyle\frac{1}{32}\displaystyle \mu_{\F}(S_i) \rho(r,d,\delta)\\
 & \geq \textstyle\frac{1}{64}\displaystyle\mu_{\F}(B_i) \eta(\delta_i,d',r-1) \rho(r,d,\delta)\\ & \geq \textstyle\frac{1}{64}\displaystyle \eta(\delta_i,d',r-1) \rho(r,d,\delta) \beta(d',\delta_i).
\end{align*}

By the generalised von Neumann theorem (Corollary \ref{gvn-cor}), monotonicity of $\nu^{-1}$, the choice of $\Omega$ and (\ref{4}) above, this implies that
\begin{align*}
& |T(g_i, 1_{S_i}, g_i, g_i) -T(1_{c^{-1}(i)}, 1_{S_i}, 1_{c^{-1}(i)}, 1_{c^{-1}(i)})|\\
&  \qquad \leq |T(g_i-1_{c^{-1}(i)}, 1_{S_i}, g_i, g_i)|\\
& \qquad \qquad \qquad  + |T(1_{c^{-1}(i)}, 1_{S_i}, g_i-1_{c^{-1}(i)}, g_i)| +  |T(1_{c^{-1}(i)}, 1_{S_i}, 1_{c^{-1}(i)},g_i-1_{c^{-1}(i)})|\\
&  \qquad \leq 3\nu^{-1}(\|g_i-1_{c^{-1}(i)}\|_{\QM})\\
&  \qquad \leq 3\nu^{-1}(1/\Omega_{r,d,\delta}(\|F_i\|_{\trig},d'))  = \textstyle\frac{1}{128}\displaystyle\eta(\delta_i,d',r-1) \rho(r,d,\delta) \beta(d',\delta_i),
\end{align*}
provided
\begin{equation*}
p > 1/\nu\left(1/\Omega_{r,d,\delta}(\|F_i\|_{\trig},d')\right).
\end{equation*}
By monotonicity of $\Omega_{r,d,\delta}$ and the fact that $d' \leq D_{r,d,\delta}$ and $\|F_i\|_{\trig} \leq M_{r,d,\delta}$, this follows from $p \geq p_0(\delta,d,r)$ as defined.  By the monotonicity of $\eta(\cdot,\cdot,r-1)$ and $\beta$, and the pointwise inequality $1_{S_i} \leq 1_{c^{-1}(i)}$ we conclude that
\begin{align*}
T(1_{c^{-1}(i)}, 1_{S_i}, 1_{c^{-1}(i)}, 1_{c^{-1}(i)}) & \geq \textstyle\frac{1}{128}\displaystyle \eta(\delta_i,d',r-1) \rho(r,d,\delta) \beta(d',\delta_i)\\
& \geq \textstyle\frac{1}{128}\displaystyle\eta(\delta',D_{r,d,\delta},r-1) \rho(r,d,\delta) \beta(D_{r,d,\delta},\delta').
\end{align*}
The result is proved given the choice of $\zeta$.
\end{proof}

\emph{Remark.} The above argument is not straightforward, in particular with regard to checking that the parameters do not depend on one another in a circular manner. A different way to arrange the arguments might be via the use of an ultraproduct. However, this introduces a considerable amount of additional language, which propagates out to other sections as well. Therefore, even though on some conceptual level an ultraproduct formulation could be the ``right'' way to phrase the argument, we have chosen not to follow this route.

\section{Proof of the regularity lemma}\label{sec5}

In this section we prove the regularity lemma, Proposition \ref{regularity}. The reader may wish to recall its statement. The proof proceeds using an ``energy-increment'' argument of a type that will be familiar to experts. However, it takes some effort to sort out the technical details specific to our situation. Let $d$ be a non-negative integer (which $d$ we are talking about at any given point will be clear from context). We begin by describing a partition of $\G^d$ into certain boxes. Let $R > 0$ be a power of $2$. Suppose that $t,u,v \in \{0,1,\dots, R-1\}^d$. Then we define \emph{generalised intervals}
\begin{align*}
I_{R;t,u,v}  & := \big\{ (\theta, \phi, z) \in \G^d : \theta_j \in \big[\frac{t_j}{R} + \sqrt{2},  \frac{t_j+1}{R} + \sqrt{2}\big) +\Z,  \phi_j \in \big[\frac{u_j}{R} + \sqrt{2}, \frac{u_j+1}{R} + \sqrt{2}\big)+\Z ,\\ 
 & \frac{1}{2\pi i}\log z_j \in \big[\frac{v_j}{R} + \sqrt{2}, \frac{v_j+1}{R} + \sqrt{2}\big)+\Z \quad \mbox{for}\quad  j \in \{1,\dots,d\}\big\} .
\end{align*}
It is worth making a few remarks about these sets.
\begin{enumerate}
\renewcommand{\labelenumi}{(\roman{enumi})} 
\item For each $R$ the set $\mathcal{I}_R:=\{I_{R;t,u,v}: t,u,v \in \{0,1,\dots, R-1\}^d\}$ is a partition of $\G^d$ into $R^{3d}$ sets.
\item We restrict $R$ to be a power of $2$ as a technical convenience, to ensure that if $R'>R$ then $\mathcal{I}_{R'}$ is a refinement of $\mathcal{I}_{R}$.
\item The $\sqrt{2}$ here is present as a technical device to help control edge effects later on; it has the usual property of being poorly approximated by rationals. The only point in the argument at which it is relevant is in the proof of estimate \eqref{edge-bound-3}, a technical point in the proof of Lemma \ref{lem5.4}. 
\end{enumerate}
Let $\Psi = (a_i x^2/p, 2a_i x/p, \psi_i(x))_{i = 1}^d$ be a $\QM$-system of dimension $d$. Suppose that $R > 0$ is a power of $2$.  Where $\Psi$ is clear like this we write
\begin{equation*}
A_{R;t,u,v}:=\Psi^{-1}(I_{R;t,u,v}) \text{ for each } t,u,v \in \{0,\dots,R-1\}^d.
\end{equation*}
The sets $\{A_{R;t,u,v}: t,u,v \in  \{0,\dots,R-1\}^d\}$ form a partition of $\F$ (possibly with the addition of the empty set) since $\mathcal{I}_R$ is a partition of $\G^d$ and, in particular, generate a $\sigma$-algebra $\mathcal{B}$.  We define the associated \emph{projection operator} at \emph{resolution} $R$ to be
\begin{equation*}
\Pi_R^{\Psi} : L^{2}(\F) \rightarrow L^{2}(\F); f \mapsto \E(f | \mathcal{B}),
\end{equation*}
so that
\[ \Pi_R^{\Psi} f(x) = \frac{1}{|A(x)|} \sum_{x' \in A(x)} f(x')\] where $A(x)$ is the atom containing $x$. As with any conditional expectation operator, $\Pi_R^{\Psi}$ is self-adjoint. Indeed
\[
\langle f, \E(g |\mathcal{B}) \rangle  = \E_x f(x) \frac{1}{|A(x)|} \sum_{x' \in A(x)} g(x')  = \frac{1}{p} \sum_{x, x'} f(x) g(x') 1_{A(x)}(x') \frac{1}{|A(x)|},
\]
and the kernel $1_{A(x)}(x') \frac{1}{|A(x)|}$ is symmetric in $x$ and $x'$ so this also equals $\langle \E(f | \mathcal{B}), g\rangle$.

\begin{lemma}\label{lem5.2}
Suppose that $f$ has $\|f\|_{\infty} \leq 1$ and $\Vert f \Vert_{\QM} \geq \delta$. Let $R \geq C\delta^{-1}$ be a power of $2$. Then there is a $\QM$-system $\Phi$ of dimension at most 2 and a function $g \in L^{\infty}(\F)$ with $\Vert g \Vert_{\infty} \leq 1$, such that $|\langle f, \Pi_R^{\Phi} g \rangle| \gg \delta$.
\end{lemma}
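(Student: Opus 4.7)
The plan is to unpack the hypothesis $\|f\|_{\QM} \geq \delta$ into explicit correlation with a product of a quadratic phase and a multiplicative character, use that product to \emph{define} the $\QM$-system $\Phi$, and then show that this product can be approximated in $L^\infty$ by a function constant on the atoms of the partition associated to $\Phi$ at resolution $R$. Self-adjointness of $\Pi_R^{\Phi}$ then gives the claim.

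By definition of $\|\cdot\|_{\QM}$ there exist $\phi \in Q(\F)$ and $\chi \in \widehat{\F^*}$ with $|\langle f,\phi\chi\rangle| \geq \delta$. Write $\phi(x) = e_p(rx^2+sx)$. I propose to build a $\QM$-system $\Phi$ of dimension at most $2$ for which $\phi\chi$ lifts through $\Phi$ to a nice function on $\G^d$. Concretely, if both $r$ and $s$ are non-zero take $d=2$ and set
\[
\Phi(x) := \big( rx^2/p,\; 2rx/p,\; \chi(x),\; (s/2)x^2/p,\; sx/p,\; 1\big),
\]
where $s/2$ is computed in $\F_p$. Then the function $F:\G^2\to \C$ given by $F(\theta_1,\theta_2,z,\theta_3,\theta_4,w) := e(\theta_1)\, z\, e(\theta_4)$ satisfies $F(\Phi(x))=\phi(x)\chi(x)$ for all $x\in \F$, and $\|F\|_\infty \leq 1$. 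In the degenerate cases $r=0$ or $s=0$ (or both) one takes the analogous $\QM$-system of dimension $1$ with appropriately chosen $a_1$ to absorb either the linear or the quadratic piece.

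The next step is to approximate $F$ by a function $F_R$ that is constant on each generalised interval $I_{R;t,u,v}$. Since $F$ is a product of at most three factors, each of the form $\theta\mapsto e(\theta)$ or $z\mapsto z$, all bounded by $1$ and Lipschitz with constant $O(1)$ with respect to the metric on $\G^d$ defined in \S\ref{sec3}, $F$ itself is Lipschitz on $\G^d$ with some absolute constant. Each $I_{R;t,u,v}$ has diameter $O(1/R)$ in this metric, so setting $F_R$ to equal the average (say) of $F$ over $I_{R;t,u,v}$ on that set yields $\|F-F_R\|_\infty \leq C'/R$ for an absolute constant $C'$.

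Finally I take $g := \phi\chi$, so $\|g\|_\infty=1$. Using self-adjointness of $\Pi_R^{\Phi}$ and the fact that $F_R\circ \Phi$ is constant on the atoms of $\Phi$ at resolution $R$ (hence fixed by $\Pi_R^{\Phi}$), I compute
\[
\langle f, \Pi_R^{\Phi} g\rangle = \langle \Pi_R^{\Phi} f, F\circ \Phi\rangle = \langle \Pi_R^{\Phi} f, F_R\circ \Phi\rangle + O(1/R) = \langle f, F_R\circ \Phi\rangle + O(1/R) = \langle f, \phi\chi\rangle + O(1/R),
\]
where the $O(1/R)$ errors use $\|f\|_\infty,\|\Pi_R^{\Phi} f\|_\infty \leq 1$ together with $\|F-F_R\|_\infty = O(1/R)$. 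Choosing $C$ in the hypothesis $R\geq C\delta^{-1}$ large enough to absorb the $O(1/R)$ error into $\tfrac12\delta$ gives $|\langle f,\Pi_R^{\Phi}g\rangle|\geq \tfrac12\delta \gg \delta$.

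No step here seems substantially harder than the others; the only point requiring modest care is the case analysis on whether $r$ or $s$ vanishes (to keep $\dim \Phi \leq 2$ rather than pushing it up to $2$ always, which would also work) and the verification that the lift $F$ is genuinely Lipschitz on $\G^d$ rather than merely continuous. Neither is difficult.
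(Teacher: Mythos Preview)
Your proposal is correct and follows essentially the same approach as the paper: lift $\phi\chi$ through a suitable $2$-dimensional $\QM$-system to a Lipschitz function $F$ on $\G^d$, then use that Lipschitzness forces $g = F\circ\Phi$ to be nearly constant on the atoms at resolution $R$. The only cosmetic differences are that the paper encodes the linear part of the phase via the second slot of the $\QM$-system (taking $F(\theta_1,\theta_1',z_1;\theta_2,\theta_2',z_2)=e(\theta_1+\theta_2')z_1$) rather than introducing $(s/2)x^2/p$ as you do, and that the paper directly bounds $\|\Pi_R^{\Phi}g - g\|_\infty \ll R^{-1}$ instead of passing through an auxiliary $F_R$ and self-adjointness --- your detour through $F_R$ is correct but unnecessary, since $\Pi_R^{\Phi}g$ already averages $g$ over atoms.
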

\emph{Remark.} The functions of the form $\Pi^{\Phi}_R g$ where $\Vert g \Vert_{\infty} \leq 1$, are precisely those functions which are bounded by $1$ in modulus and are constant on atoms $A_{R;t,u,v}$.
\begin{proof}
It follows from the definition of the $\QM$-norm (Definition \ref{def.qmnorm}) that, under the hypotheses on $f$, there are $a_1, a_2$ and a multiplicative function $\psi$ such that 
\[ |\E_x f(x) \overline{e_p(a_1x^2 + 2a_2x) \psi(x)}| \geq \delta.\]
Consider the $\QM$-system $\Phi = (a_i x^2/p, 2a_i x/p, \psi_i(x))_{i = 1,2}$ in which $\psi_1 = \psi_2 = \psi$. Then
\[ \E_x f(x) \overline{e_p(a_1x^2 + 2a_2x) \psi(x)} = \langle f, g\rangle,\]
where $g = F \circ \Phi$ with $F : \G^2 \rightarrow \C$ defined by
\[ F(\theta_1, \theta'_1, z_1; \theta_2, \theta'_2, z_2) := e(\theta_1 + \theta'_2)z_1.\]
Thus
\begin{equation}\label{eq521} |\langle f, F \circ \Phi \rangle| \geq \delta.\end{equation}

Now since $F$ is quite a smooth function, we have $g \approx \Pi_R^{\Psi} g$. More precisely,
\[ \Vert \Pi_R^{\Phi} g - g\Vert_{\infty}  \leq \sup_{x' \in A(x)} |F(\Phi(x')) - F(\Phi(x))|  \ll R^{-1},\]
since the Lipschitz constant of $F$ is $O(1)$.
It follows from this and \eqref{eq521} that if $R > C\delta^{-1}$ with $C$ large enough then $|\langle f, \Pi_R^{\Phi} g\rangle| \gg \delta$, and the result follows.
\end{proof}

The next lemma is a result of ``Koopman von Neumann'' type. In establishing this we shall need the following property of the projection operators $\Pi^{\Psi}_R$: if $\Psi' \supset \Psi$ and $R | R'$, then
\begin{equation}\label{nesting} \Pi^{\Psi}_R \Pi^{\Psi'}_{R'} f = \Pi^{\Psi}_{R} f.\end{equation}
This is because each atom associated to $\Pi_R^{\Psi}$ is a union of atoms associated to $\Pi_{R'}^{\Psi'}$. 

\begin{lemma}\label{kvn}
Suppose that $f_1,\dots, f_r : \F \rightarrow \C$ are such that $\|f_i\|_\infty \leq 1$ for all $i \in \{1,\dots,r\}$. Let $\Psi$ be a $\QM$-system, and let $R \geq C\delta^{-1}$ be a power of $2$.  Then there is a $\QM$-system $\Psi' \supset \Psi$ with $\dim\Psi'\leq \dim\Psi + O( r\delta^{-2})$ such that $\Vert f_i - \Pi_{R}^{\Psi'} f_i\Vert_{\QM} \leq \delta$ for all $i \in\{ 1,\dots, r\}$.
\end{lemma}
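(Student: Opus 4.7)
The plan is to run a standard energy-increment argument, adding at most two new coordinates to the $\QM$-system at each step. Starting from $\Psi_0 := \Psi$, we define $\QM$-systems $\Psi_0 \subset \Psi_1 \subset \Psi_2 \subset \dots$ iteratively. Suppose $\Psi_k$ has been defined. If every $i \in \{1,\dots,r\}$ already satisfies $\|f_i - \Pi_R^{\Psi_k} f_i\|_{\QM} \leq \delta$, we stop and take $\Psi' := \Psi_k$. Otherwise, fix some $i$ for which $\|f_i - \Pi_R^{\Psi_k} f_i\|_{\QM} > \delta$. Since projection operators contract $\|\cdot\|_\infty$, the function $h_i := \tfrac12(f_i - \Pi_R^{\Psi_k} f_i)$ satisfies $\|h_i\|_\infty \leq 1$ and $\|h_i\|_{\QM} > \delta/2$. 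Applying Lemma \ref{lem5.2} to $h_i$ (with $R \geq C\delta^{-1}$, which is valid by enlarging $C$), we obtain a $\QM$-system $\Phi$ of dimension at most $2$ and a function $g$ with $\|g\|_\infty \leq 1$ such that $|\langle f_i - \Pi_R^{\Psi_k} f_i, \Pi_R^\Phi g\rangle| \gg \delta$. Then we define $\Psi_{k+1}$ to be the join of $\Psi_k$ and $\Phi$ (concatenation of the coordinates), so $\Psi_{k+1} \supset \Psi_k$ and $\dim \Psi_{k+1} \leq \dim \Psi_k + 2$, and crucially the partition $\mathcal{I}_R$ associated to $\Psi_{k+1}$ refines those associated to both $\Psi_k$ and $\Phi$.

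The key point is that this procedure causes the energy $\sum_i \|\Pi_R^{\Psi_k} f_i\|_2^2$ to grow by $\gg \delta^2$ at each step. Indeed, since the atoms of $\Pi_R^{\Psi_{k+1}}$ refine those of $\Pi_R^{\Phi}$, the function $\Pi_R^{\Phi} g$ is constant on atoms of $\Pi_R^{\Psi_{k+1}}$, and hence by self-adjointness of $\Pi_R^{\Psi_{k+1}}$ together with \eqref{nesting} we have
\begin{equation*}
\langle f_i - \Pi_R^{\Psi_k} f_i,\, \Pi_R^\Phi g\rangle = \langle \Pi_R^{\Psi_{k+1}}f_i - \Pi_R^{\Psi_k} f_i,\, \Pi_R^\Phi g\rangle.
\end{equation*}
Since $\|\Pi_R^{\Phi} g\|_2 \leq 1$, Cauchy--Schwarz gives $\|\Pi_R^{\Psi_{k+1}}f_i - \Pi_R^{\Psi_k} f_i\|_2 \gg \delta$. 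By Pythagoras (available because $\Pi_R^{\Psi_{k+1}}$ refines $\Pi_R^{\Psi_k}$, so $\Pi_R^{\Psi_k} \Pi_R^{\Psi_{k+1}} = \Pi_R^{\Psi_k}$), this equals $(\|\Pi_R^{\Psi_{k+1}}f_i\|_2^2 - \|\Pi_R^{\Psi_k} f_i\|_2^2)^{1/2}$, giving the promised energy increment of $\gg \delta^2$ for that particular $i$, and none of the other energies $\|\Pi_R^{\Psi_{k+1}} f_j\|_2^2$ decrease (again by refinement).

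Since $\|\Pi_R^{\Psi_k} f_i\|_2^2 \leq \|f_i\|_\infty^2 \leq 1$, the total energy $\sum_i \|\Pi_R^{\Psi_k} f_i\|_2^2$ is bounded above by $r$, so the iteration must terminate after at most $O(r\delta^{-2})$ steps. Each step increases the dimension by at most $2$, so the final $\Psi'$ has $\dim \Psi' \leq \dim \Psi + O(r\delta^{-2})$ as required. The main technical point to watch is ensuring that $\Pi_R^\Phi g$ really is measurable with respect to the $\sigma$-algebra generated by $\Pi_R^{\Psi_{k+1}}$ (to chain the two projections together), which is where the choice of $R$ as a fixed power of $2$ and the refinement property of the $\mathcal{I}_R$ partitions play their role.
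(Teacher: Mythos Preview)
Your proof is correct and follows essentially the same energy-increment argument as the paper: iteratively append the $2$-dimensional $\QM$-system $\Phi$ produced by Lemma~\ref{lem5.2}, use the refinement property \eqref{nesting} to pass the inner product through $\Pi_R^{\Psi_{k+1}}$, and then Cauchy--Schwarz plus Pythagoras to extract an energy increment of $\gg \delta^2$. Your explicit rescaling $h_i = \tfrac12(f_i - \Pi_R^{\Psi_k} f_i)$ to ensure $\|h_i\|_\infty \leq 1$ before invoking Lemma~\ref{lem5.2} is in fact slightly more careful than the paper's own exposition, which applies the lemma directly to $f_i - \Pi_R^{\Psi_j} f_i$ without noting that its sup-norm may be as large as $2$.
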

\begin{proof}
 Define a nested sequence $\Psi =: \Psi_0 \subset \Psi_1 \subset \Psi_2 \subset \dots$ of $\QM$-systems with $\dim \Psi_j = \dim \Psi + 2j$ in the following manner. For $j = 0,1,2,3,\dots$, proceed as follows. Set $f_{i,j} := f_i - \Pi_R^{\Psi_j} f_i$. If $\Vert f_{i,j} \Vert_{\QM} \leq \delta$ for $i \in \{ 1,\dots, r\}$ then stop; otherwise, by Lemma \ref{lem5.2}, there is an $i \in \{1,\dots,r\}$, some $\QM$-system $\Phi$ of dimension $2$ and a function $g \in L^{\infty}(\F)$ with $\Vert g \Vert_{\infty} \leq 1$, such that $|\langle f_{i,j}, \Pi_R^{\Phi} g \rangle| \gg \delta$. In this case, set $\Psi_{j+1} := \Psi_j \cup \Phi$ and note by idempotence of $\Pi_R^\Phi$ and (\ref{nesting}) we have
 \begin{equation*}
 \langle f_i - \Pi_R^{\Psi_{j+1}}f_i,\Pi_R^{\Phi} g\rangle = \langle \Pi_R^{\Phi} f_i - \Pi_R^{\Phi}\Pi_R^{\Psi_{j+1}}f_i, \Pi_R^{\Phi} g\rangle=  0,
 \end{equation*}
 and hence
 \begin{equation*}
 \langle \Pi_R^{\Psi_{j+1}}f_i - \Pi_R^{\Psi_{j}}f_i,\Pi_R^{\Phi} g\rangle = \langle f_{i,j},\Pi_R^{\Phi} g\rangle -  \langle f_i - \Pi_R^{\Psi_{j+1}}f_i,\Pi_R^{\Phi} g\rangle  \gg \delta.
 \end{equation*}
By the Cauchy-Schwarz inequality we conclude
\[ \Vert \Pi_R^{\Psi_{j+1}} f_i - \Pi_R^{\Psi_{j+1}} f_i \Vert_2 \gg \delta,\]
and so (expanding out the $L_2$-norm square and using (\ref{nesting})) it follows that 
\[ \Vert \Pi_R^{\Psi_{j+1}}f_i \Vert_2^2 - \Vert \Pi_R^{\Psi_j} f_i\Vert_2^2 = \Vert \Pi_R^{\Psi_{j+1}}f_i -  \Pi_R^{\Psi_j}f_i \Vert_2^2 \gg \delta^{2}.\]
Hence, defining the energy
\[ E_j := \sum_{i = 1}^r \Vert \Pi_R^{\Psi_j} f_i \Vert_2^2,\] we have
\[ E_{j+1} - E_j \gg \delta^{2}.\]
As long as this process continues, we obviously have the trivial bound $E_j \leq r$. It follows that the process terminates after at most $O(r\delta^{-2})$ steps, and the result follows. 
\end{proof}

For $f:\F \rightarrow [0,1]$, the function $\Pi_R^{\Psi} f$ is, by definition, constant on the atoms $A_{R;t,u,v}$. In particular it factors through $\G^d$ so we have $\Pi_R^{\Psi} f(x) = F_0 \circ \Psi(x)$, for a certain function $F_0 : \G^d \rightarrow [0,1]$. We shall need control of the trig-norm of $F_0$ in applications which, as things stand, may be infinite.  Since the characters of $\G^d$ form a (Schauder) basis for the functions on $\G^d$, the characters composed with $\Psi$ span all functions $\F \rightarrow [0,1]$.  This space of functions is finite dimensional and so the $F_0$s above may be taken to have finite trig-norm.  Making this bound on the trig-norm uniform in the function requires an additional argument, and we need a bound uniform in the function \emph{and} $p$.  This is a routine, if technical, endeavour. 
\begin{lemma}\label{lem5.4}
There are monotonic functions $M_0:(0,1]\times \Z_{\geq 0} \times \R_{>0}\rightarrow \R_{\geq 1}$, and $p_3:(0,1] \times \R_{>0}\times \Z_{\geq 0} \rightarrow \N$ such that if $p \geq p_3(\eps,d,R)$ then the following holds.

For all $f : \F \rightarrow [0,1]$, $d$-dimensional $\QM$-systems $\Psi$, and parameters $R > 0$ a power of $2$, and $\eps \in (0,1]$ there is a function $F : \G^d \rightarrow \R_{\geq 0}$ such that  
\begin{enumerate}
\item[(i)] $F \circ \Psi \geq \Pi_R^{\Psi}f$ pointwise;
\item[(ii)] $\Vert F \Vert_{\trig} \leq M_0(\eps, d, R)$;
\item[(iii)] $\Vert F \circ \Psi - \Pi_R^{\Psi}f \Vert_2 \leq \eps/2$, and
\item[(iv)] $\Vert F\circ \Psi\Vert_4 \leq 2$.
\end{enumerate}
\end{lemma}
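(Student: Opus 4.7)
The plan is to construct $F$ as a weighted sum of trigonometric polynomial majorants of the box indicators $1_{I_{R;t,u,v}}$, with weights given by the step values of $\Pi_R^\Psi f$. Specifically, for each index $(t,u,v)\in\{0,\dots,R-1\}^{3d}$ I would use classical harmonic analysis (Fej\'er / Jackson / Beurling--Selberg type constructions, tensored across the $3d$ coordinates of $\G^d$) to build a non-negative trigonometric polynomial $\chi^+_{t,u,v}:\G^d\to\R_{\geq 0}$ with $\chi^+_{t,u,v}\geq 1_{I_{R;t,u,v}}$ pointwise, trig norm bounded by some $M_1(d,R,\eta')$, and $L^1(\G^d)$-excess $\int(\chi^+_{t,u,v}-1_{I_{R;t,u,v}})\,d\mu_{\G^d}\leq \eta' R^{-3d}$, for a small parameter $\eta'$ to be fixed later in terms of $\eps,d,R$. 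Setting $c_{t,u,v}\in[0,1]$ to be the constant value of $\Pi_R^\Psi f$ on the atom $A_{R;t,u,v}$ (and $0$ if the atom is empty), I then define $F:=\sum_{t,u,v}c_{t,u,v}\chi^+_{t,u,v}$.

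Property (i) is then immediate: for $x\in A_{R;t_0,u_0,v_0}$, $F(\Psi(x))\geq c_{t_0,u_0,v_0}\chi^+_{t_0,u_0,v_0}(\Psi(x))\geq c_{t_0,u_0,v_0}=\Pi_R^\Psi f(x)$, and property (ii) is the subadditivity bound $\|F\|_{\trig}\leq\sum c_{t,u,v}\|\chi^+_{t,u,v}\|_{\trig}\leq R^{3d}M_1$, which I take as $M_0(\eps,d,R)$. For (iii) and (iv), the main observation is the pointwise inequality
\[0\;\leq\;F\circ\Psi-\Pi_R^\Psi f\;\leq\;(G-1)\circ\Psi,\]
where $G:=\sum_{t,u,v}\chi^+_{t,u,v}$ (which uses $c_{t,u,v}\leq 1$ and $\sum_{t,u,v}1_{A_{R;t,u,v}}=1$); combined with $\Pi_R^\Psi f\leq 1$, both bounds reduce to controlling $\|(G-1)\circ\Psi\|_k$ for $k=2,4$. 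These I estimate via Proposition~\ref{distribution} applied to the trigonometric polynomials $(G-1)$ and $(G-1)^2$, which transfers the $\F$-averages into $\int(G-1)^k\,d\mu_{H_\Psi}+o_{p\to\infty}(\|G\|_{\trig}^k)$; the Haar integrals on $H_\Psi$ are then controlled by sandwiching each $1_{I_{R;t,u,v}}$ between trig-polynomial upper and lower approximants $\chi^\pm_{t,u,v}$ (each with $L^1(\G^d)$-excess $\leq\eta' R^{-3d}$) and running Proposition~\ref{distribution} in reverse, eventually reducing to the total $L^1(\G^d)$-excess $\leq\eta'$. Choosing $\eta'$ small in terms of $\eps,d,R$ and taking $p\geq p_3(\eps,d,R)$ large enough gives (iii) and (iv).

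The main obstacle is the transfer step from $\mu_{\G^d}$-control (which is what the majorant construction directly supplies) to $\mu_{H_\Psi}$-control (which is what actually enters the $L^2$-estimate), since $H_\Psi$ can be a proper, and highly twisted, subgroup of $\G^d$. The $\sqrt 2$ offset in the definition of $I_{R;t,u,v}$ is precisely what makes the sandwich argument work: the irrationality of $\sqrt 2$ keeps images $\Psi(x)$ uniformly away from box boundaries, so the upper and lower trig approximants $\chi^\pm_{t,u,v}$ agree on $\Psi(\F)$ to within an error that tends to zero as $p\to\infty$, a point made explicit in the estimate \eqref{edge-bound-3} alluded to in the introduction of this section. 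Monotonicity of $M_0$ and $p_3$ in the sense described after Corollary~\ref{cor3.3} is then enforced in the usual way, by setting each equal to the supremum of its values over arguments at least as extreme.
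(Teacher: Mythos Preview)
Your construction --- $F = \sum_{t,u,v} c_{t,u,v}\,\chi^+_{t,u,v}$ with $c_{t,u,v}$ the step values of $\Pi_R^\Psi f$ --- is exactly the paper's, and you have correctly isolated the $\sqrt 2$ boundary estimate \eqref{edge-bound-3} as the crux. But two points in your write-up don't hold together.

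First, the only property you demand of $\chi^+_{t,u,v}$ is small $L^1(\G^d)$-excess, and this is not enough. Small $\int_{\G^d}(\chi^+ - 1_I)$ says nothing about $(\chi^+ - 1_I)\circ\Psi$ in $L^2(\mu_\F)$, precisely because of the obstacle you yourself name: $H_\Psi$ may be a proper subgroup on which $\chi^+ - 1_I$ happens to be large. What is actually needed (and what the paper's $F_{R;t,u,v}$ satisfy, and what a Beurling--Selberg or convolved-indicator construction would also give) is a \emph{pointwise} property: $\chi^+_{t,u,v}$ is $\leq \eps/10R^{3d}$ outside a slight enlargement $I^+_{R;t,u,v}$ of the box, and bounded everywhere. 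With that in hand, for any $x$ with $\Psi(x)$ outside the ``boundary strip'' $E := \bigcup(I^+_{R;t,u,v}\setminus I^-_{R;t,u,v})$ one has $|G(\Psi(x)) - 1| \leq \eps/10$ directly.

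Second, the route through Proposition~\ref{distribution} and $\mu_{H_\Psi}$ is a detour that does no work. Once one has the pointwise property above, there is no need to pass to $H_\Psi$ at all: the boundary estimate is a statement about $\mu_\F$, namely $\mu_\F(\Psi^{-1}(E))$ is small (this is Claim~A in the paper, with \eqref{edge-bound-3} handling the multiplicative coordinate via the irrationality of $\sqrt 2$). On the complement of $\Psi^{-1}(E)$ one has $|F\circ\Psi - \Pi_R^\Psi f| \leq \eps/10$ pointwise; on $\Psi^{-1}(E)$ one uses the trivial bound $|F\circ\Psi - \Pi_R^\Psi f| \leq 1 + 2R^{3d}$. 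This gives (iii) and (iv) by a direct $L^2$/$L^4$ splitting, with no appeal to equidistribution. Your ``running Proposition~\ref{distribution} in reverse'' would, if unwound, reduce to exactly this direct argument anyway --- but as written it looks circular, since you are trying to control $\int (G-1)^k\,d\mu_{H_\Psi}$ by information that is itself an $\F$-average.

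In short: same construction, same key estimate, but you should ask for pointwise (not just $L^1$) control on the majorants, and drop the $H_\Psi$ step in favour of the direct $\mu_\F$ argument the paper uses.
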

\begin{proof}
Set 
\begin{equation}\label{eta-def}
\eta := \left(\frac{\eps}{100 (1+2R^{3d})^4R d}\right)^4.
\end{equation}
For each $t,u,v \in \{0,1,\dots, R-1\}^d$ we define an ``$\eta$-enlargement'' and an ``$\eta$-reduction'' of $I_{R;t,u,v}$ by
\begin{align*} &
I_{R;t,u,v}^\pm  := \big\{ (\theta, \phi, z) \in \G^d : \big\|\theta_j -\frac{2t_j+1}{2R} - \sqrt{2}\big\|_{\R/\Z}  < \frac{1}{2R} \pm \eta, \big\|\phi_j -\frac{2u_j+1}{2R} - \sqrt{2}\big\|_{\R/\Z}  < \frac{1}{2R} \pm \eta, \\ & \qquad \big\| \frac{1}{2\pi i}\log z_j -\frac{2v_j+1}{2R} - \sqrt{2}\big\|_{\R/\Z}  < \frac{1}{2R} \pm \eta \big\} \quad \mbox{for} \quad j \in \{1,\dots,d\} \big\}.
\end{align*}

We then define the ``boundary'' to be
\begin{equation*}
E:=\bigcup\{I_{R;t,u,v}^+ \setminus I_{R;t,u,v}^{-}: t,u,v \in \{0,\dots,R-1\}^d\},
\end{equation*}
and make two claims:\vspace*{8pt}

\noindent \textbf{Claim A:} If $p \geq C\eta^{-4}$ then 
\begin{equation*}
|\Psi^{-1}(E)| \leq \eps^2 p/10(1+2R^{3d})^4;
\end{equation*}\vspace*{8pt}

\noindent \textbf{Claim B:} 
If $z \in I_{R;t,u,v}^-\cap I_{R;t',u',v'}^+$ then $(t',u',v')=(t,u,v)$. \vspace*{8pt}

We shall establish these claims later, but give the rest of the proof first.

Since the functions on $\G^d$ with bounded trig norm are dense in $C(\G^d)$ we see that there are functions $F_{R;t,u,v} : \G^d \rightarrow \R$ and a function $M_*$ with the following properties:
\begin{enumerate}
\item $0 \leq F_{R;t,u,v} \leq 1+\eps/10R^{3d}$ pointwise;
\item $F_{R;t,u,v}(z) \geq 1$ for all $z \in I_{R;t,u,v}$;
\item $|F_{R;t,u,v}(z)| \leq \eps/10R^{3d}$ for all $z \not \in I_{R;t,u,v}^+$;
\item $\Vert F_{R;t,u,v} \Vert_{\trig} \leq M_*(\eps, d, R)$.
\end{enumerate}
The function $M_*$ need not be monotonic but this can be quickly fixed.  For reasons that will become clear later we shall in fact put
\[
M_0(\eps,d,R) := \max\{1,R^{3d}\max\{M_*(2^{-l}, d_*, R_*): l,d_*,R_* \in \N, 2^{1-l} \geq \eps, d_* \leq d, R_* \leq R\}\},
\]
which \emph{is} monotonic.

The function $\Pi_R^{\Psi} f$ is constant on atoms $A_{R;t,u,v}$. If $A_{R;t,u,v}$ is non-empty then write $\lambda_{R;t,u,v}$ for the value of $\Pi_R^{\Psi} f$ on this set and if $A_{R;t,u,v}=\emptyset$ we set $\lambda_{R;t,u,v}=0$, so that the $\lambda_{R;t,u,v}$s are all non-negative. Then we define
\[ F := \sum_{t,u,v \in \{0,1,\dots,R-1\}^d } \lambda_{R;t,u,v} F_{R;t,u,v},\]
and
\[ F_0 := \sum_{t,u,v \in \{0,1,\dots, R-1\}^d} \lambda_{R;t,u,v} 1_{I_{R;t,u,v}},\] 
It follows that
\[ \Pi_R^{\Psi} f = F_0 \circ \Psi,\]
and since the $F_{R;t,u,v}$s and the $\lambda_{R;t,u,v}$s are non-negative, by (2) above we have
\begin{equation*}
F = \sum_{t,u,v \in \{0,1,\dots,R-1\}^d } \lambda_{R;t,u,v} F_{R;t,u,v} \geq  \sum_{t,u,v \in \{0,1,\dots, R-1\}^d} \lambda_{R;t,u,v} 1_{I_{R;t,u,v}} = F_0.
\end{equation*} 
We now show that $F$ satisfies the relevant properties. 
\begin{enumerate}
\item[(i)] This follows immediately since $F \geq F_0$ pointwise and so $F\circ \Psi(x) \geq F_0\circ \Psi(x) =\Pi_R^{\Psi}f(x)$.
\item[(ii)] Since $\Vert \cdot \Vert_{\trig}$ satisfies (\ref{subadd}) and (\ref{scale}), and $|\lambda_{R;t,u,v}| \leq 1$, (4) tells us that
\begin{equation*}
\|F\|_{\trig} \leq \sum_{t,u,v}{\max\{1,|\lambda_{R;t,u,v}|\}\|F_{R;t,u,v}\|_{\trig}} \leq R^{3d}M_*(\eps,d,R) \leq M_0(\eps,d,R)
\end{equation*}
as required.
\item[(iii)]  Suppose that $z \not \in E$.  Then, since $\{ I_{R;t,u,v}^+: t,u,v\in \{0,\dots, R-1\}^d\}$ covers $\G^d$ we see that $z \in I_{R;t,u,v}^-$ for some $t,u,v \in \{0,\dots,R-1\}^d$, and so
\begin{equation*}
F_0(z)=\lambda_{R;t,u,v} 1_{I_{R;t,u,v}}(z).
\end{equation*}
By Claim B we have that $z \not \in I_{R;t',u',v'}^+$ for any $(t',u',v')\neq (t,u,v)$.  Hence by (3) we have $|F_{R;t',u',v'}(z)| \leq \eps/10R^{3d}$ whenever $(t',u',v') \neq (t,u,v)$, and so
\begin{align*}
|F(z) - F_0(z)| & \leq |\lambda_{R;t,u,v}(F_{R;t,u,v}(z)- 1_{I_{R;t,u,v}}(z))|\\
& \qquad + \big|\sum_{(t',u',v') \neq (t,u,v) } \lambda_{R;t',u',v'} F_{R;t',u',v'}(z)\big| \leq \textstyle\frac{1}{10}\displaystyle \eps.
\end{align*}
On the other hand, if $z \in E$ then we just have the trivial bound
\begin{equation*}
|F(z)-F_0(z)| \leq \big|\sum_{(t,u,v) } \lambda_{R;t,u,v} F_{R;t,u,v}\big| +|F_0(z)| \leq 2R^{3d}+1.
\end{equation*}
It follows that
\begin{equation*}
\|F\circ \Psi - f\|_{2}^2 = \|F\circ \Psi - F_0 \circ \Psi\|_{2}^2 \leq \E_{x \in \F}{\left((1+2R^{3d})^21_E(\Psi(x)) + (\eps/10)^2\right)},
\end{equation*}
and we have (iii) by Claim A.
\item[(iv)] Finally, using the above we have
\begin{align*}
\|F\circ \Psi \|_{4} & \leq \|F_0\circ \Psi\|_4 + \|F\circ \Psi-F_0\circ \Psi\|_4\\
& \leq 1 + \left( \E_{x \in \F}{(1+2R^{3d})^41_E(\Psi(x))} + (\eps/10)^4\right)^{1/4} \leq 2,
\end{align*}
from which (iv) follows.
\end{enumerate}
A suitable choice of $p_3$ can be made given the definition of $\eta$ and the hypothesis of Claim A.  We now turn to establishing the claims.
\begin{proof}[Proof of Claim A]
If $x \in \Psi^{-1}(E)$ then there are some $t,u,v \in \{0,\dots,R-1\}^d$ such that
\begin{equation*}
\Psi(x) \in I_{R;t,u,v}^+ \setminus I_{R;t,u,v}^-,
\end{equation*}
and so it is enough to establish the following statements for any $a \in \F^*$, non-trivial $\psi \in \widehat{\F^*}$ and for any interval $J \subset \R/\Z$ of the form $J = \frac{j}{R} + \sqrt{2} + (-\eta, \eta)+\Z$, $j \in \{0,1\dots, R-1\}$, we have
\begin{equation}\label{edge-bound-1}
\# \{ x \in \F : ax^2/p \in J \} \leq \frac{\eps^2 p}{100Rd(1+2R^{3d})^4},
\end{equation}
\begin{equation}\label{edge-bound-2}
\# \{ x \in \F : 2ax/p \in J \} \leq \frac{\eps^2 p}{100Rd(1+2R^{3d})^4},
\end{equation}
and
\begin{equation}\label{edge-bound-3}
\# \{ x \in \F : \frac{1}{2\pi i}\log \psi(x) \in J \} \leq \frac{\eps^2 p}{100Rd(1+2R^{3d})^4}.
\end{equation}
The claim then follows from allowing $j,a,\psi$ to range over all choices from the sets  $\{0,\dots, R-1\}$, $\{a_1,\dots, a_d\}$ and $\{\psi_1,\dots, \psi_d\}$ respectively. Of course we must still establish \eqref{edge-bound-1}, \eqref{edge-bound-2} and \eqref{edge-bound-3}.

\emph{Proof of \eqref{edge-bound-2}.} This is straightforward, as $2ax/p$ takes all the values $r/p$, $r \in \{ 0,1,\dots, p-1\}$, precisely once and so we have 
\[ \# \{x \in \F : 2ax/p \in J\} = p|J| + O(1).\] \eqref{edge-bound-2} follows immediately provided $p \geq C\eta^{-1}$.

\emph{Proof of \eqref{edge-bound-1}.} This follows in more-or-less the same way as \eqref{edge-bound-2}, using instead the fact that $ax^2/p$ takes each value $r/p$, $r \in \{0,1,\dots, p-1\}$, at most twice.

\emph{Proof of \eqref{edge-bound-3}.} 
The image of $\F$ under $\frac{1}{2\pi i}\log \psi$ is $\{0, \frac{1}{Q},\dots, \frac{Q-1}{Q}\}$ for some $Q$, and each point is hit the same number $\frac{p-1}{Q}$ of times as $x$ ranges over $\F^*$. The number of the points $\{0,\frac{1}{Q},\dots, \frac{Q-1}{Q}\}$ lying in $J$ is at most $1 + 2\eta Q$, and so \begin{equation}\label{first} \# \{x \in \F: \frac{1}{2\pi i} \log \psi(x) \in J\} \leq \frac{p-1}{Q}(1 + 2\eta Q) + 1.\end{equation}
Without a lower bound on $Q$, this is useless. To obtain a lower bound on $Q$, we assume that there exists at least one $x \in \F^*$ for which $\frac{1}{2\pi i}\log \psi(x) \in J$.  (Otherwise \eqref{edge-bound-3} really is trivial, provided $p \geq C\eta^{-1}$.) Suppose that for this point we have $\frac{1}{2\pi i}\log \psi(x) = \frac{q}{Q}$. Then we have $\Vert\frac{q}{Q} - \frac{j}{R} - \sqrt{2}\Vert \leq \eta$, and so $\Vert QR \sqrt{2} \Vert_{\R/\Z} \leq \eta QR$. On the other hand we have the well-known and elementary bound $\Vert m \sqrt{2} \Vert_{\R/\Z} \geq \frac{1}{3m}$ for all positive integers $m$, and thus $Q \geq \frac{1}{3R\sqrt{\eta}}$. Substituting back into \eqref{first} gives
\[ \# \{x : \frac{1}{2\pi i} \log \psi(x) \in J\} \leq 2\eta p +3Rp\sqrt{\eta} + 1,\] and \eqref{edge-bound-3} now follows because of the choice of $\eta$ in \eqref{eta-def} provided $p \geq C\eta^{-1}$.
\end{proof}
\emph{Proof of Claim B:}
We apply the triangle inequality for each $j \in \{1,\dots,d\}$ to get
\begin{equation*}
\big\|\frac{t_j-t_j'}{R}\big\|_{\R/\Z} \leq \big\|\theta_j -\frac{2t_j+1}{2R} - \sqrt{2}\big\|_{\R/\Z}+\big\|\theta_j -\frac{2t_j'+1}{2R} - \sqrt{2}\big\|_{\R/\Z}<\frac{1}{2R}+\eta + \frac{1}{2R}-\eta = \frac{1}{R}.
\end{equation*}
Thus
\begin{equation*}
\min\big\{ \frac{|t_j-t_j'|}{R}, \frac{|R+t_j-t_j'|}{R}, \frac{|t_j-t_j'-R|}{R}\big\} <\frac{1}{R}.
\end{equation*}
Since $-(R-1) \leq t_j-t_j' \leq R-1$ it follows that $t_j=t_j'$ for all $j \in \{1,\dots,d\}$ and similarly for $u$ and $u'$, and $v$ and $v'$.  The claim is proved, and this concludes the proof of Lemma \ref{lem5.4}.
\end{proof}

We are now ready for the proof of the regularity lemma itself.

\begin{proof}[Proof of Proposition \ref{regularity}]
First we have to define $M_\Omega$ and $D_\Omega$. Suppose that $r,d \in \Z_{\geq 0}$, and $\delta \in (0,1]$ and define sequences
\begin{equation*}
\textstyle\frac{1}{10}\displaystyle\delta = \delta_0 > \delta_1 > \dots \text{ and } d = d_0^{\max} < d_1^{\max} < \dots 
\end{equation*}
and auxiliary sequences
\begin{equation*}
R_0 < R_1 < \dots, \qquad  M_0^{\max}< M_1^{\max} < \dots, \qquad \text{ and } \; \; p_0^{\max} < p_1^{\max} < \dots 
\end{equation*}
defined by
\[ R_{j}:=2^{\lceil \log_2C\delta_{j}^{-1}\rceil},  M^{\max}_{j} := M_0(\eps,d_j^{\max},R_j)\text{ and }p_j^{\max}:=p_3(\eps,d_j^{\max},R_j),\]
(where $M_0$ and $p_3$ are the functions appearing in Lemma \ref{lem5.4})
and the following recursive rules:
\[ \delta_{j+1} := \min\{1/\Omega\big(r,d,\delta,M^{\max}_j, d^{\max}_j   \big),\delta_0\} \text{ and }d^{\max}_{j+1} := d^{\max}_j + \lceil Cr \delta_{j+1}^{-2}\rceil.\]

By induction on $j$ and monotonicity of $\Omega$ and $M_0$ we can inductively check that the functions $\delta_j$, $R_j$, $d_j^{\max}$, $M_j^{\max}$, $p_j^{max}$ are monotonic functions of $r,d,\delta,\eps,j$ in the sense introduced at the end of Section \ref{sec3}. Set \begin{equation}\label{J-def} J := \lceil Cr/\eps^2 \rceil\end{equation} and define
\begin{equation*}
M_\Omega(r,d,\delta,\eps):=M_J^{\max}, D_\Omega(r,d,\delta,\eps):=d_J^{\max} \text{ and } p_\Omega(r,d,\delta,\eps):=p_J^{\max}.
\end{equation*}
These are also monotone functions.

With these definitions in hand we are ready for the proof. Suppose that $p \geq p_\Omega(r,d,\delta,\eps)$.  Suppose, further, that $\Psi$ is a $d$-dimensional $\QM$-system of width $\delta$, that $B:=B(\Psi,\delta)$, that $\tilde B \subset B$ has $|\tilde B| \geq (1-\frac{\eps^2}{100})|B|$, and finally that $c:\tilde{B} \rightarrow [r]$ is an $r$-colouring.  We extend $c : \tilde B \rightarrow [r]$ to a full colouring $\overline{c} : B \rightarrow [r]$ in some arbitrary way such that 
\begin{equation}\label{eqn.c}
\Vert 1_{c^{-1}(i)} - 1_{\overline{c}^{-1}(i)} \Vert_2 \leq \frac{\eps}{10} \text{ for all }i \in [r].
\end{equation}
(This could be done, for example, by defining $\overline{c} \equiv 1$ identically on $B \setminus \tilde B$.) 

We shall apply the Koopman von Neumann lemma (Lemma \ref{kvn}) $J$ times (where $J$ is given by \eqref{J-def}), on each occasion with functions $f_i := 1_{\overline{c}^{-1}(i)}$, for $i \in [r]$, extended to functions on all of $\F$ by setting $f_i(x) = 0$ if $x \notin B$. On the $j$th such application we apply Lemma \ref{kvn} with parameter $R_j$, obtaining nested $\QM$-systems $\Psi = :\Psi_0 \subset \Psi_1 \subset \Psi_2 \subset \dots$ with 
\begin{equation}\label{dim-growth}
\dim \Psi_{j} \leq d^{\max}_{j}
\end{equation}
such that 
\begin{equation}\label{eq686}
\Vert f_i - \Pi_{R_j}^{\Psi_j} f_i \Vert_{\QM} \leq \delta_j
\end{equation}
for all $i \in \{ 1,\dots, r\}$. It is convenient to write $P_j := \Pi^{\Psi_j}_{R_j}$ for short; it is also convenient to write $\mathcal{B}_j$ for the $\sigma$-algebra on $\F$ generated by $\{\Psi_j^{-1}(I_{R_j; t, u, v}):t,u,v \in \{0,\dots,R-1\}^d\}$. Thus $P_j = \E( \cdot | \mathcal{B}_j)$. Note that, since $\Psi_j \subset \Psi_{j+1}$ and $R_j | R_{j+1}$ (since $R_j$ and $R_{j+1}$ are powers of $2$), $\mathcal{B}_{j+1}$ is a refinement of $\mathcal{B}_j$ and hence
$P_j = P_j P_{j+1}$. By Pythagoras' theorem we have
\[ \Vert P_{j+1} f_i\Vert_2^2 - \Vert P_j f_i \Vert_2^2 = \Vert P_{j+1} f_i - P_j f_i \Vert_2^2,\] thus
\[ \sum_{j= 1}^J\sum_{i = 1}^r \Vert P_{j+1} f_i - P_j f_i \Vert_2^2 \leq r,\]
so by the pigeonhole principle and the choice of $J$ there is some $j \leq J$ such that 
\begin{equation}\label{eq477} \sum_{i = 1}^r \Vert P_{j+1} f_i - P_j f_i \Vert_2^2 \leq \frac{\eps^2}{100}.\end{equation}

Fix this value of $j$, and set $d_j := \dim \Psi_j$ (thus $d_j \leq d^{\max}_j$). Let $l \in \N$ be minimal such that $2^{-l} \leq \eps$, and apply Lemma \ref{lem5.4} to $f_i$ for each $i \in \{1,\dots,r\}$.  Since
\begin{equation*}
p \geq p_J^{\max} \geq p_j^{\max}= p_3(\eps,d_j^{\max},R_j) \geq p_3(\eps,d_j,R_j),
\end{equation*}
we get functions $F^j_i : \G^{d_j} \rightarrow [0,1]$ such that 
\begin{enumerate}
\item[(i)] $F^j_i \circ \Psi_j \geq P_j f_i$ pointwise;
\item[(ii)] $\Vert F^j_i \Vert_{\trig} \leq M_0(\eps, d_j, R_j) \leq M_j^{\max}$;
\item[(iii)] $\Vert F^j_i \circ \Psi_j - P_j f_i \Vert_2 \leq \eps/2$;
\item[(iv)] $\Vert F^j_i \circ \Psi_j  \Vert_4 \leq 2$.
\end{enumerate}

We are now ready to describe the functions $F_1,\dots, F_r, g_1,\dots, g_r$ in the regularity lemma. Set $d' := d_j$, $\Psi' := \Psi_j$, $F_i:=F_i^j$ for all $i \in [r]$, and
\begin{equation*}
g_i := (P_{j+1} f_i -f_i)+ 1_{c^{-1}(i)} = P_{j+1} f_i + 1_{c^{-1}(i)} - 1_{\overline{c}^{-1}(i)}
\end{equation*}
for all $i \in [r]$.  Note with this choice that the $g_i$s all map into $[-1,1]$ as required. We claim that these functions $F_i$ and $g_i$ do indeed verify (1) to (6) of the regularity lemma; we check these points in turn.

\emph{Point (1).}  This is immediate from (ii) above since
\[\Vert F_i\|_{\trig}=\Vert F_i^j \Vert_{\trig} \leq M_j^{\max} \leq M_J^{\max} =M_\Omega(r,d,\delta,\eps).\]

\emph{Point (2).} This is immediate from \eqref{dim-growth} since that tells us
\[d' \leq d_j^{\max}\leq d_J^{\max} = D_\Omega(r,d,\delta,\eps).\]

\emph{Point (3).} We know from point (iii) above, \eqref{eq477}, and \eqref{eqn.c} above that
\begin{align*}
\Vert F_i\circ \Psi' - g_i\|_2 & = \|F_i^j\circ \Psi_j - g_i\|_2\\
& \leq \|F_i^j\circ \Psi_j -P_jf_i\|_2\\
& \qquad +  \|P_jf_i - P_{j+1}f_i\|_2 + \|1_{c^{-1}(i)} - 1_{\overline{c}^{-1}(i)}\|_2\\
& \leq \frac{\eps}{2} + \sqrt{\frac{\eps^2}{100}} + \frac{\eps}{10}<\eps.
\end{align*}

\emph{Point (4).} We have 
\[ \Vert 1_{c^{-1}(i)} - g_i \Vert_{\QM} = \Vert f_i - P_{j+1} f_i \Vert_{\QM} \leq \delta_{j+1}.\] By the choice of $\delta_{j+1}$, this is at most $1/\Omega(r,d,\delta,M_j^{\max}, d_j^{\max})$.  Since $\|F_i\|_{\trig} \leq M_j^{\max}$ and $d' \leq d_j^{\max}$ and $\Omega$ is monotonic, it follows that this is at most $1/\Omega(r,d,\delta,\|F_i\|_{\trig},d')$ and (4) follows.

\emph{Point (5).} By design we have $\sum_{i=1}^r F_i \circ \Psi'(x) = \sum_{i = 1}^r F_i^j \circ \Psi_j(x)$. On the other hand, by (i) we have $F_i^j \circ \Psi_j \geq P_j f_i$ pointwise, and so summing over $i$ we have
\begin{align*}
\sum_{i=1}^r F_i \circ \Psi'(x) & = \sum_{i = 1}^r F_i^j \circ \Psi_j(x)\\
& \geq \sum_{i = 1}^r P_jf_i(x)\\
& = P_j(f_1 + \dots + f_r)(x)\\
& = P_j(1_{\overline{c}^{-1}(1)} + \dots + 1_{\overline{c}^{-1}(r)})(x) = P_j(1_{B})(x)
\end{align*}
since $P_j$ is linear.   However, since $R_j \geq 2\delta^{-1}$ every atom of $\mathcal{B}_j$ which meets $B(\Psi,\delta/2)$ is entirely contained in $B$, and so $P_j 1_B \geq 1$ pointwise on $B(\Psi,\frac{1}{2}\delta)$.

\emph{Point (6).}  This follows immediately from (iv).

The result is proved.
\end{proof}

\section{Some results in Ramsey theory}\label{sec7}

In this section we state and prove some auxiliary results of a Ramsey-theoretic nature, the main aim being to establish Proposition \ref{ramsey-prop}. A model for the type of result we are interested in is the following: if $\N \times \N$ is finitely coloured, then there is a monochromatic triple of distinct elements $(t_1, u)$, $(t_2, u)$, $(t_3,t_2 - t_1)$. Such a result is certainly true, and can be easily proved as follows.

Given an $r$-colouring of $[N]^2$, consider the colouring induced on $\{(1,1),\dots,(N,1)\}$ which is certainly at most an $r$-colouring.  By van der Waerden's theorem \cite{vdW} there is a monochromatic arithmetic progression
\begin{equation*}
\Delta:=\{(x,1),(x+d,1),\dots,(x+Md,1)\}
\end{equation*}
where $M \rightarrow \infty$ as $N \rightarrow \infty$ (for fixed $r$).  If there is some $t_3$ such that $(t_3,jd)$ has the same colour as $\Delta$ for some $1 \leq j \leq M$ then we have a suitable monochromatic triple, namely $(x,1),(x+jd,1),(t_3,jd)$.  Otherwise the box $[N] \times d[M]$ is $(r-1)$-coloured, and hence so is $(d,d)\cdot [M]^2$.  This forms the basis for an induction on the number of colours, thereby giving the result.

Of course this result is just the tip of the iceberg, and there is a vast generalisation available in recent work of Bergelson, Johnson, and Moreira \cite{bergelson-johnson-moreira}.  Indeed, \cite[Corollary 3.7]{bergelson-johnson-moreira} contains the above as a special case by taking (in the language of that result) $G:=\N_0^2$, $m:=1$, $c:G \rightarrow G$ to be the identity, and letting $F_1$ be the set containing the two maps
\begin{equation*}
\N_0^2 \rightarrow \N_0^2; (x,y) \mapsto (0,0) \text{ and } \N_0^2 \rightarrow \N_0^2; (x,y) \mapsto (y,0).
\end{equation*}
(Verifying that this is the same theorem requires a little work: the set $D(m, \vec{F}, c; \mathbf{s})$ in \cite[Definition 3.1]{bergelson-johnson-moreira} then consists of the elements $(s_{00}, s_{01})$, $(s_{10}, s_{11})$ and $(s_{01} + s_{10}, s_{11})$, which are of the form $(t_1, u)$, $(t_2, u)$, $(t_3,t_2 - t_1)$ with $t_1 = s_{10}$, $t_2 = s_{01} + s_{10}$, $t_3 = s_{00}$, $u = s_{11}$.)

While these extensions are clearly interesting we need to generalise our model in a different direction.  In particular, we need not just one monochromatic triple but many triples.  A result of this type extending Rado's theorem was established in \cite[Theorem 1]{franklgrahamrodl}, and extended to the case of torsion groups in \cite[Theorem 1.3]{serra-vena}.  In particular it is worth noting that \cite[Section 7.1]{serra-vena} identifies some difficulties that emerge in the presence of torsion.

We turn, now, to our arguments.  Let us recall the statement we are aiming to prove. 

\begin{ramsey-lemma}[Ramsey lemma] There is a monotonic function $\rho : \Z_{\geq 0} \times \Z_{\geq 0} \times (0,1] \rightarrow (0,1]$ with the following property. Suppose that $X$ and $Y$ are compact Abelian groups with Haar probability measures $\mu_X, \mu_Y$, that $\pi_X : X \rightarrow (\R/\Z)^{d}$ and $\pi_Y : Y \rightarrow (\R/\Z)^{d}$ are continuous homomorphisms, and that $F_1,\dots, F_r : X \times X \times Y \rightarrow \R_{\geq 0}$ are continuous functions with $\sum_{i=1}^r F_i(x_1, x_2, y) \geq 1$ whenever we have $| \pi_X(x_1)|,|\pi_X(x_2)|,|\pi_Y(y)| \leq \frac{1}{4}\delta$. Let $\mu = \mu_X \times \mu_X \times \mu_Y$. 
Then
\[ {\int F_i(t,u,v) F_i(t + u', u, v') F_i(t', u',v)} d\mu(t,u,v) d\mu(t',u',v') \geq \rho(r,d,\delta)
\] for some $i \in [r]$.
\end{ramsey-lemma}

We shall prove this via the following result.
\begin{proposition}\label{prop.mainramseydriver}
Suppose that $G$ is a compact Abelian group, that $T \subset G$ is open, and that $A_1,\dots,A_r$ are the measurable colour classes of some $r$-colouring on $T \times (T-T)$.  Then
\[ \sum_{i=1}^r \int{1_{A_i}(t_1,t_4-t_5)1_{A_i}(t_2,t_4-t_5)} 1_{A_i}(t_3,t_2-t_1)d\mu_T^{\otimes 5}(t_1,t_2,t_3,t_4,t_5) \geq r^{-O(r)}.
\]
\end{proposition}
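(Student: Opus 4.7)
The plan is to proceed by induction on the number of colours $r$, targeting the recursion $\rho_r \geq \rho_{r-1}/r^{O(1)}$, which iterates to the claimed bound $\rho_r \geq r^{-O(r)}$. The base case $r=1$ is immediate: then $A_1 = T \times (T - T)$ and the integrand is identically $1$.

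For the inductive step I would begin with a pigeonhole-plus-Cauchy-Schwarz move to select a ``popular'' colour. Writing $A_i^y := \{t \in T : (t, y) \in A_i\}$, the Cauchy-Schwarz inequality gives $\sum_i \mu_T(A_i^y)^2 \geq 1/r$ for every $y$, since $\sum_i \mu_T(A_i^y) = 1$. Averaging over $y = t_4 - t_5$ and taking the largest term yields a colour $i^* \in [r]$ with
\[
P := \E\bigl[1_{A_{i^*}^{t_4 - t_5}}(t_1)\, 1_{A_{i^*}^{t_4 - t_5}}(t_2)\bigr] \geq \frac{1}{r^2}.
\]

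Next I would run a dichotomy on slice densities. Fix $\eps := c\rho_{r-1}/r^{C}$ (for absolute constants $c, C$ to be chosen) and $D := \{d \in T - T : \mu_T(A_{i^*}^d) < \eps\}$. If at least half the mass of $P$ comes from pairs $(t_1, t_2)$ with $t_2 - t_1 \notin D$, then the colour-$i^*$ contribution alone is at least $\eps P/2 \geq \eps/(2r^2)$, which suffices. Otherwise $\E[1_{A_{i^*}^{y}}(t_1) 1_{A_{i^*}^{y}}(t_2) 1_D(t_2 - t_1)] \geq 1/(2r^2)$, and marginalising gives $\nu(D) \geq 1/(2r^2)$, where $\nu$ is the law of $t - s$ with $t, s \in T$ uniform. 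In this case I would apply dependent random choice to extract $T' \subset T$ with $\mu_G(T')/\mu_G(T) \geq r^{-O(1)}$ inside which colour $i^*$ is effectively absent: pick probes $s^*_1, \dots, s^*_k \in T$ and set $T' := \{s \in T : s - s^*_j \in D \text{ for all } j\}$, with $k$ and the probes chosen so that most pairs $(s, s') \in T' \times T'$ additionally satisfy $s - s' \in D$. On $T' \times (T' - T')$ the colour $i^*$ then has negligible total mass, so re-colouring its residue as any of the remaining $r-1$ colours yields a genuine $(r-1)$-colouring. Applying the inductive hypothesis to this restricted colouring produces $\rho_{r-1}$-many configurations, which transfer back to the original problem via the rescaling factor $(\mu_G(T')/\mu_G(T))^5 \geq r^{-O(1)}$, closing the induction.

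The main obstacle will be the dependent random choice step. The naive DRC construction above immediately guarantees $T' - s^*_j \subset D$ for each probe $j$, but does not by itself give $T' - T' \subset D$ or anything close to it; arranging that most differences inside $T' \times T'$ land in $D$ is where the genuine difficulty lies, and will likely require either iterating DRC through several layers or selecting the probes in a correlated way in the spirit of the dependent random choice used by Gowers in \cite{gowers-4ap}. The quantitative bookkeeping through this step, while preserving monotonicity of $\rho$, is what produces the $r^{O(1)}$ loss at each inductive level and hence the eventual $r^{-O(r)}$ bound rather than anything polynomially better.
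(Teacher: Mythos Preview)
Your skeleton --- induction on $r$, pigeonhole to a popular colour, a dichotomy, and DRC --- is the paper's skeleton, and your first branch is fine. The gap you flag in the second branch is real: forcing most of $T' - T'$ to land in the sparse-slice set $D$ via probes is the wrong target, and there is no clean way to arrange it (think of $D$ supported on ``large'' differences only). The paper resolves this not by making your DRC step work but by pointing DRC at a different object.

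Specifically, the paper applies DRC (Lemma~\ref{lem.gowersweakregularity}) to the bipartite system $X = T$, $Y = T - T$ with $A = A_i$ itself, where $i$ is the popular colour with $\delta_T(A_i) \geq 1/2r$. This produces $T' \subset T$ with $\mu_T(T') \geq 1/4r$ such that for all but an $\eta$-fraction of pairs $(t_1,t_2) \in T' \times T'$ there are many $u$ with $(t_1,u),(t_2,u) \in A_i$. The dichotomy is then simply on $\delta_{T'}(A_i)$. If $A_i$ is still moderately dense on $T' \times (T' - T')$, then a random $(t_1,t_2,t_3) \in (T')^3$ often has $(t_3, t_2 - t_1) \in A_i$, and the DRC conclusion supplies many $(t_4,t_5)$ completing the configuration; this gives $\Lambda_T(A_i)$ large directly. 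If $A_i$ is sparse there, one does \emph{not} try to eliminate colour $i$ entirely but simply throws it into an exceptional set and applies the inductive hypothesis on $T'$.

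For this to close, the paper strengthens the inductive statement to \emph{partial} colourings: colourings of $T \times (T-T)$ defined outside a measurable set $E$ with $\delta_T(E) \leq \eps_r$, for an explicit sequence $\eps_r = 2^{-7r+1}(r!)^{-3}$. This device is exactly what replaces your attempt to make colour $i^*$ genuinely absent on $T' \times (T'-T')$; one only needs $\delta_{T'}(A_i)$ to fall below the exceptional-mass budget $\eps_{r-1}$ at the next level. The single DRC application thus serves both branches, and the recursion $\eps_r \sim \eps_{r-1}/r^{O(1)}$ gives the claimed $r^{-O(r)}$.
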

Here, $\mu_T := \mu_G(T)^{-1} \mu_G$, where $\mu_G$ is the normalised Haar measure on $G$ which makes sense since $T$ is open and so is measurable and of positive measure.

We have written the bound here explicitly, first to show that it may be taken to be monotonically decreasing in $r$, and secondly because it is not too far from the right order.  Indeed, suppose $G=\F_2^r$ and we have a $(2r+1)$-colouring of the pairs $(t,u) \in G\times (G-G)$ with colour classes defined by
\begin{equation*}
A_0 := \F_2^r \times \{0\} \text{ and }A_i = \{(t,u) : u_1 = \dots = u_{i-1} = 0, u_i = 1, t_i = 0\},
\end{equation*}
and
\begin{equation*}
A_{r + i} = \{(t,u) : u_1 = \dots = u_{i-1} = 0, u_i = 1, t_i = 1\},
\end{equation*}
where in both cases $i$ ranges over $\{1,\dots,r\}$.  There are no triples $(t,u),(t',u),(t'',t+t')$ all lying in $A_i$ for any $i > 0$ since this would imply that $t_i=t_i'$ and so $(t+t')_i=0$, a contradiction.  Hence all the monochromatic triples lie in $A_0$, which implies that their total measure is at most $4^{-r}$.

Before turning to the proof of Proposition \ref{prop.mainramseydriver}, let us see how it implies Proposition \ref{ramsey-prop}.
\begin{proof}[Proof of Proposition \ref{ramsey-prop} assuming Proposition \ref{prop.mainramseydriver}]
Put $f_i:=\min\{F_i(x),1\}$ so that the $f_i$s take values in $[0,1]$, are continuous and have $\sum_i{f_i(x_1,x_2,y)} \geq 1$ whenever $| \pi_X(x_1)|,|\pi_X(x_2)|,|\pi_Y(y)| \leq \delta/4$.  We start by simplifying the dependence on $v$ and $v'$ by noting that since the $f_i$s all take values in $[0,1]$ we have, for each $i \in [r]$,
\begin{align}
\nonumber & \int f_i(t,u,v) f_i(t + u', u, v') f_i(t', u',v) d\mu(t,u,v) d\mu(t',u',v')\\
\nonumber & \geq \int{ f_i(t,u,v) f_i(t + u', u, v') f_i(t', u',v)}\\
\nonumber & \quad \times f_i(t,u,v') f_i(t + u', u, v) f_i(t', u',v') d\mu(t,u,v) d\mu(t',u',v')\\
\nonumber &  = \int{\left(\int{f_i(t,u,v)f_i(t+u',u,v)f_i(t',u',v)d\mu_Y(v)}\right)^2d\mu_{X}^{\otimes 4}(t,u,t',u')}\\ \label{eqn.cssimp} 
 & \geq \left(\int{f_i(t,u,v)f_i(t+u',u,v)f_i(t',u',v)d\mu_{X}^{\otimes 4}(t,u,t',u')d\mu_Y(v)}\right)^2,
\end{align}
the last step here being a consequence of the Cauchy-Schwarz inequality.
Let $B \subset X \times X$ be the set of pairs $(t,u)$ such that $|\pi_X(t)|, |\pi_X(u)| \leq \frac{1}{4}\delta$. For each $v \in Y$, define
\begin{equation*}
A_i^{(v)}:=\big\{(t,u)\in B: f_i(t,u,v) \geq 1/r \text{ and } (t,u) \not \in \bigcup_{j=1}^{i-1}{A_j^{(v)}}\big\}.
\end{equation*} 
By hypothesis we have
\begin{equation*}
\sum_i{f_i(t,u,v)} \geq 1 \text{ whenever } | \pi_X(t)|,|\pi_X(u)|,|\pi_Y(v)| \leq \textstyle\frac{1}{4}\displaystyle\delta
\end{equation*}
and so by averaging
\begin{equation*}
\bigcup_{i=1}^r{A_i^{(v)}} = B \text{ whenever } |\pi_Y(v)| \leq \textstyle\frac{1}{4}\displaystyle\delta.
\end{equation*}
Moreover since the $f_i$s and $\pi_X$ are continuous the sets $A_i^{(v)}$ are measurable. 
We claim (and shall prove later) that for any choice of measurable sets $(A_i)_{i=1}^r$ with $\bigcup_i{A_i} = B$ we have
\begin{equation}\label{77claim}
\sum_{i=1}^r{\int{1_{A_i}(t,u)1_{A_i}(t+u',u)1_{A_i}(t',u')d\mu_{X}^{\otimes 4}(t,u,t',u')}} \geq r^{-O(r)}(\delta/8)^{5d}.
\end{equation}
Assuming this for now, by \eqref{eqn.cssimp} and the design of the sets $A_i^{(v)}$ we have
\begin{align*}
 \sup_i&\int f_i(t,u,v) f_i(t + u', u, v') f_i(t', u',v) d\mu(t,u,v) d\mu(t',u',v') \\
& \geq \bigg(\sup_i \int{f_i(t,u,v)f_i(t+u',u,v)f_i(t',u',v)}d\mu_{X}^{\otimes 4}(t,u,t',u')d\mu_Y(v)\bigg)^2\\
& \geq \bigg(\frac{1}{r^4}\int {\left(\sum_{i=1}^r{\int{1_{A_i^{(v)}}(t,u)1_{A_i^{(v)}}(t+u',u)} 1_{A_i^{(v)}}(t',u')}d\mu_{X}^{\otimes 4}(t,u,t',u')\right)d\mu_Y(v)}\bigg)^2.
\end{align*}
By \eqref{77claim} and the comments before it, the inner integral is bounded below by $r^{-O(r)}(\delta/8)^{5d}$ uniformly for $v$ with $|\pi_Y(v)| \leq \frac{1}{4}\delta$. By Lemma \ref{pigeon-projection} we have
\[ \int_{|\pi_Y(v)| \leq \delta/4} d\mu_Y(v) \geq (\delta/4)^d.\]
Putting these facts together we get that
\begin{align*}
 \sup_i&\int F_i(t,u,v) F_i(t + u', u, v') F_i(t', u',v) d\mu(t,u,v) d\mu(t',u',v')\\
 & \geq \sup_i\int f_i(t,u,v) f_i(t + u', u, v') f_i(t', u',v) d\mu(t,u,v) d\mu(t',u',v') \\
& \geq \bigg(\sup_i \int{f_i(t,u,v)f_i(t+u',u,v)f_i(t',u',v)}d\mu_{X}^{\otimes 4}(t,u,t',u')d\mu_Y(v)\bigg)^2\\
& \geq \bigg(\frac{1}{r^4}\cdot r^{-O(r)}(\delta/8)^{5d}\cdot (\delta/4)^d\bigg)^2.\end{align*}
This tells us that we may define $\rho$ monotonically in the appropriate way, and concludes the proof of Proposition \ref{ramsey-prop} assuming Proposition \ref{prop.mainramseydriver}, except for the need to establish the claim \eqref{77claim}. We turn now to the proof of this claim. First of all we introduce a dummy integration over $X$, so the claim to be established becomes
\[ \sum_{i = 1}^r \int 1_{A_i}(t,u) 1_{A_i}(t + u', u) 1_{A_i}(t', u') d\mu_X^{\otimes 5}(t,u,t,u',v) \geq r^{-O(r)}(\delta/8)^{5d}.\]
Now make the change of variables $t = t_1$, $u = t_4 - t_5$, $t' = t_3$, $u' = t_2 - t_1$, $v = t_5$. This change of variables is invertible (by $t_1 = t$, $t_2 = t + u'$, $t_3 = t'$, $t_4 = u + v$, $t_5 = v$) and preserves the Haar measure, so it is enough to show that 
\[ \sum_{i = 1}^r \int 1_{A_i}(t_1, t_4 - t_5) 1_{A_i}(t_2, t_4 - t_5) 1_{A_i}(t_3, t_2 - t_1) d\mu_X^{\otimes 5}(t_1, t_2, t_3, t_4, t_5)\geq r^{-O(r)}(\delta/8)^{5d}.\]
Now we are told that $\bigcup_i A_i = B = \{(x,x') : |\pi_X(x)|, |\pi_X(x')| \leq \frac{1}{4}\delta\}$. It follows that the $A_i$ restrict to give an $r$-colouring of $T \times (T - T)$, where $T := \{x \in X : |\pi_X(x)| \leq \frac{1}{8}\delta\}$. Since $\mu_X(T) \geq (\delta/8)^d$ by Lemma \ref{pigeon-projection}, the claim now follows from Proposition \ref{prop.mainramseydriver}.
\end{proof}

The remaining task of the section, then, is to establish Proposition \ref{prop.mainramseydriver}. A key ingredient of this is a ``dependent random selection'' result of the type pioneered by Gowers \cite{gowers-4ap}.  It is given in almost this specific form as \cite[Lemma 6.17]{taovu} the proof of which is itself contained in \cite[Lemma 4.2]{sudszevu}. We need a weighted version of their result so we provide a self-contained proof, though the argument is more-or-less identical.
\begin{lemma}\label{lem.gowersweakregularity}
Suppose that $(X,\nu_X)$ and $(Y,\nu_Y)$ are probability spaces, $A \subset X \times Y$ is measurable with $(\nu_X \times \nu_Y)(A) = \alpha$, and let $\eta \in (0,1]$ be a parameter.  Then there is a measurable set $X' \subset X$ with $\nu_X(X') \geq \frac{1}{2}\alpha$ such that the set
\begin{equation*}
E:=\big\{(x_1,x_2) \in X \times X: \nu_Y\left(\left\{ y \in Y : (x_1, y), (x_2, y) \in A\right\}\right) \leq \textstyle\frac{1}{2}\displaystyle\eta \alpha^2\big\}
\end{equation*}
\textup{(}which is visibly measurable\textup{)} has $\nu_X^{\otimes 2}(E \cap (X' \times X')) \leq \eta \nu_X(X')^2$.
\end{lemma}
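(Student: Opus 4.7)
The plan is to apply \emph{dependent random choice}, exhibiting $X'$ as a fibre $X(y) := \{x \in X : (x,y) \in A\}$ over a carefully chosen $y \in Y$. (Such fibres are measurable for $\nu_Y$-almost every $y$ by Fubini.) Writing
\[ W(y) := \nu_X(X(y))^2 \quad \text{and} \quad Z(y) := \nu_X^{\otimes 2}\bigl( E \cap (X(y) \times X(y)) \bigr), \]
the proof will amount to averaging $W(y) - \eta^{-1} Z(y)$ over $y$ and picking a $y^*$ which beats the average.

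To estimate the two averages, first I would note that $\int_Y \nu_X(X(y))\, d\nu_Y(y) = \alpha$ by Fubini (since $(\nu_X \times \nu_Y)(A) = \alpha$), so Cauchy--Schwarz gives $\int_Y W(y)\, d\nu_Y(y) \geq \alpha^2$. For the second estimate, again by Fubini,
\[ \int_Y Z(y)\, d\nu_Y(y) \;=\; \int_{E} \nu_Y\bigl(\{y : (x_1, y),(x_2, y) \in A\}\bigr) \, d\nu_X^{\otimes 2}(x_1, x_2) \;\leq\; \tfrac{1}{2} \eta \alpha^2, \]
the bound coming directly from the definition of $E$. Subtracting shows that $\int_Y (W(y) - \eta^{-1} Z(y))\, d\nu_Y(y) \geq \tfrac{1}{2}\alpha^2 > 0$, so at least one $y^* \in Y$ achieves $W(y^*) - \eta^{-1} Z(y^*) \geq \tfrac{1}{2}\alpha^2$.

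Setting $X' := X(y^*)$ then finishes the job: the lower bound $W(y^*) \geq \tfrac{1}{2}\alpha^2$ (using $Z(y^*) \geq 0$) yields $\nu_X(X') \geq \alpha/\sqrt{2} \geq \alpha/2$, while the nonnegativity $W(y^*) \geq \eta^{-1} Z(y^*)$ rearranges at once to the desired $\nu_X^{\otimes 2}(E \cap (X' \times X')) = Z(y^*) \leq \eta \nu_X(X')^2$. I do not anticipate any genuine obstacle to this scheme; the only minor technicality is to verify measurability of $y \mapsto W(y)$ and $y \mapsto Z(y)$, which follows routinely from Fubini applied to $1_A$ and to $1_A(x_1,y)1_A(x_2,y)1_E(x_1,x_2)$ respectively.
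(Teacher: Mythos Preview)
Your proposal is correct and follows essentially the same dependent random choice argument as the paper: both take $X'$ to be a fibre $N_X(y^*)=\{x:(x,y^*)\in A\}$, use Cauchy--Schwarz to get $\int \nu_X(N_X(y))^2\,d\nu_Y(y)\geq\alpha^2$, use Fubini plus the definition of $E$ to bound $\int Z(y)\,d\nu_Y(y)\leq\tfrac12\eta\alpha^2$, and then pigeonhole on $W-\eta^{-1}Z$. The only cosmetic difference is that the paper phrases the averaging via the kernel $\nu_Y(N_Y(x_1)\cap N_Y(x_2))$ on $X\times X$ before switching the order of integration, whereas you work directly with $W(y)$ and $Z(y)$; the underlying computation is identical.
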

In words, a (weighted) proportion $1 - \eta$ of the ``edges'' in $X'$ have many common neighbours in $Y$.
\begin{proof}
For $x \in X$, write $N_Y(x) := \{y \in Y: (x,y) \in A\}$ and for $y \in Y$ write $N_X(y) := \{x \in X : (x,y) \in A\}$. By Fubini's theorem we have
\[ \int \nu_Y(N_Y(x_1) \cap N_Y(x_2)) d\nu_{X}^{\otimes 2}(x_1,x_2) = \int \nu_X(N_X(y))^2 d\nu_Y(y),\]
with both sides being equal to
\[ \int 1_{A}(x_1,y)1_A(x_2,y)d\nu_X^{\otimes 2}(x_1,x_2) d\nu_Y(y).\]
By the Cauchy-Schwarz inequality and the identity
\[ \int \nu_X(N_X(y)) d\nu_Y(y) = \iint 1_{A}(x,y) d\nu_X(x) d\nu_Y(y) = \alpha,\] we see that 
\begin{equation}\label{eq421}
\int \nu_Y(N_Y(x_1) \cap N_Y(x_2)) d\nu_X^{\otimes 2}(x_1, x_2) \geq \alpha^2.
\end{equation}
By definition of $E$ we have
\[ \iint 1_E(x_1,x_2) \nu_Y(N_Y(x_1) \cap N_Y(x_2)) d\nu_X^{\otimes 2}(x_1, x_2) \leq \textstyle\frac{1}{2}\displaystyle\eta \alpha^2.\]
Putting this together with \eqref{eq421}, we see that 
\[ \int \big(1 - \frac{1}{\eta} 1_E(x_1,x_2)\big) \nu_Y(N_Y(x_1) \cap N_Y(x_2)) d\nu_X^{\otimes 2}(x_1, x_2) \geq \textstyle\frac{1}{2}\displaystyle\alpha^2, \] or in other words (by Fubini's theorem)
\[ \iint \big(1 - \frac{1}{\eta} 1_E(x_1,x_2) \big) 1_{N_X(y)}(x_1)1_{N_X(y)}(x_2) d\nu_X^{\otimes 2}(x_1, x_2) d\nu_Y(y) \geq \textstyle\frac{1}{2}\displaystyle\alpha^2.\]
In particular, there is some specific choice of $y$ for which ($N_X(y)$ is measurable and)
\[ \int \big(1 - \frac{1}{\eta} 1_E(x_1,x_2) \big) 1_{N_X(y)}(x_1)1_{N_X(y)}(x_2)  d\nu_X^{\otimes 2}(x_1, x_2)  \geq \textstyle\frac{1}{2}\displaystyle\alpha^2.\]
For this $y$ set $X' := N_X(y)$, and we have both 
\[ \int 1_{X'}(x_1)1_{X'}(x_2) d\nu_X^{\otimes 2}(x_1, x_2) \geq \textstyle\frac{1}{2}\displaystyle\alpha^2,\] whence $\nu_X(X') \geq \frac{1}{2}\alpha$, and
\[ \int 1_{X'}(x_1)1_{X'}(x_2) d\nu_X^{\otimes 2}(x_1, x_2) \geq  \frac{1}{\eta} \int 1_E(x_1,x_2) 1_{X'}(x_1)1_{X'}(x_2) d\nu_X^{\otimes 2}(x_1, x_2),\] or in other words $\nu_X^{\otimes 2} (E \cap (X' \times X')) \leq \eta \nu_X^{\otimes 2}(X')^2$.  The result is proved.
\end{proof}

In what follows we shall be working in products $T \times (T - T)$, where $T \subset G$.  If $A \subset G \times G$ is measurable, we define
\begin{equation*}
\delta_T(A) := \int{1_{A}(t,t_1-t_2)d\mu_T(t)d\mu_T(t_1)d\mu_T(t_2)},
\end{equation*}
and
\[
\Lambda_T(A) := \int{1_A(t_1,t_4-t_5)1_A(t_2,t_4-t_5)1_A(t_3,t_2-t_1)}d\mu_T^{\otimes 5}(t_1,t_2,t_3,t_4,t_5).
\]
In this notation, Proposition \ref{prop.mainramseydriver} may be restated as follows: if $c : T \times (T - T) \rightarrow [r]$ is a colouring then there is some $i \in [r]$ for which $\Lambda_T(c^{-1}(i)) \geq r^{-O(r)}$. We shall establish this by induction on the number of colours (one of two places in our paper where we do this, the other being in the proof of Proposition \ref{main-prop-refined}). To carry  this out we need to prove a slightly stronger statement.

\begin{proposition}
Set $\eps_r := 2^{-7r+1} (r!)^{-3}$. Suppose that $G$ is a compact Abelian group, $T \subset G$ is an open set, $E \subset T \times (T-T)$ is measurable and $c : T \times (T - T) \rightarrow [r]$ is a measurable partial colouring, defined outside of a set $E$ with $\delta_T(E) \leq \eps_r$. Then there is $i \in [r]$ with $\Lambda_T(c^{-1}(i)) \geq \eps_r^2$.
\end{proposition}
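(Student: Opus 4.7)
\emph{Proof plan.} I would proceed by induction on $r$. For the base case $r=1$, the unique colour class $A_1 := c^{-1}(1)$ has $\delta_T(A_1) \geq 1 - \eps_1$; a union bound over the three indicator factors in the definition of $\Lambda_T$ gives $\Lambda_T(A_1) \geq 1 - 3\eps_1$, which exceeds $\eps_1^2 = 2^{-12}$ since $\eps_1 = 1/64$.

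For the inductive step, pigeonhole produces some colour $i$ with $\alpha_i := \delta_T(A_i) \geq (1-\eps_r)/r \geq 1/(2r)$, where $A_i := c^{-1}(i)$. Let $\nu$ be the pushforward of $\mu_T \otimes \mu_T$ under $(t_4, t_5) \mapsto t_4 - t_5$, so $\nu$ is supported on $T - T$ and $(\mu_T \times \nu)(A_i) = \delta_T(A_i) = \alpha_i$. I would apply the dependent random choice lemma (Lemma \ref{lem.gowersweakregularity}) to $A_i$ viewed inside $(T, \mu_T) \times (T-T, \nu)$ with a parameter $\eta$ of order $r^2 \eps_r$. This produces a measurable $T' \subset T$ with $\mu_T(T') \geq \alpha_i/2 \geq 1/(4r)$ such that, writing $U(t_1, t_2) := \{u : (t_1,u),(t_2,u) \in A_i\}$, the ``good pair'' set
\begin{equation*}
B := \{(t_1, t_2) \in T' \times T' : \nu(U(t_1, t_2)) \geq \tfrac{1}{2} \eta \alpha_i^2\}
\end{equation*}
satisfies $\mu_T^{\otimes 2}((T' \times T') \setminus B) \leq \eta \mu_T(T')^2$. (By inner regularity of Haar measure I may replace $T'$ by a slightly smaller open set with essentially the same measure, so as to meet the open-set hypothesis of the induction.)

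Setting $f(d) := \mu_T(\{t_3 \in T : (t_3, d) \in A_i\})$ and integrating first in $(t_4, t_5)$ and then in $t_3$ yields the key identity
\begin{equation*}
\Lambda_T(A_i) = \int \nu(U(t_1, t_2))\, f(t_2 - t_1)\, d\mu_T^{\otimes 2}(t_1, t_2).
\end{equation*}
I split into cases. \emph{Case A}: if $\int_B f(t_2 - t_1)\, d\mu_T^{\otimes 2} \geq 2 \eps_r^2/(\eta \alpha_i^2)$, the lower bound $\nu(U) \geq \eta \alpha_i^2/2$ on $B$ gives $\Lambda_T(A_i) \geq \eps_r^2$ directly. \emph{Case B}: otherwise, using $f \leq 1$ on $(T' \times T') \setminus B$, $\int_{T' \times T'} f(t_2 - t_1)\, d\mu_T^{\otimes 2} \leq 2\eps_r^2/(\eta \alpha_i^2) + \eta \mu_T(T')^2$. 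Dividing by $\mu_T(T')^3$ converts this into $\delta_{T'}(A_i \cap (T' \times (T'-T'))) = O(r^3 \eps_r)$ with $\eta \asymp r^2 \eps_r$. Together with the crude bound $\delta_{T'}(E) \leq \eps_r / \mu_T(T')^3 = O(r^3 \eps_r)$, the restriction of $c$ to $T' \times (T' - T')$ with colour $i$ declared undefined is a partial $(r-1)$-colouring whose undefined set has $\delta_{T'}$-measure at most $\eps_{r-1} = 128 r^3 \eps_r$, with the numerical constants in $\eps_r$ chosen precisely so that this threshold is met.

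The inductive hypothesis now yields some $j \neq i$ with $\Lambda_{T'}(A_j \cap (T' \times (T' - T'))) \geq \eps_{r-1}^2$. Restricting the outer integrations in $\Lambda_T(A_j)$ to $(T')^5$ produces the transfer inequality $\Lambda_T(A) \geq \mu_T(T')^5\, \Lambda_{T'}(A)$ for any $A \subset T' \times (T' - T')$, whence
$\Lambda_T(A_j) \geq \mu_T(T')^5\, \eps_{r-1}^2 \geq (1/(4r))^5 \cdot (128 r^3)^2 \eps_r^2 = 16 r \eps_r^2 \geq \eps_r^2$,
closing the induction. The main difficulty is the calibration of constants: the specific form $\eps_r = 2^{-7r+1}(r!)^{-3}$ is essentially forced by the AM--GM balance between the Case A/B threshold (which trades $\eps_r^2/\eta$ against $\eta$), the crude $O(r^3)$ loss from inverting $\mu_T(T')$ powers when passing from $\delta_T$ to $\delta_{T'}$, and the $O(r^5)$ transfer loss in the final step.
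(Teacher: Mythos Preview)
Your plan is essentially the paper's proof: induction on $r$, pigeonhole to find a popular colour, dependent random choice (Lemma~\ref{lem.gowersweakregularity}) to produce $T'$, a dichotomy into ``$A_i$ dense in $T'\times(T'-T')$'' versus ``$A_i$ sparse there'', and the transfer inequality $\Lambda_T\ge\mu_T(T')^5\Lambda_{T'}$ to close the induction. One small difference worth flagging: the paper splits \emph{directly} on $\delta_{T'}(A_i)\gtrless\tfrac12\eps_{r-1}$ and takes $\eta=\tfrac14\eps_{r-1}$, whereas you split on the good-pair integral $\int_B f$ with $\eta\asymp r^2\eps_r$. Your route is fine in principle, but the conversion from ``$\int_B f$ small'' to ``$\delta_{T'}(A_i)$ small'' costs an extra $\mu_T(T')^{-1}$ (since your $f$ integrates $t_3$ over $T$, not $T'$), and with $\eta\asymp r^2\eps_r$ the resulting constant in front of $r^3\eps_r$ comes out slightly above the $128$ needed to match $\eps_{r-1}=128r^3\eps_r$. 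The paper's direct dichotomy on $\delta_{T'}(A_i)$ sidesteps this and makes the specific value $\eps_r=2^{-7r+1}(r!)^{-3}$ work with a factor of $2r$ to spare; if you keep your formulation you would need to retune $\eta$ (to order $r^3\eps_r$) or relax the Case~A target and recoup it via the slack in the final transfer.
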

\begin{proof}
We proceed by induction on $r$, the result being vacuously true when $r =  0$ since $\eps_0 = \frac{1}{2}$ and there are no $0$-colourings of non-empty sets. Suppose we know the result for $r-1$, and that we have a partial $r$-colouring of $T \times (T - T)$ as described.

Certainly $\eps_r < \frac{1}{2}$, so by the pigeonhole principle there is some $i$ for which $\delta_T(A_i) \geq 1/2r$ where $A_1,\dots,A_r$ are the colour classes of $c$. We shall apply Lemma \ref{lem.gowersweakregularity} with $X = T$, $Y = T - T$, $A = c^{-1}(i)$ and with $\eta = \frac{1}{4}\eps_{r-1}$. Let $\mu_X$ be the probability measure induced on $T$ by the Haar probability measure $\mu_G$ and let the measure on $Y$ be given by $\mu_Y:=\mu_T \ast \mu_{-T}$. The lemma outputs a measurable set $T'$ (that is, $X'$ in the lemma) with $\mu_T(T') \geq 1/4r$ and and a measurable set $Z \subset T' \times T'$ consisting of a proportion at least $1 - \frac{1}{4}\eps_{r-1}$ of all pairs $(t_1, t_2)$ in $T' \times T'$ such that
\begin{equation}\label{882}
\int{1_{A_i}(t_1,t_4-t_5)1_{A_i}(t_2,t_4-t_5)d\mu_T(t_4)d\mu_T(t_5)}\geq \frac{\eta}{8r^2}
\end{equation}
whenever $(t_1, t_2) \in Z$.

Suppose in the first instance that $\delta_{T'}(A_i) \geq \frac{1}{2}\eps_{r-1}$.  Then
\begin{align*}
\Lambda_T(A_i) &\geq \mu_T(T')^3\int{1_{A_i}(t_3,t_2-t_1)\int{1_{A_i}(t_1,t_4-t_5)}1_{A_i}(t_2,t_4-t_5)}d\mu_T(t_4)d\mu_T(t_5)d\mu_{T'}^{\otimes 3}(t_1,t_2,t_3)\\
& \geq \frac{\eta\mu_T(T')^3}{8r^2}\int{1_{A_i}(t_3,t_2-t_1)1_Z(t_1,t_2)}d\mu_{T'}^{\otimes 3}(t_1,t_2,t_3)\\
& \geq \frac{\eta\mu_T(T')^3}{8r^2}\bigg(\int{1_{A_i}(t_3,t_2-t_1)d\mu_{T'}^{\otimes 3}(t_1,t_2,t_3)} - \textstyle\frac{1}{4}\displaystyle\eps_{r-1} \bigg)\\
& \geq \frac{\eta\mu_T(T')^3}{8r^2}\cdot \frac{\epsilon_{r-1}}{4} \geq 2^{-13} r^{-5} \eps^2_{r-1} > \eps_r^2.
\end{align*}
The alternative is that $\delta_{T'}(A_i) \leq \frac{1}{2}\eps_{r-1}$. Noting that
\[ \delta_{T'}(E) \leq \mu_T(T')^{-3} \delta_T(E) \leq 64 r^3 \eps_r = \textstyle\frac{1}{2}\displaystyle\eps_{r-1},\] it follows that $c$ restricts to give a partial colouring $c' : T' \times (T' - T') \rightarrow [r] \setminus \{i\}$, defined outside of a set $E' = (E \cup c^{-1}(i)) \cap (T' \times (T' - T'))$ with $\delta_{T'}(E') \leq \eps_{r-1}$. By our inductive hypothesis, there is $j$ such that $\Lambda_{T'}(A_j) \geq \eps_{r-1}^2$, which implies that
\[\Lambda_T(A_j) \geq \mu_T(T')^5 \Lambda_{T'}(A_j) \geq (4r)^{-5}\eps_{r-1}^2 \geq \eps_r^2.\]
This concludes the proof. 
\end{proof}

\section{The baby counting lemma and the counting lemma}\label{sec8}

The objective of this section is to prove the baby counting lemma from \S \ref{sec3} and the counting lemma from \S\ref{sec4}, the first of these acting as a kind of warm up to the second. 

The proofs require various lemmas on exponential sums with characters, and we begin by assembling these. There is a great deal of general theory on this topic; see, for example,  \cite[Chapter 11]{iwaniec-kowalski}. We develop just what we need for our application, namely Proposition \ref{add-mult} below, eschewing any temptation to seek the strongest available bounds. In fact, a bound of $o(1)$ times the trivial bound is all we need.

Shkredov \cite{shkredov} makes use of the following result from Johnsen \cite{johnsen} which he (Shkredov) records as \cite[Theorem 2.4]{shkredov}.
\begin{theorem}[{\cite[Lemma 1]{johnsen}}]\label{char-theorem}
Suppose that $\K$ is a finite field, $\chi_1,\dots,\chi_t$ are $t<|\K|$ multiplicative characters on $\K$ with $\chi_i$ non-principal for some $i$, and $h_1,\dots,h_t$ are distinct elements of $\K$.  Then
\begin{equation*}
\big|\sum_{x \in \K}{\chi_1(x+h_1)\dots\chi_t(x+h_t)}\big| \leq (t-1)\sqrt{|\K|}
\end{equation*}
\end{theorem}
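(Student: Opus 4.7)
The plan is to reduce this mixed multiplicative character sum to a single-character sum to which Weil's bound for multiplicative character sums on the affine line can be applied. First, since the characters of $\K^*$ form a cyclic group of order $q - 1$ (where $q = |\K|$), fix a generator $\chi$ and write $\chi_i = \chi^{a_i}$ with $0 \leq a_i < q-1$ for each $i$; note that $\chi_i$ is principal iff $a_i = 0$, so the hypothesis says at least one $a_i$ is nonzero.

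With this choice the sum collapses into a single character evaluation:
\[
\prod_{i=1}^t \chi_i(x + h_i) \;=\; \chi\!\left( \prod_{i=1}^t (x+h_i)^{a_i} \right) \;=\; \chi(f(x)),
\]
where $f(x) := \prod_{i=1}^t (x+h_i)^{a_i} \in \K[x]$. Since the $h_i$ are distinct, the distinct roots of $f$ in an algebraic closure are exactly those $-h_i$ with $a_i \neq 0$, and in particular there are at most $t$ of them.

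Next I would invoke the standard Weil bound for character sums on the affine line: if $\chi$ is a non-trivial character of $\K$ of order $d$ and $f \in \K[x]$ is not of the form $c \cdot g(x)^d$ for any scalar $c$ and polynomial $g$, then
\[
\Bigl| \sum_{x \in \K} \chi(f(x)) \Bigr| \leq (n-1)\sqrt{|\K|},
\]
where $n$ is the number of distinct roots of $f$ in the algebraic closure. Here $d = q - 1$, and because each distinct linear factor $(x + h_i)$ appears in $f$ with multiplicity $a_i$ satisfying $0 \leq a_i < q - 1$, the polynomial $f$ can be a $(q-1)$-th power up to a scalar only if every $a_i$ is zero, which would contradict non-triviality of some $\chi_i$. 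Hence the Weil bound applies with $n \leq t$ and gives exactly $(t-1)\sqrt{|\K|}$, as required.

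The only substantive input is Weil's bound itself, whose proof rests on the Riemann hypothesis for curves over finite fields and which I would cite from a standard reference such as \cite{iwaniec-kowalski}; this is the step I would expect to be the main obstacle if one insisted on a self-contained treatment. Everything else is a routine algebraic manipulation, the only delicate point being the need to treat the characters $\chi_i$ simultaneously despite their possibly different orders, which the common-generator trick in the first step dispatches cleanly.
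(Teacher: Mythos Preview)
The paper does not actually prove this statement: it is quoted verbatim from Johnsen's paper and used as a black box, with the remark that one could equally cite Schmidt or Iwaniec--Kowalski. Your proposed argument is the standard route to such a bound and is correct: reducing to a single generator $\chi$ of $\widehat{\K^*}$ collapses the product to $\chi(f(x))$ with $f(x)=\prod_i (x+h_i)^{a_i}$, the distinctness of the $h_i$ together with $0\leq a_i<q-1$ (not all zero) guarantees $f$ is not a $(q-1)$-th power, and Weil's bound then gives $(n-1)\sqrt{q}\leq (t-1)\sqrt{q}$ where $n$ is the number of distinct roots. This is exactly the argument one finds in the references the paper points to, so there is nothing to compare.
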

We shall also use this result but could equally take \emph{e.g.} \cite[Theorem 2C', p.43]{schmidt} or \cite[Theorem 11.23]{iwaniec-kowalski} instead.

\emph{Remark.} Note, of course, that \emph{our} multiplicative characters are extended to the whole of $\F$ in a slightly different way to usual since they are $1$ at $0$.  This can only add an error of size $t$ in the sum in Theorem \ref{char-theorem} which will be of no consequence to us.

With this in hand we are in a position to prove our key proposition.
\begin{proposition}\label{add-mult}
Suppose that $q : \F \rightarrow \F; x \mapsto ax^2 + bx$, $\chi, \chi' : \F \rightarrow \C^*$ are multiplicative characters, and $h \in \F^*$. Then we have
\[ \E_x e_p(q(x)) \chi(x) \chi'(x + h)=o_{p \rightarrow \infty}(1),\]
uniformly in $a,b,\chi,\chi'$, unless $a = b = 0$ and $\chi = \chi' = 1$.
\end{proposition}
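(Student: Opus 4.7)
The plan is to prove Proposition \ref{add-mult} by case analysis, the only real subtlety arising in the fully mixed situation.

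If $\chi = \chi' = 1$ then the sum reduces to $\E_x e_p(ax^2 + bx)$: this is $0$ when $a = 0$ (and $b \neq 0$), and a classical Gauss sum of magnitude $\sqrt{p}$ when $a \neq 0$, giving in either case an average that is $O(p^{-1/2})$. If instead $a = b = 0$ but $(\chi, \chi') \neq (1, 1)$, the sum becomes $\E_x \chi(x)\chi'(x+h)$, and Theorem \ref{char-theorem} applies directly with $t = 2$ to give $O(p^{-1/2})$, after absorbing the $O(1)$ correction coming from our convention $\chi(0) = 1$.

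The main case is when at least one of $a, b$ is nonzero and at least one of $\chi, \chi'$ is non-principal. Write $S$ for the un-normalised sum, so that we want $|S| = o(p)$. Applying Cauchy--Schwarz via the substitution $y = x + k$ in $|S|^2$ yields
\[ |S|^2 = \sum_k e_p(-ak^2 - bk)\, I(k), \quad I(k) := \sum_x e_p(-2akx)\, \chi(x)\overline{\chi(x+k)}\chi'(x+h)\overline{\chi'(x+k+h)}. \]
When $a = 0$, the additive phase inside $I(k)$ disappears and Theorem \ref{char-theorem} with $t = 4$ gives $|I(k)| \ll \sqrt{p}$ for $k \notin \{0, h, -h\}$, so $|S|^2 \ll p^{3/2}$ and $|S|/p \ll p^{-1/4}$. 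When $a \neq 0$, $I(k)$ is itself a mixed sum for $k \neq 0$, and I would apply Cauchy--Schwarz once more, substituting $x' = x + l$ in $|I(k)|^2$, to obtain
\[ |I(k)|^2 = \sum_l e_p(2akl)\, T_k(l), \]
where $T_k(l)$ is a \emph{purely} multiplicative character sum of eight factors at the shifts $\{0, k, l, k+l, h, k+h, l+h, k+l+h\}$. For all but $O(1)$ values of $l$ these eight shifts are pairwise distinct; and since the hypothesis guarantees that at least one of $\chi, \chi'$ is non-principal, Theorem \ref{char-theorem} with $t = 8$ bounds $|T_k(l)| \ll \sqrt{p}$. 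Summing gives $|I(k)|^2 \ll p^{3/2}$, so $|I(k)| \ll p^{3/4}$ for $k \neq 0$, and then $|S|^2 \leq I(0) + \sum_{k \neq 0} |I(k)| \ll p^{7/4}$, hence $|S|/p \ll p^{-1/8} = o(1)$. The exponent is far from sharp, but certainly suffices.

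The main obstacle is the mixed case with $a \neq 0$: a single squaring does not reduce matters to Theorem \ref{char-theorem}, since a nontrivial linear phase $e_p(-2akx)$ survives inside $I(k)$. Squaring a second time removes this phase but doubles the number of multiplicative factors. The bookkeeping one must do --- checking that for generic $l$ the resulting eight shifts are pairwise distinct (using $h \neq 0$ and $k \neq 0$) and that at least one of the eight individual characters is non-principal (using the hypothesis on $\chi, \chi'$) --- is routine once set up, with the collision cases contributing only $O(1)$ exceptional $l$ that are absorbed by the trivial bound $|T_k(l)| \leq p$.
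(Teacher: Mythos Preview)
Your argument is correct. The overall strategy is the same as the paper's --- difference away the additive phase $e_p(q(x))$ until only a pure product of shifted multiplicative characters remains, then invoke Theorem~\ref{char-theorem} --- but the execution differs. The paper packages the differencing as a single appeal to the Gowers--Cauchy--Schwarz inequality: writing $G(x)=e_p(-q(x))$ and $F(x)=\chi(x)\chi'(x+h)$, the identity $\prod_{\omega\in\{0,1\}^3}\mathcal{C}^{|\omega|}G(x+\omega\cdot z)=1$ (the third multiplicative derivative of a quadratic phase is trivial) gives $\big|\E_x e_p(q(x))F(x)\big|\leq\|F\|_{U^3}$, and then $\|F\|_{U^3}^8$ is bounded via Theorem~\ref{char-theorem} applied to the $16$ shifts $\{0,h\}+\{0,z_1\}+\{0,z_2\}+\{0,z_3\}$, yielding $O(p^{-1/16})$. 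You instead perform two explicit van der Corput squarings, arriving at the $8$ shifts $\{0,h\}+\{0,k\}+\{0,l\}$ and the bound $O(p^{-1/8})$.

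Your route is slightly more elementary (it avoids the Gowers-norm language entirely) and, because a quadratic phase is annihilated by two differencings rather than three, it wastes one less squaring and hence gives a better exponent. The paper's route is more uniform: it dispenses with your separate treatment of the subcases $a=0$ and $a\neq 0$, and the $U^3$ formulation makes transparent why the argument works (the polynomial degree of $q$ is what governs the number of differencings needed). One small point worth recording explicitly in your write-up: when $k=\pm h$ the eight shifts in $T_k(l)$ collapse to six for \emph{every} $l$, but the combined characters at the surviving shifts still include $\chi$ and $\chi'$ individually, so Theorem~\ref{char-theorem} continues to apply; only $O(1)$ further values of $l$ cause additional collisions.
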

\begin{proof}
Suppose first that $\chi = \chi' = 1$. Then the sum reduces to 
\[ \E_x e_p(ax^2 + bx),\] and it follows immediately from the standard Gauss sum estimate that this is bounded by $O(p^{-1/2})$ unless $a = 0$; if $a=0$ then the sum is $0$ by orthogonality of characters unless $b = 0$. Suppose, then, that we do not have $\chi = \chi' = 1$. Writing $G(x):=e_p(-q(x))$ and $F(x):=\chi(x) \chi'(x + h)$ we have
\[ \big|\E_x e_p(q(x)) \chi(x) \chi'(x + h)\big|  = \big|\E_{x,z_1,z_2,z_3}{F(x)\!\!\!\!\prod_{ \omega \in \{0,1\}^3 \setminus (0,0,0)}{\mathcal{C}^{|\omega|} G(x+\omega\cdot z)}}\big| \leq \|F\|_{U^3}
\]
by the Gowers-Cauchy-Schwarz inequality (see, for example, \cite[Equation (11.6)]{taovu}; here $\Vert \cdot \Vert_{U^3}$ is the Gowers $U^3$-norm on $\F$, discussed in many places including \cite[Chapter 11]{taovu}, and $\mathcal{C}$ is the complex conjugation operator). Thus it suffices to establish that $\Vert F \Vert_{U^3} = o(1)$, uniformly in $\chi, \chi'$ and $h$; we shall in fact establish the stronger bound $\|F\|_{U^3}=O(p^{-1/16})$. In other words, we shall prove that 
\begin{align}\label{to-prove-7}
&\E_{x, z_1, z_2, z_3} \prod_{\omega \in \{0,1\}^3} \mathcal{C}^{|\omega|} \chi(x + \omega\cdot z) \chi'(x + h + \omega\cdot z) =O(p^{-1/2}),
 \end{align}
 where $\omega\cdot z:=\omega_1z_1+\omega_2z_2+\omega_3z_3$.  Since $h \neq 0$, there are $O(p^2)$ triples $z \in \F^3$ such that there are some $\omega,\omega' \in \{0,1\}^3$ with $h+\omega\cdot z=\omega'\cdot z$.  For all other triples we may apply Theorem \ref{char-theorem} to see that
 \begin{align*}
&\E_{x} \prod_{\omega \in \{0,1\}^3} \mathcal{C}^{|\omega|} \chi(x + \omega\cdot z) \chi'(x + h + \omega\cdot z) =O(p^{-1/2}).
 \end{align*}
 Equation (\ref{to-prove-7}) follows immediately, and so does the proposition.
\end{proof}

This concludes our discussion of character sum estimates. We turn now to the proofs of the counting lemma and baby counting lemma. Before proceeding it may help the reader to recall some of the definitions from \S\ref{sec3}, particularly the definition of the trig-norm, Definition \ref{deftrig}. We begin by recalling some basic facts about duality.

\begin{lemma}\label{duality-fact}
Suppose that $\Lambda \subset \Z^d$ is a lattice of full rank. Write $G^+ := \{g \in (\R/\Z)^d: \xi \cdot g = 0 \; \mbox{for all $\xi \in \Lambda$}\}$. Suppose that $\lambda \in \Z^d$ satisfies $\lambda \cdot g = 0$ for all $g \in G^+$. Then $\lambda \in \Lambda$.
\end{lemma}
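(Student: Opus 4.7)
The plan is to use the structure theorem for lattices (Smith normal form) applied to $\Lambda \subset \Z^d$. This gives a $\Z$-basis $f_1, \ldots, f_d$ of $\Z^d$ together with positive integers $n_1 \mid n_2 \mid \cdots \mid n_d$ such that $n_1 f_1, \ldots, n_d f_d$ is a $\Z$-basis of $\Lambda$. Writing the given $\lambda$ in this basis as $\lambda = \sum_j \mu_j f_j$ with $\mu_j \in \Z$, the conclusion $\lambda \in \Lambda$ is equivalent to $n_j \mid \mu_j$ for every $j$. So the goal becomes: produce, for each index $i$, an element $g_i \in G^+$ such that $\lambda \cdot g_i$ detects the divisibility of $\mu_i$ by $n_i$.

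To build these $g_i$, I would use the $\R$-dual basis $f_1^*, \ldots, f_d^* \in \R^d$ characterised by $f_j \cdot f_k^* = \delta_{jk}$, and set $g_i := f_i^*/n_i$ regarded as an element of $(\R/\Z)^d$. The verification that $g_i \in G^+$ is then a one-line computation: for every generator $n_k f_k$ of $\Lambda$ one has $n_k f_k \cdot g_i = n_k \delta_{ki}/n_i \in \Z$, hence is $0$ in $\R/\Z$. By $\Z$-linearity the same holds for every $\xi \in \Lambda$.

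With the $g_i$'s in hand the proof finishes cleanly: the hypothesis on $\lambda$ gives $\lambda \cdot g_i = 0$ in $\R/\Z$ for every $i$, and on the other hand
\[ \lambda \cdot g_i = \tfrac{1}{n_i}\sum_j \mu_j (f_j \cdot f_i^*) = \mu_i/n_i. \]
Therefore $\mu_i/n_i \in \Z$, i.e.\ $n_i \mid \mu_i$, for each $i$, which gives $\lambda \in \Lambda$ as desired.

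The only place that needs a small amount of care is ensuring the $g_i$ are genuinely well-defined elements of $(\R/\Z)^d$ (and not just of $\R^d$) and that the pairing $\xi \cdot g$ is well-defined modulo $\Z$ for $\xi \in \Z^d$ and $g \in (\R/\Z)^d$ — both are immediate once one remembers that $f_i^*$ lies in $\R^d$ and that $\Z^d$-translates of a representative do not change the value of $\xi\cdot g \bmod \Z$. Beyond this there is no real obstacle; the content of the lemma is exactly Pontryagin duality for the finite quotient $\Z^d/\Lambda$, but Smith normal form lets one avoid invoking that machinery.
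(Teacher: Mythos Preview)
Your proof is correct and follows essentially the same dual-basis idea as the paper: the paper takes an integral basis $e_1,\dots,e_d$ of $\Lambda$, writes $\lambda = \sum \lambda_i e_i$ with $\lambda_i \in \Q$, and (arguing by contradiction) uses the $\R$-dual element $x$ with $e_i\cdot x=\delta_{1i}$ to exhibit a $g\in G^+$ with $\lambda\cdot g = \lambda_1 \notin \Z$. Your version via Smith normal form is a mild repackaging of this --- slightly more structured but not strictly needed, since any integral basis of $\Lambda$ already suffices.
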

\begin{proof}
Suppose that $\lambda \notin \Lambda$. Let $e_1,\dots, e_d$ be an integral basis for $\Lambda$. Suppose that $\lambda = \lambda_1 e_1 + \dots + \lambda_d e_d$ where, without loss of generality, $\lambda_1 \notin \Z$. By linear algebra there is some $x \in \R^d$ with $e_1 \cdot x = 1$ and $e_2 \cdot x = \dots = e_d \cdot x = 0$. In particular $\xi \cdot x \in \Z$ for $\xi \in \Lambda$, but $\lambda \cdot x = \lambda_1 \notin \Z$. Taking $g = \pi(x)$, where $\pi : \R^d \rightarrow (\R/\Z)^d$ is the natural projection, it follows that $g \in G^+$ but that $\lambda \cdot g \neq 0$ in $(\R/\Z)^d$. \end{proof}

(In fact, the full rank hypothesis is unnecessary, but it is satisfied in our applications.)
An easy consequence of this is the following.

\begin{corollary}\label{mult-cor}
Suppose that $\Lambda \subset \Z^d$ is a lattice of full rank. Write $G^{\times} := \{z \in (S^1)^d: z^{\xi} = 1 \; \mbox{for all $\xi \in \Lambda$}\}$. Suppose that $\lambda \in \Z^d$ satisfies $z^{\lambda} = 1$ for all $z \in G^{\times}$. Then $\lambda \in \Lambda$.
\end{corollary}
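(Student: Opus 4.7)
The plan is to reduce this multiplicative statement directly to the additive Lemma \ref{duality-fact} via the standard isomorphism $\R/\Z \to S^1$ given by $\theta \mapsto e^{2\pi i \theta}$. This is a topological group isomorphism, and taking the $d$-fold product gives an isomorphism $\Phi : (\R/\Z)^d \to (S^1)^d$ defined by $\Phi(\theta_1,\dots,\theta_d) = (e^{2\pi i \theta_1}, \dots, e^{2\pi i \theta_d})$.

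The key observation is that under $\Phi$, the pairing between $\xi \in \Z^d$ and elements of the target group pulls back correctly: for any $\theta \in (\R/\Z)^d$ and any $\xi \in \Z^d$, we have $\Phi(\theta)^{\xi} = e^{2\pi i (\xi \cdot \theta)}$, which equals $1$ in $S^1$ if and only if $\xi \cdot \theta = 0$ in $\R/\Z$. Applying this observation to the defining relations of $G^{\times}$, one sees that $\Phi$ restricts to an isomorphism between the additive dual group $G^+ = \{\theta \in (\R/\Z)^d : \xi \cdot \theta = 0 \text{ for all } \xi \in \Lambda\}$ of Lemma \ref{duality-fact} and the multiplicative dual group $G^{\times}$ defined here.

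Now suppose $\lambda \in \Z^d$ satisfies $z^{\lambda} = 1$ for all $z \in G^{\times}$. For any $\theta \in G^+$, we have $\Phi(\theta) \in G^{\times}$, so $\Phi(\theta)^{\lambda} = e^{2\pi i (\lambda \cdot \theta)} = 1$, which means $\lambda \cdot \theta = 0$ in $\R/\Z$. Thus $\lambda$ satisfies the hypothesis of Lemma \ref{duality-fact}, and we conclude $\lambda \in \Lambda$.

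There is no real obstacle here: the proof is a one-line translation through the exponential isomorphism, and the full-rank hypothesis is inherited from the previous lemma (though as noted in the remark, it is not actually needed).
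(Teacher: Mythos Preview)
Your proof is correct and takes essentially the same approach as the paper: both reduce to Lemma~\ref{duality-fact} via the exponential isomorphism $(\R/\Z)^d \to (S^1)^d$, with your version simply spelling out the correspondence $\Phi(\theta)^\xi = e^{2\pi i(\xi\cdot\theta)}$ that the paper leaves implicit.
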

\begin{proof}
This follows immediately from Lemma \ref{duality-fact} using the isomorphism $\pi : (\R/\Z)^d \rightarrow (S^1)^d$ defined by $\pi(\theta_1,\dots, \theta_d) = (e(\theta_1),\dots, e(\theta_d))$. 
\end{proof}

Using these facts, we may establish the following key orthogonality relations.

\begin{lemma} We have the orthogonality relations
\begin{equation}\label{orth-plus}
\int e(\xi \cdot t) d\mu_{G_{\Psi}^+}(t) = 1_{\Lambda_{\Psi}^+} \end{equation}
and
\begin{equation}\label{orth-times}
 \int v^{\xi} d\mu_{G_{\Psi}^{\times}}(v) = 1_{\Lambda_{\Psi}^{\times}}.
\end{equation}
\end{lemma}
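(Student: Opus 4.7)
The plan is to apply the standard orthogonality of characters on a compact abelian group, combined with the duality results just established (Lemma \ref{duality-fact} and Corollary \ref{mult-cor}).

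First I would treat \eqref{orth-plus}. For each $\xi \in \Z^d$ the map $\chi_{\xi} : G_{\Psi}^+ \to S^1$ defined by $\chi_{\xi}(t) := e(\xi \cdot t)$ is a continuous character of the compact abelian group $G_{\Psi}^+$. By the standard orthogonality relation for Haar measure on a compact group,
\[
\int \chi_{\xi}(t)\, d\mu_{G_{\Psi}^+}(t) = \begin{cases} 1 & \text{if } \chi_{\xi} \equiv 1 \text{ on } G_{\Psi}^+,\\ 0 & \text{otherwise}.\end{cases}
\]
So the claim reduces to showing that $\chi_{\xi}$ is the trivial character on $G_{\Psi}^+$ if and only if $\xi \in \Lambda_{\Psi}^+$. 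The ``if'' direction is immediate from the definition of $G_{\Psi}^+$, and the ``only if'' direction is precisely the content of Lemma \ref{duality-fact} (applied with $\Lambda = \Lambda_{\Psi}^+$, which has full rank since $p\Z^d \subset \Lambda_{\Psi}^+$).

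Identity \eqref{orth-times} is proved in exactly the same way, using that $v \mapsto v^{\xi}$ is a continuous character of $G_{\Psi}^{\times}$, invoking orthogonality on $G_{\Psi}^{\times}$, and then using Corollary \ref{mult-cor} (with $\Lambda = \Lambda_{\Psi}^{\times}$, again of full rank since $(p-1)\Z^d \subset \Lambda_{\Psi}^{\times}$) to conclude that a character $v \mapsto v^{\xi}$ which is trivial on $G_{\Psi}^{\times}$ must satisfy $\xi \in \Lambda_{\Psi}^{\times}$.

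There is no real obstacle here; the whole point of Lemma \ref{duality-fact} and Corollary \ref{mult-cor} is to provide the non-trivial direction of the duality, and once those are in hand the orthogonality relations are an immediate consequence of the integration of characters against Haar measure.
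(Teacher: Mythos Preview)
Your proof is correct and is essentially the same as the paper's: the paper spells out the translation-invariance argument underlying ``orthogonality of characters'' explicitly (substituting $t = t' + g$ to force $e(\xi \cdot g) = 1$ for all $g \in G_{\Psi}^+$), and then invokes Lemma \ref{duality-fact} and Corollary \ref{mult-cor} for the non-trivial direction, exactly as you do.
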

\begin{proof}
As noted in \S\ref{sec3}, $\Lambda_{\Psi}^+, \Lambda_{\Psi}^{\times}$ both have full rank. We begin with \eqref{orth-plus}. It is clear that if $\xi \in \Lambda_{\Psi}^+$ then the integral is $1$. Conversely, suppose that the integral is nonzero. Let $g \in G_{\Psi}^+$, and make the substitution $t = t' + g$, which preserves the Haar measure. We obtain
\[ \int e(\xi \cdot t) d\mu_{G_{\Psi}^+}(t) = e(\xi \cdot g) \int e(\xi \cdot t') d\mu_{G_{\Psi}^+}(t'),\] and so $e(\xi \cdot g) = 1$, which implies that $\xi \cdot g = 0$ in $(\R/\Z)^d$. Since $g$ was arbitrary, it follows from Lemma \ref{duality-fact} and the definition of $G_{\Psi}^+$ that $\xi \in \Lambda_{\Psi}^+$. The proof of \eqref{orth-times} is extremely similar, using Corollary \ref{mult-cor} in place of Lemma \ref{duality-fact}. 
\end{proof}

Now we turn to the main business of the section, beginning with the baby counting lemma, the statement of which was as follows. The proof is quite straightforward.

\begin{distribution-repeat}
Let $\Psi$ be a $\QM$-system of dimension $d$, and let $F : \G^d \rightarrow \C$ be a trigonometric polynomial. Then we have
\[ \E_{x \in \F} F(\Psi(x)) = \int F d\mu_{H_{\Psi}} + o_{p \rightarrow \infty}(\Vert F \Vert_{\trig}).\]
\end{distribution-repeat}
\begin{proof}
Suppose that $\Vert F \Vert_{\trig} = M$. Expand $F$ as \begin{equation}\label{F-fourier} F(\theta_1, \theta_2, z) = \sum_{\xi_1,\xi_2,\xi_3} \hat{F}(\xi_1, \xi_2,\xi_3) e(\xi_1 \cdot \theta_1 + \xi_2 \cdot \theta_2) z^{\xi_3},\end{equation} where $\sum |\hat{F}(\xi_1, \xi_2, \xi_3)| \leq M$. Suppose that we write $\Psi(x) = ((a_i x^2/p, 2a_i x/p, \psi_i(x))_{i = 1}^d)$, and set $a := (a_1,\dots, a_d)$. 
Then we have
\[ \E_{x \in \F} F(\Psi(x)) = \sum_{\xi_1,\xi_2, \xi_3}\hat{F}(\xi_1, \xi_2,\xi_3) \E_{x \in \F} e_p(\xi_1 \cdot a x^2 + \xi_2 \cdot 2a x) \psi^{\xi_3}(x),\]
where $\psi^{\xi_3}(x):=\prod_i{\psi_i^{(\xi_3)_i}(x)}$ which is a multiplicative character. By Proposition \ref{add-mult} (with $\chi' = 1$) the inner average is $o_{p \rightarrow \infty}(1)$ unless $\xi_1 \cdot a=0$, $\xi_2\cdot 2a=0$, and $\psi^{\xi_3}=1$.  It follows from the definitions that $\xi_1, \xi_2 \in \Lambda_{\Psi}^+$ and $\xi_3 \in \Lambda_{\Psi}^{\times}$, and in that case the inner average equals 1. Since $\sum_{\xi_1,\xi_2,\xi_3} |\hat{F}(\xi_1, \xi_2,\xi_3)| \leq M$, the contribution from those $\xi_1,\xi_2,\xi_3$ not satisfying these conditions is $o_{p \rightarrow \infty}(M)$ and hence
\[ \E_{x \in \F} F(\Psi(x)) = \sum_{\xi_1, \xi_2 \in \Lambda_{\Psi}^+, \xi_3 \in \Lambda_{\Psi}^{\times}} \hat{F}(\xi_1,\xi_2,\xi_3) + o_{p \rightarrow \infty}(M).\]
We claim that 
\begin{equation}\label{8-claim} \sum_{\xi_1, \xi_2 \in \Lambda_{\Psi}^+, \xi_3 \in \Lambda_{\Psi}^{\times}} \hat{F}(\xi_1,\xi_2,\xi_3) = \int F d\mu_{H_{\Psi}},\end{equation} which is enough to conclude the proof. To prove the claim, replace $F$ by its Fourier expansion \eqref{F-fourier} on the right hand side, and recall that $\mu_{H_{\Psi}} = \mu_{G_{\Psi}^+} \times \mu_{G_{\Psi}^+} \times \mu_{G_{\Psi}^{\times}}$. The right hand side is then
\[ \int \sum_{\xi_1,\xi_2 ,\xi_3} \hat{F}(\xi_1,\xi_2,\xi_3) e(\xi_1 \cdot \theta_1)e(\xi_2 \cdot \theta_2) z^{\xi_3} d\mu_{G_{\Psi}^+}(\theta_1) d\mu_{G_{\Psi}^+}(\theta_2) d\mu_{G_{\Psi}^{\times}}(z) .\] By the orthogonality relations \eqref{orth-plus} and \eqref{orth-times}, this is precisely the left-hand side of \eqref{8-claim}.
 \end{proof}

\emph{Remark.} We did not make any use of the fact that $\hat{F}$ was supported where $\|\xi_1\|_1,\|\xi_2\|_1,\|\xi_3\|_1 \leq M$ in this argument, but this will be important in the next argument.

We turn now to the counting lemma itself, the proof of which is related to \cite[Theorem 1.2]{shkredov}.

\begin{counting-lem-repeat}[Counting lemma]
Suppose $\Psi$ is a $d$-dimensional $\QM$-system, $F : \G^d \rightarrow \C$ is a trigonometric polynomial, and that $S \subset B(\Psi,\eps)$.  Then \begin{align*} & T(F \circ \Psi, 1_S, F \circ \Psi, F \circ \Psi) \\ & \qquad =   \mu_{\F}(S) \int F(t,u,v) F(t + u', u, v') F(t', u', v) d\mu_{H_{\Psi}}(t,u,v) d\mu_{H_{\Psi}}(t',u',v') \\ & \qquad  \qquad +   O(\epsilon \mu_{\F}(S)\Vert F \Vert_{\trig}^4) + o_{p \rightarrow \infty}(\Vert F \Vert_{\trig}^{9d}).
\end{align*}
\end{counting-lem-repeat}
\begin{proof}
As before, write $M = \Vert F \Vert_{\trig}$ and expand $F$ as
\[ F(\theta_1, \theta_2, z) = \sum_{\|\xi_1\|_1,\|\xi_2\|_1, \|\xi_3\|_1 \leq M} \hat{F}(\xi_1, \xi_2,\xi_3) e(\xi_1 \cdot \theta_1 + \xi_2 \cdot \theta_2) z^{\xi_3}\] with $\sum |\hat{F}(\xi_1,\xi_2,\xi_3)| \leq M$.
We define the function
\begin{align*}
E(y) :&= \E_{x \in \F} F(\Psi(x)) F(\Psi(x + y)) F(\Psi(xy))\\
& =  \sum_{\substack{\xi_1,\dots, \xi_9 \in \Z^d \\ \|\xi_i\|_1 \leq M}} \nonumber \hat{F}(\xi_1,\xi_2,\xi_3) \hat{F}(\xi_4,\xi_5,\xi_6) \hat{F}(\xi_7, \xi_8, \xi_9)\\
& \qquad \qquad \times \E_{x \in \F} \bigg( e_p\big(\xi_1 \cdot ax^2 + \xi_2 \cdot 2ax + \xi_4 \cdot a(x+y)^2\\
& \qquad \qquad \qquad \qquad  + \xi_5 \cdot 2a(x + y) + \xi_7 \cdot a x^2 y^2 + \xi_8 \cdot 2ax y\big) \\ & \qquad \qquad \qquad \qquad \qquad  \times \psi(x)^{\xi_3} \psi(x + y)^{\xi_6}  \psi(xy)^{\xi_9}\bigg),
\end{align*}
where we write $\psi(x)^{\xi_i}=\prod_j{\psi_j(x)^{(\xi_i)_j}}$ etc.  Now, $\psi_i(xy)=\psi_i(x)\psi_i(y)$ unless $x=0$ or $y=0$, so writing
\begin{align*}
S(y) := \E_{x \in \F} 
& e_p\big((\xi_1 + \xi_4) \cdot a x^2 +\xi_7 \cdot ax^2y^2\\
 & \qquad + (\xi_2 + \xi_5) \cdot 2a x + (\xi_4+\xi_8)\cdot 2axy\big)\\
 & \qquad \qquad \times \psi(x)^{\xi_3+\xi_9} \psi(x + y)^{\xi_6},
 \end{align*}
if $y\neq 0$ we get
 \begin{align}
\label{eqn.bg} E(y) & =  \sum_{\substack{\xi_1,\dots, \xi_9 \in \Z^d \\ \|\xi_i\|_1 \leq M}} \nonumber \hat{F}(\xi_1,\xi_2,\xi_3) \hat{F}(\xi_4,\xi_5,\xi_6) \hat{F}(\xi_7, \xi_8, \xi_9) S(y)e_p(\xi_4 \cdot ay^2 + \xi_5 \cdot 2ay) \psi(y)^{\xi_9}\\ \nonumber & \qquad \qquad \qquad  + O(p^{-1}M^3),
\end{align}
since
\begin{equation*}
\sum_{\substack{\xi_1,\dots, \xi_9 \in \Z^d \\ \|\xi_i\|_1 \leq M}} |\hat{F}(\xi_1,\xi_2,\xi_3) \hat{F}(\xi_4,\xi_5,\xi_6) \hat{F}(\xi_7, \xi_8, \xi_9)| \leq M^3.
\end{equation*}
We are only interested in $E(y)$ when $y \in B(\Psi,\eps)$. In this case, since $\|\xi_4\|_1,\|\xi_5\|_1,\|\xi_9\|_1 \leq M$ we have
\begin{align*}
|e_p  (\xi_4 & \cdot ay^2  + \xi_5 \cdot 2ay) \psi(y)^{\xi_9}-1| \\ & \leq \|\xi_4\|_1\sup_i{|e_p(a_iy^2)-1|} + \|\xi_5\|_1\sup_i{|e_p(2a_iy)-1|} + \|\xi_9\|_1\sup_i{|\psi_i(y)-1|}  = O(\eps M).
\end{align*}
It follows that for $y \in B(\Psi,\eps)\setminus \{0\}$ we have
\begin{equation}\label{e-s}
E(y)  =  \sum_{\substack{\xi_1,\dots, \xi_9 \in \Z^d \\ \|\xi_i\|_1 \leq M}} \hat{F}(\xi_1,\xi_2,\xi_3) \hat{F}(\xi_4,\xi_5,\xi_6) \hat{F}(\xi_7, \xi_8, \xi_9) S(y) +O(\eps M^4) + O(p^{-1}M^3).
\end{equation}
We may re-write $S$ as
 \[ S(y) = \E_{x \in \F} 
 e_p\bigg((\xi_1 + \xi_4 + \xi_7 \overline{y^2}) \cdot a x^2  + \big((\xi_2 + \xi_5) + (\xi_4 + \xi_8)\overline{y}\big)\cdot 2a x\bigg)   \psi(x)^{\xi_3+\xi_9} \psi(x + y)^{\xi_6},\] where $\overline{t}$ denotes the lift of $t \in \F$ to $\{0,\dots, p-1\}$. Now by Proposition \ref{add-mult}, since $y \in \F^*$, $S(y)$ is $o_{p \rightarrow \infty}(1)$ unless 
\begin{equation}\label{add-const} \xi_1 + \xi_4 + \xi_7 \overline{y^2} , \xi_2 + \xi_5 + \overline{y}(\xi_4 + \xi_8) \in \Lambda_{\Psi}^+ \end{equation} and
\begin{equation}\label{mult-const} \xi_3 + \xi_9, \xi_6 \in \Lambda_{\Psi}^{\times}.\end{equation}
The reader may again wish to take a moment and recall the definitions of $\Lambda_{\Psi}^+$ and $\Lambda_{\Psi}^\times$, given at the start of \S\ref{sec3}. It follows from this and the bound $\Vert F \Vert_{\trig} \leq M$ that for $y \in B(\Psi,\eps)\setminus \{0\}$ we have
\begin{align}
\nonumber E(y)  & = o_{p \rightarrow \infty}(M^3) + O(\eps M^4) +\\
& \qquad \qquad
\sum_{\substack{\xi_1,\dots, \xi_9 : \|\xi_i\|_1 \leq M \\ \xi_1 + \xi_4 +\xi_7\overline{y^2}, \xi_2 + \xi_5+\overline{y}(\xi_4 + \xi_8) \in \Lambda_{\Psi}^+ \\ \xi_3 + \xi_9, \xi_6 \in \Lambda_{\Psi}^{\times}}}\!\!\!\!\!\! \hat{F}(\xi_1,\xi_2,\xi_3) \hat{F}(\xi_4,\xi_5,\xi_6) \hat{F}(\xi_7, \xi_8, \xi_9).\label{eq567}
\end{align}

We say that $y$ is \emph{exceptional} if, for some tuple $(\xi_1,\dots,\xi_9)$ with $\|\xi_i\|_1 \leq M$ for all $i$, one of the following is true:
\begin{enumerate}
\item there are at most $3$ values of $z$ for which $\xi_1 + \xi_4 + \xi_7 \overline{z^2} \in \Lambda_{\Psi}^+$, and $y$ is one of those values; or
\item there are at most $2$ values of $z$ for which $\xi_2 + \xi_5 + \overline{z}(\xi_4 + \xi_8) \in \Lambda_{\Psi}^+$, and $y$ is one of those values. 
\end{enumerate}
The number of exceptional $y$ is clearly $O(M^{9d})$. Suppose that $y$ is not exceptional, and that $\xi_1,\dots,\xi_9$ lies in the support of the sum \eqref{eq567}, thus \eqref{add-const} and \eqref{mult-const} hold. Then there are at least 3 values of $z$ for which for which $\xi_1 + \xi_4 + \xi_7 \overline{z^2} \in \Lambda_{\Psi}^{+}$. Take two of these, $z_1$ and $z_2$, for which $\overline{z_1^2} \neq \overline{z_2^2}$. Then $\xi_7(\overline{z_1^2} - \overline{z_2^2}) \in \Lambda_{\Psi}^+$, which implies (from the definition of $\Lambda_{\Psi}^+$) that $\xi_7 \in \Lambda_{\Psi}^+$. Therefore, from \eqref{add-const}, $\xi_1 + \xi_4 \in \Lambda_{\Psi}^+$ as well. Furthermore there are 2 values of $z$ for which $\xi_2 +\xi_5 + \overline{z}(\xi_4 + \xi_8) \in \Lambda_{\Psi}^+$. By much the same argument it follows that $\xi_2+ \xi_5, \xi_4 + \xi_8 \in \Lambda_{\Psi}^+$. It follows that if $y$ is not exceptional then \eqref{add-const} may be replaced by the stronger conclusion that 

\begin{equation}\label{add-const-stronger} \xi_1 + \xi_4 , \xi_7 , \xi_2 + \xi_5 , \xi_4 + \xi_8 \in \Lambda_{\Psi}^+. \end{equation}

Putting all this together, if $y$ is not exceptional (or $0$) then
\[
E(y)  = o_{p \rightarrow \infty}(M^3) + O(\eps M^4) + \sum_{\substack{\xi_1,\dots, \xi_9 \\ \xi_1 + \xi_4, \xi_7,\xi_2 + \xi_5, \xi_4 + \xi_8 \in \Lambda_{\Psi}^+ \\ \xi_3 + \xi_9, \xi_6 \in \Lambda_{\Psi}^{\times}}} \hat{F}(\xi_1,\xi_2,\xi_3) \hat{F}(\xi_4,\xi_5,\xi_6) \hat{F}(\xi_7, \xi_8, \xi_9).\]
We claim that the sum on the right here is precisely

\[ I(F) := \int F(t,u,v) F(t + u', u, v') F(t', u', v) d\mu_{H_{\Psi}}(t,u,v) d\mu_{H_{\Psi}}(t', u', v') .\] 

To see this, start with this latter expression and expand each copy of $F$ as a Fourier series. Recall also that $\mu_{H_{\Psi}} = \mu_{G_{\Psi}^+} \times \mu_{G_{\Psi}^+} \times \mu_{G_{\Psi}^{\times}}$. Then we get

\begin{align*}
I(F)& =\sum_{\xi_1,\dots,\xi_9} \hat{F}(\xi_1, \xi_2,\xi_3) \hat{F}(\xi_4,\xi_5,\xi_6) \hat{F}(\xi_7,\xi_8,\xi_9)\\
& \qquad \times \int e(\xi_1 \cdot t + \xi_2 \cdot u + \xi_4 \cdot (t + u') + \xi_5 \cdot u + \xi_7 \cdot t' + \xi_8 \cdot u') d\mu_{G_{\Psi}^+}^{\otimes 4}(t,u,t',u')\\
& \qquad \times \int v^{\xi_3 + \xi_9} (v')^{\xi_6} d\mu_{G_{\Psi}^{\times}}^{\otimes 2}(v,v').
\end{align*}
The claim now follows from the orthogonality relations \eqref{orth-plus}, \eqref{orth-times}.

Thus if $y$ is not exceptional (or $0$) then
\[ E(y) = o_{p \rightarrow \infty}(M^3) + O(\eps M^4) + I(F).\]
On the other hand, whatever the value of $y$ we have $|E(y)| \leq 1$ and the measure of the exceptional elements is at most $O(M^{9d}/p)$ since there are $O(M^{9d})$ of them.  It follows that
\begin{align*}
T(F \circ \Psi, 1_S , F \circ \Psi, F \circ \Psi) & = \E_y 1_S(y) E(y) \\ & = \mu(S) (I(F)+O(\epsilon M^4)) + o_{p \rightarrow \infty}(M^{9d})
\end{align*}
and the result is proved.
\end{proof}

%%% AUTHOR: optional acknowledgments here
\section*{Acknowledgments} %%  you may comment this out if no Ackno
This work was initiated at the workshop \emph{Combinatorics meets ergodic theory}, held at the Banff International Research Station (BIRS). The authors wish to thank BIRS for providing excellent working conditions. We would like to thank Roger Heath-Brown for a useful conversation about character sums.

%%% AUTHOR:
%%% Bibliography goes here. Note that the arXiv cannot process bibtex
%%% or biber bibliographies.  Example of acceptable bibliograpy format:
\bibliographystyle{amsplain}

%% AUTHOR: You can generate such a bibliography from a .bib file by 
%% running pdflatex/bibtex/pdflatex/pdflatex and then pasting the .bbl file
%% between \begin{thebibliography} and \end{bibliography}

%%% AUTHOR: Include a short description of each author following the
%%% structure below. Use the same short tags used previously.  
%%% Use \imageat{} and \imagedot{} instead of "@" and "." in
%%% email addresses-this replaces the symbols with graphics to avoid 
%%% e-mail address harvesting from the .pdf file
\begin{dajauthors}
\begin{authorinfo}[bg]
  Ben Green\\
The Mathematical Institute, Radcliffe Observatory Quarter, Woodstock Road, Oxford OX2 6GG\\
  ben.green\imageat{}maths\imagedot{}ox\imagedot{}ac\imagedot{}uk \\
\end{authorinfo}
\begin{authorinfo}[ts]
  Tom Sanders\\
The Mathematical Institute, Radcliffe Observatory Quarter, Woodstock Road, Oxford OX2 6GG\\
  tom.sanders\imageat{}maths\imagedot{}ox\imagedot{}ac\imagedot{}uk \\
\end{authorinfo}
\end{dajauthors}

\end{document}